\newcommand{\noun}[1]{\textsc{#1}}
\providecommand{\tabularnewline}{\\}
\providecommand{\algorithmname}{Algorithm}
\theoremstyle{plain}
\newtheorem{thm}{\protect\theoremname}
  \theoremstyle{plain}
  \newtheorem{lem}[thm]{\protect\lemmaname}
  \theoremstyle{definition}
  \newtheorem{defn}[thm]{\protect\definitionname}
  \theoremstyle{plain}
  \newtheorem{cor}[thm]{\protect\corollaryname}
  \theoremstyle{remark}
  \newtheorem{claim}[thm]{\protect\claimname}
  \providecommand{\claimname}{Claim}
  \providecommand{\corollaryname}{Corollary}
  \providecommand{\definitionname}{Definition}
  \providecommand{\lemmaname}{Lemma}
\providecommand{\theoremname}{Theorem}
\begin{document}
\global\long\def\R{\mathbb{R}}

\global\long\def\C{\mathbb{C}}

\global\long\def\N{\mathbb{N}}

\global\long\def\e{{\mathbf{e}}}

\global\long\def\et#1{{\e(#1)}}

\global\long\def\ef{{\mathbf{\et{\cdot}}}}

\global\long\def\a{{\mathbf{a}}}

\global\long\def\x{{\mathbf{x}}}

\global\long\def\xt#1{{\x(#1)}}

\global\long\def\xf{{\mathbf{\xt{\cdot}}}}

\global\long\def\d{{\mathbf{d}}}

\global\long\def\w{{\mathbf{w}}}

\global\long\def\b{{\mathbf{b}}}

\global\long\def\u{{\mathbf{u}}}

\global\long\def\y{{\mathbf{y}}}

\global\long\def\k{{\mathbf{k}}}

\global\long\def\yt#1{{\y(#1)}}

\global\long\def\yf{{\mathbf{\yt{\cdot}}}}

\global\long\def\z{{\mathbf{z}}}

\global\long\def\v{{\mathbf{v}}}

\global\long\def\h{{\mathbf{h}}}

\global\long\def\s{{\mathbf{s}}}

\global\long\def\c{{\mathbf{c}}}

\global\long\def\p{{\mathbf{p}}}

\global\long\def\f{{\mathbf{f}}}

\global\long\def\t{{\mathbf{t}}}

\global\long\def\rb{{\mathbf{r}}}

\global\long\def\rt#1{{\rb(#1)}}

\global\long\def\rf{{\mathbf{\rt{\cdot}}}}

\global\long\def\mat#1{{\ensuremath{\bm{\mathrm{#1}}}}}

\global\long\def\matN{\ensuremath{{\bm{\mathrm{N}}}}}

\global\long\def\matX{\ensuremath{{\bm{\mathrm{X}}}}}

\global\long\def\matK{\ensuremath{{\bm{\mathrm{K}}}}}

\global\long\def\matA{\ensuremath{{\bm{\mathrm{A}}}}}

\global\long\def\matB{\ensuremath{{\bm{\mathrm{B}}}}}

\global\long\def\matC{\ensuremath{{\bm{\mathrm{C}}}}}

\global\long\def\matD{\ensuremath{{\bm{\mathrm{D}}}}}

\global\long\def\matO{\ensuremath{{\bm{\mathrm{O}}}}}

\global\long\def\matQ{\ensuremath{{\bm{\mathrm{Q}}}}}

\global\long\def\matP{\ensuremath{{\bm{\mathrm{P}}}}}

\global\long\def\matU{\ensuremath{{\bm{\mathrm{U}}}}}

\global\long\def\matV{\ensuremath{{\bm{\mathrm{V}}}}}

\global\long\def\matM{\ensuremath{{\bm{\mathrm{M}}}}}

\global\long\def\matG{\ensuremath{{\bm{\mathrm{G}}}}}

\global\long\def\calH{{\cal H}}

\global\long\def\calS{{\cal S}}

\global\long\def\calT{{\cal T}}

\global\long\def\matR{\mat R}

\global\long\def\matS{\mat S}

\global\long\def\matO{\mat O}

\global\long\def\matT{\mat T}

\global\long\def\matY{\mat Y}

\global\long\def\matI{\mat I}

\global\long\def\matJ{\mat J}

\global\long\def\matZ{\mat Z}

\global\long\def\matW{\mat W}

\global\long\def\tmatK{\widetilde{\matK}}

\global\long\def\matL{\mat L}

\global\long\def\S#1{{\mathbb{S}_{N}[#1]}}

\global\long\def\IS#1{{\mathbb{S}_{N}^{-1}[#1]}}

\global\long\def\PN{\mathbb{P}_{N}}

\global\long\def\ONorm#1{\|#1\|_{op}}

\global\long\def\ONormS#1{\|#1\|_{op}^{2}}

\global\long\def\TNormS#1{\|#1\|_{2}^{2}}

\global\long\def\TNorm#1{\|#1\|_{2}}

\global\long\def\InfNorm#1{\|#1\|_{\infty}}

\global\long\def\InfNormS#1{\|#1\|_{\infty}^{2}}

\global\long\def\FNorm#1{\|#1\|_{F}}

\global\long\def\FNormS#1{\|#1\|_{F}^{2}}

\global\long\def\UNorm#1{\|#1\|_{\matU}}

\global\long\def\NucNorm#1{\|#1\|_{\star}}

\global\long\def\UNormS#1{\|#1\|_{\matU}^{2}}

\global\long\def\UINormS#1{\|#1\|_{\matU^{-1}}^{2}}

\global\long\def\ANorm#1{\|#1\|_{\matA}}

\global\long\def\BNorm#1{\|#1\|_{\mat B}}

\global\long\def\ANormS#1{\|#1\|_{\matA}^{2}}

\global\long\def\AINormS#1{\|#1\|_{\matA^{-1}}^{2}}

\global\long\def\T{\textsc{T}}

\global\long\def\conj{\textsc{*}}

\global\long\def\pinv{\textsc{+}}

\global\long\def\Var#1{{\mathbb{V}}\left[#1\right]}

\global\long\def\Expect#1{{\mathbb{E}}\left[#1\right]}

\global\long\def\ExpectC#1#2{{\mathbb{E}}_{#1}\left[#2\right]}

\global\long\def\dotprod#1#2#3{(#1,#2)_{#3}}

\global\long\def\dotprodN#1#2{(#1,#2)_{{\cal N}}}

\global\long\def\dotprodH#1#2{(#1,#2)_{{\cal {\cal H}}}}

\global\long\def\dotprodsqr#1#2#3{(#1,#2)_{#3}^{2}}

\global\long\def\Trace#1{{\bf Tr}\left(#1\right)}

\global\long\def\nnz#1{{\bf nnz}\left(#1\right)}

\global\long\def\rank#1{{\bf rank}\left(#1\right)}

\global\long\def\vol#1{{\bf vol}\left(#1\right)}

\global\long\def\range#1{{\bf range}\left(#1\right)}

\global\long\def\sr#1{{\bf sr}\left(#1\right)}

\global\long\def\poly#1{{\bf poly}\left(#1\right)}

\global\long\def\gap#1#2{{\bf gap}_{#2}\left(#1\right)}

\global\long\def\gapS#1#2{{\bf gap}_{#2}^{2}\left(#1\right)}

\global\long\def\diag#1{{\bf diag}\left(#1\right)}

\global\long\def\Gr#1#2{{\bf Gr}(#1,#2)}

\title{Sketching for Principal Component Regression}

\author{Liron Mor-Yosef \\
 Tel Aviv University \\
 lironmo2@mail.tau.ac.il\\
 \and Haim Avron \\
 Tel Aviv University \\
haimav@post.tau.ac.il \\
}
\maketitle
\begin{abstract}
Principal component regression (PCR) is a useful method for regularizing
least squares approximations. Although conceptually simple, straightforward
implementations of PCR have high computational costs and so are inappropriate
for large scale problems. In this paper, we propose efficient algorithms
for computing approximate PCR solutions that are, on one hand, high
quality approximations to the true PCR solutions (when viewed as minimizer
of a constrained optimization problem), and on the other hand entertain
rigorous risk bounds (when viewed as statistical estimators). In particular,
we propose an input sparsity time algorithms for approximate PCR.
We also consider computing an approximate PCR in the streaming model,
and kernel PCR. Empirical results demonstrate the excellent performance
of our proposed methods.
\end{abstract}

\section{Introduction}

Least squares approximations of the form 
\[
\min_{\x\in\R^{d}}\TNorm{\matA\x-\b}
\]
are fundamental building blocks in computational science and statistical
data analysis, with applications ranging from statistical data analysis
to inverse problems. However, it is well appreciated, especially in
the aforementioned application areas, that regularization is often
the key to achieving the best results.

One of the basic methods for regularizing least squares approximations
is principal component regression (PCR)~\cite{hotelling1933analysis,kendall1957course,artemiou2009principal}.
Given a data matrix $\matA$, a right hand side $\b$ and a target
rank $k$, PCR is computed by first computing the coefficients $\matV_{\matA,k}$
corresponding to the top $k$ principal components of $\matA$ (i.e.,
to dominant right invariant subspace of $\matA$), then regressing
on $\matA\matV_{\matA,k}$ and $\b$, and finally projecting the solution
back to the original space. In short, the PCR estimator is $\x_{k}=\matV_{\matA,k}(\matA\matV_{\matA,k})^{\pinv}\b$
and regularization is achieved via PCA based dimensionality reduction.
While there is some criticism of PCR in the statistical literature~\cite{artemiou2009principal,jolliffe1982note},
it is nevertheless a valuable tool in the toolbox of practitioners. 

Up until recent breakthroughs on fast methods for least squares approximations,
there was little penalty in terms of computational complexity when
switching from ordinary least squares (OLS) to PCR. Indeed, the complexity
of SVD based computation of the dominant invariant subspace is $O(nd\min(n,d))$,
and this matches the asymptotic complexity of straightforward computation
of the OLS solution (i.e., via direct methods). However, recent progress
on fast sketching based algorithms for linear regression~\cite{DMMS11,RT08,MSM14,CW17,woodruff2014sketching}
has created a gap: exact computation of the principal components still
requires SVD so the overall complexity is still $O(nd\min(n,d))$,
even though the OLS stage is faster. The gap is not insubstantial:
when learning with large scale data (either large $n$, or large $d$),
$O(nd\min(n,d))$ is often infeasible, but modern sketching based
linear regression methods are. 

\subsection{Contributions}

In this paper, we study the use of dimensionality reduction prior
to computing PCR (so we can compute PCR on a smaller input matrix).
In particular, for a data matrix $\matA$, we relate the PCR solution
of $\matA\matR$, where $\matR$ is any dimensionality reduction matrix,
to the PCR solution of $\matA$. To do so, we study the notion of
approximate PCR both from an optimization perspective and from a statistical
perspective, and provide conditions on $\matR$ that guarantee that
after projecting the solution back to the full space (by multiplying
by $\matR^{\T}$) we have an approximate PCR solution with rigorous
statistical risk bounds. These results are described in Section~\ref{sec:dim-reduce-pcr}. 

We then leverage the aforementioned results to design fast, sketching
based, algorithms for approximate PCR. We propose algorithms specialized
for the several cases (in the following, $n$ is number of data points,
$d$ is dimension of the data): large $n$ (using left sketching),
large $d$ (using right sketching), and both $n$ and $d$ large (using
two-sided sketching). Furthermore, we propose an input-sparsity time
algorithm for approximate PCR. These results are described in Section~\ref{sec:alg}.

We also consider computing approximate PCR in the streaming model,
providing the first algorithm for computing approximate PCR in a stream.
We also provide a fast algorithm for approximate Kernel PCR (polynomial
kernel only). These results are described in Section~\ref{sec:extensions}.

Finally, empirical results (Section \ref{sec:experiments}) clearly
demonstrate the ability of our proposed algorithms to compute approximate
PCR solution, the correctness of our theoretical analysis, and the
advantages of using our techniques instead of simpler techniques like
compressed least squares. 

In general, unlike previous works on randomized methods for PCR (which
we discuss in the next subsection), we analyze the use of sketching
for PCR from a sketch-and-solve approach. We discuss the various advantages
and disadvantages of the sketch-and-solve approach in comparison to
iterative based approaches, in the next subsection.

\subsection{Related Work}

Recently matrix sketching, such as the use of random projections,
has emerged as a powerful technique for accelerating and scaling many
important statistical learning techniques. See recent surveys by Woodruff~\cite{woodruff2014sketching}
and Mahoney et al.~\cite{YMM15} for an extensive exposition on this
subject. So far, there has been limited research on the use of matrix
sketching in the context of principal component regression. 

One natural strategy for leveraging sketching in the context of PCR
is to use approximate principal components. Approximate principal
components can be computed using fast sketching based algorithm for
approximate PCA (also known as 'randomized SVD')~\cite{halko2011finding,woodruff2014sketching}.
This was recently explored by Boutsidis and Magdon-Ismail~\cite{BM14}.
The authors show that the if the number of subspace iterations is
sufficiently large, one obtain a bound on the sub-optimality of the
approximate solution and on the error of the solution vector. We too
bound the sub-optimality of our solutions, but instead of bounding
the error of the solution vector, we bound their distance to the right
dominant subspace, or bound the distance of the projection to the
left dominant subspace. 

Frostig et al. leverage fast randomized algorithms for ridge regression
to design iterative algorithms for principal component regression
and principal component projection~\cite{frostig2016principal}.
Forstig et al.'s results were later improved by Allen-Zhu and Li~\cite{AL17}.
Both of the aforementioned methods use iterations, while our work
explores the use of a sketch-and-solve approach. While it is true
that better accuracies can be achieved using iterative methods with
sketching based accelerators~\cite{RT08,AMT10,MSM14,GOS16,ACW17},
there are some advantages in using a sketch-and-solve approach. In
particular, sketch-and-solve algorithms are typically faster. However
this comes at the cost: sketch-and-solve algorithms typically provide
cruder approximations. Nevertheless, it is not uncommon for these
cruder approximations to be sufficient in machine learning applications.
Another advantage of the sketch-and-solve approach is that it is more
amenable to streaming and kernelization; we consider both in this
paper. 

Closely related to our work is recent work on Compressed Least Squares
(CLS)~\cite{maillard2009compressed,Kaban14,slawski2017compressed,Slawski17,THM17}.
In particular, our statistical analysis (section~\ref{subsec:statistical})
is inspired by recent statistical analysis of CLS~\cite{slawski2017compressed,Slawski17,Kaban14}.
Additionally, CLS is sometimes considered as a computationally attractive
alternative to PCR~\cite{Slawski17,THM17}. While CLS certainly uses
matrix sketching to compress the matrix, it also uses the compression
to regularize the problem. The mix between compression for scalability
and compression for regularization reduces the ability to fine tune
the method to the needs at hand, and thus obtain the best possible
results. In contrast, our methods uses sketching primarily to approximate
the principal components and as such serves as a means for scalability
only. We propose methods that are computationally as attractive as
CLS, and are more faithful to the behavior of PCR (in fact, CLS is
a special case of one of our proposed algorithms). These advantages
over CLS are also evident in the experimental results reported in
Section~\ref{sec:experiments}. 

Principal component regression is a form of least squares regression
with convex constraints (once the dominant subspace has been found).
Pilanchi and Wainwright recently explored the effect of regularization
on the sketch size for least squares regression~\cite{pilanci2015randomized,pilanci2016iterative}.
In the aforementioned papers, sketching is applied only to the objective,
while the constraint is enforced exactly. This is unsatisfactory in
the context of PCR since for PCR the constraints are gleaned from
the input, and enforcing them is as expensive as solving the problem
exactly. In contrast, our method uses sketching not only to compress
the objective function, but also to approximate the constraint set. 

Ridge regression (also known as Tikhonov regularization) is another
popular and well studied method for regularizing least squares solutions.
It also closely related to PCR in the sense that the ridge term can
be viewed as a soft damping of the singular values. Recently several
sketching-based algorithms have been suggested to accelerate the solution
of ridge regression~\cite{ChenEtAl15,ACW17b,WGM17,CYD18}.

\section{Preliminaries}

\subsection{Notation and Basic Definitions}

We denote scalars using Greek letters or using $x,y,\dots$. Vectors
are denoted by $\x,\y,\dots$ and matrices by $\matA,\mat B,\dots$.
The $s\times s$ identity matrix is denoted $\matI_{s}$. We use the
convention that vectors are column-vectors. $\nnz{\matA}$ denotes
the number of non-zeros in $\matA$. The notation $\alpha=(1\pm\gamma)\beta$
means that $(1-\gamma)\beta\leq\alpha\leq(1+\gamma)\beta$, and the
notation $\alpha=\beta\pm\gamma$ means that $|\alpha-\beta|\leq\gamma$. 

Given a matrix $\matX\in\R^{m\times n}$, let $\matX=\matU_{\matX}\Sigma_{\matX}\matV_{\matX}^{\T}$
be a \emph{thin SVD} of $\matX$, i.e. $\matU_{\matX}\in\R^{m\times\min(m,n)}$
is a matrix with orthonormal columns, $\Sigma_{\matX}\in\R^{\min(m,n)\times\min(m,n)}$
is a diagonal matrix with the non-negative singular values on the
diagonal, and $\matV_{\matX}\in\R^{n\times\min(m,n)}$ is a matrix
with orthonormal columns. The thin SVD decomposition is not necessarily
unique, so when we use this notation we mean that the statement is
correct for any such decomposition. A thin SVD decomposition can be
computed in $O(mn\min(m,n))$. We denote the singular values of $\matX$
by $\sigma_{\max}(\matX)=\sigma_{1}(\matX)\geq\dots\geq\sigma_{\min(m,n)}(\matX)=\sigma_{\min}(\matX)$,
omitting the matrix from the notation if the relevant matrix is clear
from the context. For $k\leq\min(m,n)$, we use $\matU_{\matX,k}$
(respectively $\matV_{\matX,k})$ to denote the matrix consisting
of the first $k$ columns of $\matU_{\matX}$ (respectively $\matV_{\matX}$),
and use $\Sigma_{\matX,k}$ to denote the leading $k\times k$ minor
of $\Sigma_{\matX}$. We use $\matU_{\matX,k+}$ (respectively $\matV_{\matX,k+})$
to denote the matrix consisting of the last $\min(m,n)-k$ columns
of $\matU_{\matX}$ (respectively $\matV_{\matX}$), and use $\Sigma_{\matX,k+}$
to denote the lower-right $(\min(m,n)-k)\times(\min(m,n)-k)$ block
of $\Sigma_{\matX}$. In other words, 
\[
\matU_{\matX}=\left[\begin{array}{cc}
\matU_{\matX,k} & \matU_{\matX,k+}\end{array}\right]\quad\Sigma_{\matX}=\left[\begin{array}{cc}
\Sigma_{\matX,k} & 0\\
0 & \Sigma_{\matX,k+}
\end{array}\right]\quad\matV_{\matX}=\left[\begin{array}{cc}
\matV_{\matX,k} & \matV_{\matX,k+}\end{array}\right].
\]
The \emph{Moore-Penrose pseudo-inverse} of $\matX$ is $\matX^{\pinv}\coloneqq\matV_{\matX}\Sigma_{\matX}^{\pinv}\matU_{\matX}^{\T}$
where $\Sigma_{\matX}^{\pinv}=\diag{\sigma_{1}(\matX)^{\pinv},\dots,\sigma_{\min(m,n)}(\matX)^{\pinv}}$
with $a^{+}=a^{-1}$ when $a\neq0$ and $0$ otherwise. 

The \emph{stable rank }of a matrix $\matX$ is $\sr{\matX}\coloneqq\FNormS{\matX}/\TNormS{\matX}$.
The $k$-th \emph{relative gap} of a matrix $\matX$ is 
\[
\gap{\matX}k=\frac{\sigma_{k}^{2}-\sigma_{k+1}^{2}}{\sigma_{1}^{2}}\,.
\]

For a subspace ${\cal U}$, we use $\matP_{{\cal U}}$ to denote the
\emph{orthogonal projection matrix} onto ${\cal U}$, and $\matP_{\matX}$
for the projection matrix on the column space of $\matX$ (i.e. $\matP_{\matX}=\matP_{\range{\matX}}$).
We have $\matP_{\matX}=\matX\matX^{\pinv}$. The \emph{complementary
projection matrix }is $\matP_{\matX}^{\perp}=\matI-\matP_{\matX}$.
A useful property of projection matrices is that if ${\cal S}\subseteq\calT$
then $\matP_{\calS}\matP_{\calT}=\matP_{\calT}\matP_{{\cal S}}=\matP_{\calS}$.
Furthermore, we note the following result.
\begin{thm}
[Theorem 2.3 in \cite{Stewart_perturbation}]\label{thm:projection_equality}
For any $\matA$ and $\matB$ with the same number of rows, the following
statements hold:
\begin{enumerate}
\item If $\rank{\matA}=\rank{\matB}$, then the singular values of $\matP_{\matA}\matP_{\matB}^{\perp}$
and $\matP_{\matB}\matP_{\matA}^{\perp}$ are the same, so
\[
\TNorm{\matP_{\matA}\matP_{\matB}^{\perp}}=\TNorm{\matP_{\matB}\matP_{\matA}^{\perp}}
\]
\item Moreover the nonzero singular values $\sigma$ of $\matP_{\matA}\matP_{\matB}^{\perp}$
correspond to pairs $\pm\sigma$ of eigenvalues of $\matP_{\matB}-\matP_{\matA}$,
so 
\[
\TNorm{\matP_{\matB}-\matP_{\matA}}=\TNorm{\matP_{\matA}\matP_{\matB}^{\perp}}
\]
\item If $\TNorm{\matP_{\matB}-\matP_{\matA}}<1$, then $\rank{\matA}=\rank{\matB}$.
\end{enumerate}
\end{thm}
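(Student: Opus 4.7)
The plan is to exploit a principal-angle (CS) decomposition of the pair $(\matP_{\matA},\matP_{\matB})$ that puts both projections into a canonical block-diagonal form, after which all three claims become explicit $2\times 2$ computations.

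To set this up, I would diagonalize the symmetric operator $\matP_{\matA}\matP_{\matB}\matP_{\matA}$ restricted to $\range{\matA}$; its eigenvalues lie in $[0,1]$ and I would write them as $\cos^{2}\theta_{1},\ldots,\cos^{2}\theta_{r}$ where $r=\rank{\matA}$. Lifting the eigenvectors through $\matP_{\matB}$, the space $\R^{n}$ decomposes as an orthogonal sum of: 1-dimensional blocks inside $\range{\matA}\cap\range{\matB}$ (both projections act as $1$); 1-dimensional blocks inside $\range{\matA}\cap\range{\matB}^{\perp}$ ($\matP_{\matA}=1$, $\matP_{\matB}=0$); symmetrically, 1-dimensional blocks in $\range{\matA}^{\perp}\cap\range{\matB}$ and $\range{\matA}^{\perp}\cap\range{\matB}^{\perp}$; and a 2-dimensional block for each angle $\theta_{i}\in(0,\pi/2)$ on which, writing $c=\cos\theta_{i}$ and $s=\sin\theta_{i}$,
\[
\matP_{\matA}=\begin{pmatrix}1 & 0\\ 0 & 0\end{pmatrix},\qquad \matP_{\matB}=\begin{pmatrix}c^{2} & cs\\ cs & s^{2}\end{pmatrix}.
\]

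Given this canonical form, parts (1) and (2) would follow block by block. Only the blocks in $\range{\matA}\cap\range{\matB}^{\perp}$, $\range{\matA}^{\perp}\cap\range{\matB}$, and the 2D angle blocks contribute nontrivially to $\matP_{\matA}\matP_{\matB}^{\perp}$, $\matP_{\matB}\matP_{\matA}^{\perp}$, or $\matP_{\matB}-\matP_{\matA}$. The equal-rank hypothesis forces $\dim(\range{\matA}\cap\range{\matB}^{\perp})=\dim(\range{\matA}^{\perp}\cap\range{\matB})$, which lines up the ``singular value $1$'' contributions on both sides as well as the $\pm 1$ eigenvalue pairs of $\matP_{\matB}-\matP_{\matA}$ coming from those blocks. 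In each 2D angle block, a direct calculation gives nonzero singular value $s_{i}$ for both $\matP_{\matA}\matP_{\matB}^{\perp}$ and $\matP_{\matB}\matP_{\matA}^{\perp}$, while $\matP_{\matB}-\matP_{\matA}$ has trace $0$ and determinant $-s_{i}^{2}$, hence eigenvalues $\pm s_{i}$. For part (3) I would argue contrapositively by dimension counting: if $\rank{\matB}>\rank{\matA}$, then $\dim\range{\matB}+\dim\range{\matA}^{\perp}>n$, so $\range{\matB}\cap\range{\matA}^{\perp}$ contains a unit vector $\v$ on which $(\matP_{\matB}-\matP_{\matA})\v=\v$, giving $\TNorm{\matP_{\matB}-\matP_{\matA}}\geq 1$.

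The main obstacle is setting up the canonical form cleanly in the presence of degenerate principal angles, i.e.\ eigenvalues of $\matP_{\matA}\matP_{\matB}\matP_{\matA}$ equal to $0$ or $1$, where the lifting that produces the 2D blocks breaks down. I would handle this bookkeeping issue by first peeling off the four intersection subspaces $\range{\matA}\cap\range{\matB}$, $\range{\matA}\cap\range{\matB}^{\perp}$, $\range{\matA}^{\perp}\cap\range{\matB}$, and $\range{\matA}^{\perp}\cap\range{\matB}^{\perp}$, and then applying the 2D lifting only on the residual subspace, where $\matP_{\matA}\matP_{\matB}\matP_{\matA}$ has all eigenvalues strictly between $0$ and $1$.
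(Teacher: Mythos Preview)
Your proposal is correct. The paper does not actually prove this theorem; it simply cites it as Theorem~2.3 from Stewart's perturbation-theory survey and uses it as a black box. So there is no ``paper's own proof'' to compare against here.

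That said, your approach via the CS / principal-angle decomposition is exactly the standard route (and is essentially how Stewart proves it in the cited reference). The block computations you sketch are right: on each $2\times 2$ angle block with $c=\cos\theta_i$, $s=\sin\theta_i$, both $\matP_{\matA}\matP_{\matB}^{\perp}$ and $\matP_{\matB}\matP_{\matA}^{\perp}$ have nonzero singular value $s$, while $\matP_{\matB}-\matP_{\matA}$ has eigenvalues $\pm s$; the equal-rank hypothesis matches up the degenerate $\theta=\pi/2$ blocks (contributing singular value $1$ on each side and eigenvalues $\pm 1$ in pairs); and the contrapositive dimension count for part~(3) is clean. Your handling of the degenerate angles by first peeling off the four intersection subspaces before lifting is the right bookkeeping move. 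One small addition worth making explicit: in part~(3) you should also cover the case $\rank{\matA}>\rank{\matB}$ by the symmetric argument, producing a unit vector in $\range{\matA}\cap\range{\matB}^{\perp}$ on which $(\matP_{\matB}-\matP_{\matA})\v=-\v$.
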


\subsection{Principal Component Regression and Principal Component Projection }

In the \emph{Principal Component Regression (PCR)} problem, we are
given an input $n\textrm{-by-}d$ data matrix $\matA$, a right hand
side $\b\in\R^{n}$, and a rank parameter $k$ which is smaller or
equal to the rank of $\matA$. Furthermore, we assume that there is
an non-zero eigengap at $k$: $\sigma_{k}>\sigma_{k+1}$. The goal
is to find the PCR solution, $\x_{k}$, defined as 
\begin{equation}
\x_{k}\coloneqq\arg\min_{\x\in\range{\matV_{\matA,k}}}\TNorm{\matA\x-\b}.\label{eq:pcr-opt-problem}
\end{equation}
It is easy to verify that $\x_{k}=\matV_{\matA,k}(\matA\matV_{\matA,k})^{\pinv}\b=\matV_{\matA,k}\Sigma_{\matA,k}^{-1}\matU_{\matA,k}^{\T}\b$.
The \emph{Principal Component Projection (PCP) }of $\b$ is $\b_{k}\coloneqq\matA\x_{k}=\matP_{\matU_{\matA,k}}\b$. 

Straightforward computation of $\x_{k}$ and $\b_{k}$ via the SVD
takes $O(nd\min(n,d))$ operations\footnote{The complexity when using iterative algorithms (e.g. Lanczos) to compute
only the dominant invariant spaces depend on several additional facts
and in particular on spectral properties of the matrix and sparsity
level. Thus, to avoid overly complicating the discussion on computational
complexity, we refrain from further discussion of iterative methods
for computing dominant eigenspaces}. We are primarily interested in finding faster algorithms that compute
an approximate PCR or PCP solution (we formalize the terms 'approximate
PCP/PCR' in Section~\ref{sec:dim-reduce-pcr}). Throughout the paper,
we use $\matA,\b,$ and $k$ as the arguments of the PCR/PCP problem
to be solved.

\subsection{\label{subsec:Matrix-Perturbations}Matrix Perturbations and Distance
Between Subspaces}

Our analysis uses matrix perturbation theory extensively. We now describe
the basics of this theory and the results we use. 

The \emph{principal angles} $\theta_{j}\in[0,\pi/2]$ between two
subspaces ${\cal U}$ and ${\cal W}$ are recursively defined by the
identity
\[
\cos(\theta_{j})=\max_{\u\in{\cal U}}\max_{\w\in{\cal W}}\u^{\T}\w\,\text{s.t.}\,\TNorm{\u}=1,\TNorm{\w}=1,\forall i<j.\u_{i}^{\T}\u=0,\w_{i}^{\T}\w=0\,.
\]
We use $\u_{j}$ and $\w_{j}$ to denote the vectors for which $\cos(\theta_{j})=\u_{j}^{\T}\w_{j}$.
Let $\Theta({\cal {\cal U}},{\cal W})$ denote the $d\times d$ diagonal
matrix whose $j$th diagonal entry is the $j$th principal angle,
and as usual we allow writing matrices instead of subspaces as short-hand
for the column space of the matrix. Henceforth, when we write a function
on $\Theta(\cdot,\cdot)$, i.e. $\sin(\Theta(\matU,\text{\ensuremath{\matW}}$)),
we mean evaluating the function entrywise on the diagonal only. It
is well known~ \cite[section 6.4.3]{golub2012matrix} that if $\matU$
(respectively $\matW$) is a matrix with orthonormal columns whose
column space is equal to ${\cal U}$ (respectively ${\cal W}$) then
\[
\sigma_{j}(\matU^{\T}\matW)=\cos(\theta_{j}).
\]
The following lemma connects the tangent of the principal angles to
the spectral norm of an appropriate matrix.
\begin{lem}
[Lemma 4.3 in \cite{drineas2016structural}] \label{lem:tan-to-spectral}Let
$\matQ\in\R^{n\times s}$ have orthonormal columns, and let $\matW=(\begin{array}{cc}
\matW_{k} & \matW_{k+}\end{array})\in\R^{n\times n}$ be an orthogonal matrix where $\matW_{k}\in\R^{n\times k}$ with
$k\leq s$. If $\rank{\matW_{k}^{\T}\matQ}=k$ then 
\[
\TNorm{\tan\Theta(\matQ,\matW_{k})}=\TNorm{(\matW_{k+}^{\T}\matQ)(\matW_{k}^{\T}\matQ)}\,.
\]
\end{lem}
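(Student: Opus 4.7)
The plan is to reduce everything to a block identity coming from the orthogonality of $\matW$, then read off the tangents via an SVD. Write $\matC \coloneqq \matW_k^\T \matQ \in \R^{k\times s}$ and $\matS \coloneqq \matW_{k+}^\T \matQ \in \R^{(n-k)\times s}$. Because $\matW \matW^\T = \matI_n$, we also have $\matW^\T \matW = \matI_n$, so stacking and using $\matQ^\T \matQ = \matI_s$ gives
\[
\matC^\T \matC + \matS^\T \matS \;=\; \matQ^\T \matW \matW^\T \matQ \;=\; \matI_s.
\]
This identity is what I would use to convert the claim about tangents into an algebraic statement about $\matS$ and $\matC^{\pinv}$ (the intended right-hand side in the lemma, assuming the displayed formula is missing a pseudo-inverse symbol).

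Next I would bring in the SVD $\matC = \matU_C \Sigma_C \matV_C^\T$, with $\matU_C \in \R^{k\times k}$ orthogonal, $\Sigma_C \in \R^{k\times k}$ diagonal, and $\matV_C \in \R^{s\times k}$ having orthonormal columns. The rank assumption $\rank{\matC} = k$ means $\Sigma_C$ is invertible, so $\matC^{\pinv} = \matV_C \Sigma_C^{-1} \matU_C^\T$ and $\matC \matC^{\pinv} = \matI_k$. Since $\matQ$ and $\matW_k$ both have orthonormal columns, the standard characterization $\sigma_j(\matW_k^\T \matQ) = \cos\theta_j$ identifies the diagonal entries of $\Sigma_C$ with the cosines of the principal angles $\Theta(\matQ, \matW_k)$.

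Now I would compute $(\matS \matC^{\pinv})^\T (\matS \matC^{\pinv})$. Right-multiplying $\matS$ by $\matC^{\pinv} = \matV_C \Sigma_C^{-1} \matU_C^\T$ and using orthogonality of $\matU_C$ to strip it off under the spectral norm, it suffices to examine
\[
\Sigma_C^{-1} \matV_C^\T \matS^\T \matS \matV_C \Sigma_C^{-1}.
\]
Substituting $\matS^\T \matS = \matI_s - \matC^\T \matC = \matI_s - \matV_C \Sigma_C^2 \matV_C^\T$ and using $\matV_C^\T \matV_C = \matI_k$ collapses this expression to $\Sigma_C^{-2} - \matI_k$, a diagonal matrix whose $j$th entry is $\cos^{-2}\theta_j - 1 = \tan^2 \theta_j$. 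Taking square roots of the eigenvalues yields the singular values $\tan\theta_j$, and the spectral norm conclusion follows.

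The main obstacle is essentially bookkeeping: keeping the rectangular shapes straight (in particular $\matC$ is $k\times s$ with $k \leq s$, so I must use the thin SVD with an $s\times k$ right factor, not a full $s\times s$ one), justifying that the rank hypothesis really does make $\Sigma_C$ invertible, and verifying that the $\cos\theta_j = \sigma_j(\matW_k^\T \matQ)$ identification applies even though $\matQ$ and $\matW_k$ have different numbers of columns. None of these is deep, but they are the places where a sloppy argument would fail; everything else is driven by the single block identity $\matC^\T \matC + \matS^\T \matS = \matI_s$.
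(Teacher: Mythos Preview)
The paper does not prove this lemma; it is quoted verbatim as Lemma~4.3 of \cite{drineas2016structural} and used as a black box (see the proof of Theorem~\ref{thm:stat-structural}, where the identity is invoked with the pseudo-inverse on the second factor). So there is no in-paper proof to compare against.

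Your argument is correct and is essentially the standard one. You rightly observe that the displayed right-hand side is missing a pseudo-inverse on $\matW_{k}^{\T}\matQ$; without it the product is not even defined (the factors are $(n-k)\times s$ and $k\times s$), and the paper's own later use of the lemma confirms the intended form $(\matW_{k+}^{\T}\matQ)(\matW_{k}^{\T}\matQ)^{\pinv}$. The core computation, driven by $\matC^{\T}\matC+\matS^{\T}\matS=\matI_{s}$ and the thin SVD of $\matC$, cleanly yields $\Sigma_{C}^{-2}-\matI_{k}$ for the Gram matrix of $\matS\matC^{\pinv}$ after conjugation by $\matU_{C}$, and hence singular values $\tan\theta_{j}$. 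The rank hypothesis is exactly what guarantees $\Sigma_{C}$ is invertible and all $\theta_{j}<\pi/2$, so the tangents are finite; the identification $\sigma_{j}(\matW_{k}^{\T}\matQ)=\cos\theta_{j}$ is the usual one and holds for subspaces of unequal dimension, with the number of angles equal to $\min(k,s)=k$.
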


Matrix perturbation theory studies how a perturbation of a matrix
translate to perturbations of the matrix's eignevalues and eigenspaces.
In order to bound the perturbation of an eigenspace, one needs some
notion of distance between two subspaces. One common distance metric
between two subspaces is 
\begin{equation}
d_{2}({\cal U},{\cal W})\coloneqq\TNorm{\matP_{{\cal U}}-\matP_{{\cal W}}}\,.\label{eq:distance_between_subspaces}
\end{equation}
If $\matU$ and $\matV$ have the same number of columns, and both
have orthonormal columns, then 
\[
d_{2}(\matU,\matV)=\sqrt{1-\sigma_{\min}(\matU^{\T}\matV)^{2}}=\sin(\theta_{\max})=\TNorm{\sin\Theta(\matU,\matV)}
\]
where $\theta_{\max}$ is the maximum principal angle between $\range{\matU}$
and $\range{\matV}$~\cite[section 6.4.3]{golub2012matrix}.

A classical result that bounds the distance between the dominant subspaces
of two symmetric matrices in terms of the spectral norm of difference
between the two matrices is the Davis-Kahan $\sin(\Theta)$ theorem~\cite[Section 2]{davis1970rotation}.
We need the following corollary of this theorem:
\begin{thm}
[Corollary of Davis-Kahan $\sin\Theta$ Theorem \cite{davis1970rotation}]\label{thm:sin-theta-1}
Let $\matA,\tilde{\matA}\in\R^{n\times n}$ be two symmetric matrices,
both of rank at least $k$. Suppose that $\lambda_{k}>\tilde{\lambda}_{k+1}$
where $\lambda_{1}\geq\dots\geq\lambda_{n}$ and $\tilde{\lambda}_{1}\geq\dots\geq\tilde{\lambda}_{n}$
are the eigenvalues of $\matA$ and $\tilde{\matA}$. We have 
\[
d_{2}(\matV_{\matA,k},\matV_{\tilde{\matA},k})\leq\frac{\TNorm{\matA-\tilde{\matA}}}{\lambda_{k}-\tilde{\lambda}_{k+1}}\,.
\]
\end{thm}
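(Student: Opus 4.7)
The plan is to obtain this statement as a direct repackaging of the classical Davis--Kahan $\sin\Theta$ theorem, followed by a translation of the $\sin\Theta$ quantity into the subspace distance $d_2$ defined in~(\ref{eq:distance_between_subspaces}). The classical theorem, in the formulation of Davis and Kahan, asserts: if $\calU$ is an invariant subspace of a symmetric matrix $\matA$ whose associated eigenvalues lie in an interval $I_1$, and $\tilde\calU$ is an invariant subspace of a symmetric matrix $\tilde\matA$ whose associated eigenvalues (on $\tilde\calU$) lie in an interval disjoint from $I_1$, then the sine of the principal angles between $\calU$ and $\tilde\calU$ can be bounded by $\TNorm{\matA-\tilde\matA}$ divided by the separation between the two spectral regions. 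Equivalently, one can separate the eigenvalues of $\matA$ on $\calU$ from the eigenvalues of $\tilde\matA$ on $\tilde\calU^{\perp}$.

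First I would set $\calU = \range{\matV_{\matA,k}}$ and $\tilde\calU = \range{\matV_{\tilde\matA,k}}$; both subspaces have dimension $k$ since $\rank{\matA},\rank{\tilde\matA}\geq k$. The eigenvalues of $\matA$ restricted to $\calU$ are $\lambda_{1}\geq\dots\geq\lambda_{k}$, so they all lie in $I_1 = [\lambda_k,\lambda_1]$, while the eigenvalues of $\tilde\matA$ restricted to $\tilde\calU^{\perp}$ are $\tilde\lambda_{k+1}\geq\dots\geq\tilde\lambda_{n}$, so they all lie in $I_2 = (-\infty,\tilde\lambda_{k+1}]$. The hypothesis $\lambda_k>\tilde\lambda_{k+1}$ guarantees that $I_1$ and $I_2$ are disjoint with separation exactly $\delta = \lambda_k - \tilde\lambda_{k+1}>0$. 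Plugging into the Davis--Kahan theorem yields
\[
\TNorm{\sin\Theta(\matV_{\matA,k},\matV_{\tilde\matA,k})}\,\leq\,\frac{\TNorm{\matA-\tilde\matA}}{\lambda_k-\tilde\lambda_{k+1}}.
\]

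The remaining step is cosmetic: since $\matV_{\matA,k}$ and $\matV_{\tilde\matA,k}$ both have orthonormal columns and the same number of columns $k$, the identity recorded in Section~\ref{subsec:Matrix-Perturbations} gives $d_{2}(\matV_{\matA,k},\matV_{\tilde\matA,k}) = \TNorm{\sin\Theta(\matV_{\matA,k},\matV_{\tilde\matA,k})}$, which combined with the previous display delivers the claimed bound.

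I do not expect a serious obstacle here: the only subtleties are (i)~choosing the right pair of spectral intervals so that the relevant gap is $\lambda_k-\tilde\lambda_{k+1}$ (rather than, say, a two-sided gap of $\matA$ alone, which would require an unnecessary hypothesis), and (ii)~making sure that both subspaces actually have dimension $k$ so that the $d_2 = \TNorm{\sin\Theta}$ identity applies. Both are handled cleanly by the hypotheses $\lambda_k>\tilde\lambda_{k+1}$ and $\rank{\matA},\rank{\tilde\matA}\geq k$, so the proof is essentially a one-line invocation of Davis--Kahan.
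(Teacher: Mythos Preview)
Your proposal is correct and follows essentially the same route as the paper: both apply the Davis--Kahan $\sin\Theta$ theorem with the top-$k$ eigenspace of $\matA$ playing against the bottom-$(n-k)$ eigenspace of $\tilde\matA$, obtaining the gap $\lambda_k-\tilde\lambda_{k+1}$, and then identify $d_2$ with $\TNorm{\sin\Theta}$. The only cosmetic difference is that the paper invokes a residual-based formulation of the theorem (Stewart's version, with $\matR=\tilde\matA\matV_{\matA,k}-\matV_{\matA,k}\diag{\lambda_1,\dots,\lambda_k}$ and the observation $\TNorm{\matR}\leq\TNorm{\matA-\tilde\matA}$), whereas you invoke the classical statement directly; these are equivalent.
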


\begin{proof}
We use the following variant of the $\sin\Theta$ Theorem (see~\cite[Theorem 2.16]{stewart2001matrix}):
suppose a symmetric matrix $\matB$ has a spectral representation
\[
\matB=\matX\matL\matX^{\T}+\matY\matM\matY^{\T}
\]
where $\left[\matX\,\matY\right]$ is square orthonormal. Let the
orthonormal matrix $\matZ$ be of the same dimensions as $\matX$
and suppose that 
\[
\matR=\matB\matZ-\matZ\matN
\]
where $\matN$ is symmetric. Furthermore, suppose that the spectrum
of $\matN$ is contained in some interval $[\alpha,\beta]$ and that
for some $\delta>0$ the spectrum of $\matM$ lies outside of $[\alpha-\delta,\beta+\delta]$.
Then, 
\[
\TNorm{\sin\Theta(\matX,\matZ)}\leq\frac{\TNorm{\matR}}{\delta}\,.
\]
We prove Theorem~\ref{thm:sin-theta-1} by applying the aforementioned
variant of the $\sin\Theta$ Theorem with: $\matB=\tilde{\matA}$,
$\matX=\matV_{\tilde{\matA},k}$, $\matY=\matV_{\tilde{\matA},k+}$,
$\matL=\diag{\tilde{\lambda}_{1},\dots,\tilde{\lambda}_{k}}$, $\matM=\diag{\tilde{\lambda}_{k+1},\dots,\tilde{\lambda}_{n}},$
$\matZ=\matV_{\matA,k}$, $\matN=\diag{\lambda_{1},\dots,\lambda_{k}}$,
and $\delta=\lambda_{k}-\tilde{\lambda}_{k+1}$ . It is easy to verify
that the conditions of the $\sin\Theta$ Theorem hold, so 
\[
\TNorm{\sin\Theta(\matV_{\matA,k},\matV_{\tilde{\matA},k})}\leq\frac{\TNorm{\matR}}{\lambda_{k}-\tilde{\lambda}_{k+1}}
\]
where $\matR=\tilde{\matA}\matV_{\matA,k}-\matV_{\matA,k}\matN$.
We have $\matA\matV_{\matA,k}=\matV_{\matA,k}\matN$ so $\TNorm{\matR}=\TNorm{(\tilde{\matA}-\matA)\matV_{\matA,k}}\leq\TNorm{\tilde{\matA}-\matA}$.
Combining this inequality with the previous one and noting that $d_{2}(\matV_{\matA,k},\matV_{\tilde{\matA},k})=\TNorm{\sin\Theta(\matV_{\matA,k},\matV_{\tilde{\matA},k})}$
completes the proof.
\end{proof}
Under the conditions of Theorem~\ref{thm:sin-theta-1}, since $\matA$
and $\tilde{\matA}$ are symmetric matrices, Weyl's inequality implies
that 
\[
d_{2}(\matV_{\matA,k},\matV_{\tilde{\matA},k})\leq\frac{\TNorm{\matA-\tilde{\matA}}}{\lambda_{k}-\lambda_{k+1}-\TNorm{\matA-\tilde{\matA}}}\,
\]
as long as $\TNorm{\matA-\tilde{\matA}}<\lambda_{k}-\lambda_{k+1}$.
Thus, if $\TNorm{\matA-\tilde{\matA}}\ll\lambda_{k}-\lambda_{k+1}$
then we can compute an approximation to the $k$-dimensional dominant
subspace of $\matA$ by computing the $k$-dimensional dominant subspace
of $\tilde{\matA}$.

\section{\label{sec:dim-reduce-pcr}PCR with Dimensionality Reduction}

Our goal is to design algorithms which compute an approximate solution
to the PCR or PCP problem. Our strategy for designing such algorithms
is to reduce the dimensions of $\matA$ prior to computing the PCR/PCP
solution. Specifically, let $\matR\in\R^{d\times t}$ be some matrix
where $t\leq d$, and define 
\begin{equation}
\x_{\matR,k}:=\matR\matV_{\matA\matR,k}(\matA\matR\matV_{\matA\matR,k})^{+}\b\,.\label{eq:apcr}
\end{equation}
The rationale in Eq.~(\ref{eq:apcr}) is as follows. First, $\matA$
is compressed by computing $\matA\matR$ (this is the dimensionality
reduction step). Then we compute the rank $k$ PCR solution of $\matA\matR$
and $\b$; this is $(\matA\matR\matV_{\matA\matR,k})^{+}\b$. Finally,
the solution is projected back to the original space by multiplying
by $\matR\matV_{\matA\matR,k}$. Obviously, given $\matR$ we can
compute $\x_{\matR,k}$ in $O(ndt)$ (and even faster, if $\matA$
is sparse), so if $t\ll\min(n,d)$ there is a potential for significant
gain in terms of computational complexity\emph{ }provided it is possible
to compute $\matR$ efficiently as well. Furthermore, if we design
$\matR$ to have some special structure that allows us to compute
$\matA\matR$ in $O(nt^{2})$ time, the overall complexity would reduce
to $O(nt^{2})$.

Of course, $\x_{\matR,k}$ is not the PCR solution $\x_{k}$ (unless
$\matR=\matV_{\matA,k}$). This suggests the following mathematical
question (which, in turn, leads to an algorithmic question): under
which conditions on $\matR$ is $\x_{\matR,k}$ a good approximation
to the PCR solution $\x_{k}$? In this section, we derive general
conditions on $\matR$ that ensure deterministically that $\x_{\matR,k}$
is in some sense (which we formalize later in this section) a good
approximation of $\x_{k}$. The results in this section are non algorithmic
and independent of the method in which $\matR$ is computed. In the
next section we address the algorithmic question: how can we compute
such $\matR$ matrices efficiently? 

We approach the mathematical question from two different perspectives:
an optimization perspective and a statistical perspective. In the
optimization perspective, we consider PCR/PCP as an optimization problem
(Eq.~(\ref{eq:pcr-opt-problem})), and ask whether the value of the
objective function of $\x_{\matR,k}$ is close to optimal value of
the objective function, while upholding the constraints approximately
(see Definition~\ref{def:apcr-apcp}). In the statistical perspective,
we treat $\x_{k}$ and $\x_{\matR,k}$ as statistical estimators,
and compare their excess risk under a fixed-design model. Interestingly,
the conditions we derive for $\matR$ are the same for both perspectives. 

Before proceeding, we remark that an important special case of (\ref{eq:apcr})
is when $\matR$ has exactly $k$ columns. In that case, for brevity,
we omit the subscript $k$ from $\x_{\matR,k}$ and notice that 
\begin{equation}
\x_{\matR}=\matR(\matA\matR)^{+}\b\,.\label{eq:cls}
\end{equation}
Eq.~(\ref{eq:cls}) is valid even if $\matR$ has more than $k$
columns and/or the columns are not orthonormal. Thus, an established
technique in the literature, frequently referred to as Compressed
Least Squares (CLS)~\cite{maillard2009compressed,Kaban14,slawski2017compressed,Slawski17,THM17},
is to generate a random $\matR$ and compute $\x_{\matR}$. To avoid
confusion, we stress the difference between (\ref{eq:apcr}) and (\ref{eq:cls}):
in (\ref{eq:apcr}) we compute a PCR solution on the compressed matrix
$\matA\matR$, while in~(\ref{eq:cls}) ordinary least squares is
used. These two strategies coincide when $\matR$ has $k$ columns.
In this paper, we focus on Eq.~(\ref{eq:apcr}) and consider Eq.~(\ref{eq:cls})
only when it is a special case of Eq.~(\ref{eq:apcr}) (when $\matR$
has exactly $k$ columns). For an analysis of CLS from a statistical
perspective, see recent work by Slawski~\cite{Slawski17}. 

\subsection{Optimization Perspective}

The PCR solution can be written as the solution of a constrained least
squares problem:
\[
\x_{k}=\arg\min_{\begin{array}{c}
\TNorm{\matV_{\matA,k+}^{\T}\x}=0\\
\x\in\range{\matA^{\T}}
\end{array}}\TNorm{\matA\x-\b}.
\]
In order to analyze a candidate solution $\tilde{\x}$ from an optimization
perspective, we need to decide how to treat the constraints. One option
is to require a candidate $\tilde{\x}$ to be inside the feasible
set. Indeed, Pilanci and Wainwright recently considered sketching
based methods for constrained least squares regression~\cite{pilanci2015randomized}.
However, there is no evident way to impose $\matV_{\matA,k+}^{\matT}\x=0$
without actually computing $\matV_{\matA,k+}$, which is as expensive
as computing $\matV_{\matA,k}$. Thus, if we require an approximate
solution to be inside the feasible set, we might as well compute the
exact PCR solution. Thus, in our notion of approximate PCR, we relax
the constraints and require only that the approximate solution is
close to meeting the constraint, i.e. we seek a solution for which
$\TNorm{\matA\tilde{\x}-\b}$ is close to $\TNorm{\matA\x_{k}-\b}$
\emph{and} $\TNorm{\matV_{\matA,k+}^{\T}\tilde{\x}}$ is small. 

Similarly, the if $\matA$ has full rank the PCP solution can written
as the solution of a constrained least squares problem:
\[
\b_{k}=\arg\min_{\begin{array}{c}
\TNorm{\matU_{\matA,k+}^{\T}\tilde{\b}}=0\\
\tilde{\b}\in\range{\matA}
\end{array}}\TNorm{\tilde{\b}-\b}
\]
Again, our notion of approximate PCP relaxes the constraint.

The discussion above motivates the following definition of approximate
PCR/PCP. 
\begin{defn}
[Approximate PCR and PCP] \label{def:apcr-apcp}An estimator $\tilde{\x}$
is an \emph{$(\epsilon,\upsilon)$-approximate PCR} of rank $k$ if
\[
\TNorm{\matA\tilde{\x}-\b}=\TNorm{\matA\x_{k}-\b}\pm\epsilon\TNorm{\b}
\]
and $\TNorm{\matV_{\matA,k+}^{\T}\tilde{\x}}\leq\upsilon\TNorm{\b}$.
An estimator $\tilde{\b}$ is an \emph{($\epsilon,\upsilon)$-approximate
PCP} of rank $k$ if 
\[
\TNorm{\tilde{\b}-\b}=\TNorm{\b_{k}-\b}\pm\epsilon\TNorm{\b}
\]
and $\TNorm{\matU_{\matA,k+}^{\T}\tilde{\b}}\leq\upsilon\TNorm{\b}$.
\end{defn}

Before proceeding, a few remarks are in order.
\begin{enumerate}
\item Imposing no constraints on $\tilde{\x}$ (or $\tilde{\b}$) does not
make sense: we can always form or approximate the ordinary least squares
solution and it will demonstrate a smaller objective value. Indeed,
the main motivation for using PCR to impose some form of regularization,
so it is crucial the definition of approximate PCR/PCP have some form
of regularization built-in.
\item We require only additive error on the objective function, while relative
error bounds are usually viewed as more desirable. For approximate
PCR, requiring relative error bounds is likely unrealistic: since
it is possible that $\b=\matA\x_{k}$, any algorithm that provides
a relative error bound must search inside a space that contains $\range{\matV_{\matA,k}}$.
This is a strong restriction (and plausibly one that actually requires
computing $\matV_{\matA,k}$ ).
\item Approximate PCR implies approximate PCP: if $\tilde{\x}$ is an $(\epsilon,\nu)$-approximate
PCR then $\matA\tilde{\x}$ is an $(\epsilon,\sigma_{k+1}\nu)$-approximate
PCP. 
\item Our notion of approximate PCP is somewhat similar to the notion of
approximate PCP proposed recently by Allen-Zhu and Li~\cite{AL17}.
\item Yet another notion of approximate PCR appears in~\cite[Theorem 5]{BM14}.
They too, consider an additive error on objective function, but instead
of considering the distance to the dominant subspace they bound the
distance of the approximate solution to the true solution. We remark
that a bound on $\TNorm{\x_{k}-\tilde{\x}}$ trivially implies a bound
on $\TNorm{\matV_{\matA,k+}^{\T}\tilde{\x}}$.
\item Arguably, it would have been preferable to require the approximate
PCR solution $\tilde{\x}$ to be such that $\TNorm{\x_{k}-\tilde{\x}}$
is small (relative to $\TNorm{\x_{k}}$). However, be believe that
providing such guarantees with reasonable sketch sizes requires iterations.
In this paper, we focus predominately on algorithms that do not require
iterations (the only exception being the input sparsity algorithm
in subsection~\ref{subsec:input-sparsity}). 
\end{enumerate}
We are now ready to state general conditions on $\matR$ that ensure
deterministically that $\x_{\matR}$ is an approximate PCR, and conditions
on $\matR$ that ensure deterministically that $\matA\x_{\matR,k}$
is an approximate PCP. 

\begin{thm}
\label{thm:structural}Suppose that $\matR\in\R^{d\times s}$ where
$s\ge k$. Assume that $\nu\in(0,1)$. 
\begin{enumerate}
\item If $d_{2}\left(\matU_{\mat A\matR,k},\matU_{\mat A,k}\right)\leq\nu$
then $\matA\x_{\matR,k}$ is an $(\nu,\nu)$-approximate PCP. 
\item \label{enu:structural_k_equal_s}If $s=k$, $\matR$ has orthonormal
columns (i.e., $\matR^{\T}\matR=\matI_{k})$ and $d_{2}\left(\matR,\matV_{\mat A,k}\right)\leq\nu(1+\nu^{2})^{-1/2}$
then $\x_{\matR}$ is an $\left(\frac{\sigma_{k+1}}{\sigma_{k}}\nu,\frac{\nu}{\left(\sqrt{1-\nu^{2}}-\nu\right)\sigma_{k}}\right)$-approximate
PCR. 
\end{enumerate}
\end{thm}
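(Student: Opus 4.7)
Plan: The two items are independent; I would handle them in turn.

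For item 1, the key identities are $\matA\x_{\matR,k} = \matP_{\matU_{\matA\matR,k}}\b$ and $\b_k = \matP_{\matU_{\matA,k}}\b$. The objective-value bound then follows by reverse triangle inequality and Theorem \ref{thm:projection_equality}(2): $|\TNorm{\matA\x_{\matR,k}-\b} - \TNorm{\b_k-\b}| \le \TNorm{(\matP_{\matU_{\matA\matR,k}}-\matP_{\matU_{\matA,k}})\b} \le d_2(\matU_{\matA\matR,k},\matU_{\matA,k})\TNorm{\b} \le \nu\TNorm{\b}$. The constraint bound uses $\matU_{\matA,k+}^\T\matP_{\matU_{\matA,k}}=0$, so $\matU_{\matA,k+}^\T\matA\x_{\matR,k} = \matU_{\matA,k+}^\T(\matP_{\matU_{\matA\matR,k}}-\matP_{\matU_{\matA,k}})\b$, yielding the same bound.

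For item 2, the workhorse is the factorization $\matA\matR = \matN\cdot(\matV_{\matA,k}^\T\matR)$, where $\matN := \matU_{\matA,k}\Sigma_{\matA,k} + \matU_{\matA,k+}\Sigma_{\matA,k+}\matT$ and $\matT := \matV_{\matA,k+}^\T\matR(\matV_{\matA,k}^\T\matR)^{-1}$; this comes from $\matA = \matU_{\matA,k}\Sigma_{\matA,k}\matV_{\matA,k}^\T + \matU_{\matA,k+}\Sigma_{\matA,k+}\matV_{\matA,k+}^\T$ and right-multiplying by $\matR$. Unpacking the hypothesis $d_2(\matR,\matV_{\matA,k}) \le \nu/\sqrt{1+\nu^2}$ as $\sin\theta_{\max}(\matR,\matV_{\matA,k}) \le \nu/\sqrt{1+\nu^2}$ yields both $\cos\theta_{\max} \ge 1/\sqrt{1+\nu^2} > 0$ (so $\matV_{\matA,k}^\T\matR$ is invertible and the factorization is legitimate) and $\tan\theta_{\max} \le \nu$, which by Lemma \ref{lem:tan-to-spectral} gives $\TNorm{\matT}\le\nu$. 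For the objective bound, parametrize $\matA\matR\y = \matU_{\matA,k}\Sigma_{\matA,k}\z + \matU_{\matA,k+}\Sigma_{\matA,k+}\matT\z$ with $\z=(\matV_{\matA,k}^\T\matR)\y$; the orthogonality of $\matU_{\matA,k}$ and $\matU_{\matA,k+}$ gives $\TNormS{\matA\matR\y} \ge \sigma_k^2\TNormS{\z}$ while $\TNorm{\matU_{\matA,k+}^\T\matA\matR\y} \le \sigma_{k+1}\nu\TNorm{\z}$, so taking the supremum ratio yields $d_2(\matA\matR,\matU_{\matA,k}) \le \sigma_{k+1}\nu/\sigma_k$, after which the objective claim follows exactly as in item 1.

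For the constraint bound in item 2, writing $\x_\matR = \matR(\matA\matR)^{+}\b$ gives $\TNorm{\matV_{\matA,k+}^\T\x_\matR} \le \TNorm{\matV_{\matA,k+}^\T\matR}\cdot\TNorm{\b}/\sigma_{\min}(\matA\matR)$. The numerator is at most $\sin\theta_{\max} \le \nu$; for the denominator, split $\matA\matR = \matM_1 + \matM_2$ into its $\matU_{\matA,k}$- and $\matU_{\matA,k+}$-components and use $\sigma_{\min}(\matM_1+\matM_2) \ge \sigma_{\min}(\matM_1) - \TNorm{\matM_2} \ge \sigma_k\cos\theta_{\max} - \sigma_{k+1}\sin\theta_{\max} \ge \sigma_k(\sqrt{1-\nu^2}-\nu)$, using $\sigma_{k+1}\le\sigma_k$ and the loose bound $\sin\theta_{\max}\le\nu$. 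The only real subtlety is extracting the factorization $\matA\matR=\matN(\matV_{\matA,k}^\T\matR)$ with $\TNorm{\matT}\le\nu$ from the sine-based hypothesis (which requires the tangent identity of Lemma \ref{lem:tan-to-spectral}); the remaining calculations are tight manipulations of singular-value inequalities, with some care needed to pick the right form of the angular bound (tight $\sin \le \nu/\sqrt{1+\nu^2}$ versus loose $\sin \le \nu$) at each step so the final constants match the theorem statement.
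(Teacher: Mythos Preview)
Your proposal is correct and takes essentially the same approach as the paper. The only difference is organizational: the paper packages the three key estimates for part~2 --- the bounds on $\TNorm{\matV_{\matA,k+}^{\T}\matR}$, on $\sigma_{\min}(\matA\matR)$, and on $d_2(\matU_{\matA\matR},\matU_{\matA,k})$ --- into separate Lemmas~\ref{lem:11} and~\ref{lem:14} (the latter invoking the Drineas et al.\ structural result, Theorem~\ref{thm:krylov_srtuctural}), whereas you derive all three inline via the same factorization $\matA\matR=\matN\,(\matV_{\matA,k}^{\T}\matR)$ that underlies those lemmas.
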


Before proving this theorem, we state a theorem which is a corollary
of a more general result proved recently by Drineas et al.~\cite{drineas2016structural},
and then proceed to proving a couple of auxiliary lemmas. 
\begin{thm}
[Corollary of Theorem 2.1 in \cite{drineas2016structural}]\label{thm:krylov_srtuctural}
Let $\matA$ be an $m\times n$ matrix with singular value decomposition
$\matA=\matU_{\matA}\mat{\Sigma}_{\matA}\matV_{\matA}^{\matT}$ .
Let $k\ge0$ and let $\matR\in\R^{d\times k}$ be any matrix such
that $\matV_{\matA,k}^{\matT}\matR$ has full rank. Then,
\end{thm}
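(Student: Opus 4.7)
The plan is to apply Lemma \ref{lem:tan-to-spectral} with $\matQ$ being an orthonormal basis for $\range{\matA\matR}$ and $\matW = \matU_\matA$, partitioned at column $k$ so that $\matW_k = \matU_{\matA,k}$ and $\matW_{k+} = \matU_{\matA,k+}$. The anticipated conclusion is a bound on $\TNorm{\tan\Theta(\matA\matR,\matU_{\matA,k})}$ of the form $\tfrac{\sigma_{k+1}(\matA)}{\sigma_{k}(\matA)}\TNorm{(\matV_{\matA,k+}^{\T}\matR)(\matV_{\matA,k}^{\T}\matR)^{-1}}$, which is the canonical structural identity from \cite{drineas2016structural} once specialized to a single sketching factor $\matR$.

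First I would discharge the rank hypothesis of Lemma \ref{lem:tan-to-spectral}. Since $\matV_{\matA,k}^{\T}\matR$ is $k\times k$ and by assumption has full rank, and since $\sigma_{k}(\matA)>0$, the product $\matU_{\matA,k}^{\T}\matA\matR = \Sigma_{\matA,k}\matV_{\matA,k}^{\T}\matR$ has rank $k$. This forces $\matA\matR$ itself to have rank $k$, and forces $\matU_{\matA,k}^{\T}\matQ$ to have rank $k$ for any orthonormal basis $\matQ$ of $\range{\matA\matR}$.

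The core step is to rewrite the quantity $(\matU_{\matA,k+}^{\T}\matQ)(\matU_{\matA,k}^{\T}\matQ)^{\pinv}$ appearing in Lemma \ref{lem:tan-to-spectral} in terms of $\matA\matR$ directly, so that the SVD of $\matA$ can be plugged in. Because $\matA\matR$ has rank exactly $k$, I can write $\matA\matR = \matQ\matT$ for an invertible $k\times k$ matrix $\matT$. The $\matT$ factor then cancels between the two terms, giving
\[
(\matU_{\matA,k+}^{\T}\matQ)(\matU_{\matA,k}^{\T}\matQ)^{\pinv} = (\matU_{\matA,k+}^{\T}\matA\matR)(\matU_{\matA,k}^{\T}\matA\matR)^{\pinv}.
\]
Substituting the SVD, both right-hand factors become explicit: $\matU_{\matA,k+}^{\T}\matA\matR = \Sigma_{\matA,k+}\matV_{\matA,k+}^{\T}\matR$, while $\matU_{\matA,k}^{\T}\matA\matR = \Sigma_{\matA,k}\matV_{\matA,k}^{\T}\matR$ is invertible, so its pseudoinverse is a true inverse and splits cleanly as $(\matV_{\matA,k}^{\T}\matR)^{-1}\Sigma_{\matA,k}^{-1}$. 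Taking spectral norms and using submultiplicativity delivers the claimed bound.

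The main obstacle is the pseudoinverse bookkeeping: the identity $(XY)^{\pinv}=Y^{\pinv}X^{\pinv}$ fails in general, and $\matT$ only cancels cleanly when the relevant factors are invertible. I would therefore be careful to first establish invertibility of $\matT$ and of $\matU_{\matA,k}^{\T}\matA\matR$ (both following from the rank hypothesis on $\matV_{\matA,k}^{\T}\matR$) before performing the manipulations above; once this is in place, the computation is a direct substitution of the SVD and a single application of submultiplicativity.
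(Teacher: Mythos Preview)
The paper does not actually prove this theorem: it is stated as a corollary of Theorem~2.1 in \cite{drineas2016structural} and no argument is supplied. Your proposal therefore goes beyond the paper by giving a self-contained derivation from Lemma~\ref{lem:tan-to-spectral} (which is itself quoted from the same reference). The route you outline is correct, and in fact yields the slightly stronger conclusion with $\tan\Theta$ on the left-hand side, from which the stated $\sin\Theta$ bound follows since $\sin\theta\le\tan\theta$ on $[0,\pi/2)$.

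Two small points worth tightening. First, Lemma~\ref{lem:tan-to-spectral} as stated requires $\matW$ to be a square orthogonal matrix, whereas the thin factor $\matU_{\matA}$ is $m\times\min(m,n)$; when $m>n$ you must extend $\matU_{\matA}$ by an orthonormal complement $\matZ$. This is harmless because $\range{\matA\matR}\subseteq\range{\matU_{\matA}}$, so $\matZ^{\T}\matQ=0$ and the extra block contributes nothing to $\matW_{k+}^{\T}\matQ$. Second, your endpoint is $\TNorm{\Sigma_{\matA,k+}}\,\TNorm{\Sigma_{\matA,k}^{-1}}\,\TNorm{(\matV_{\matA,k+}^{\T}\matR)(\matV_{\matA,k}^{\T}\matR)^{-1}}$, while the theorem's right-hand side is phrased as $\TNorm{\tan\Theta(\matR,\matV_{\matA,k})}$. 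To close this, run the same cancellation once more: write $\matR=\matQ'\matT'$ with $\matQ'$ orthonormal and $\matT'$ invertible, cancel $\matT'$, and invoke Lemma~\ref{lem:tan-to-spectral} with $\matW$ an orthogonal completion of $\matV_{\matA}$; since $\matV_{\matA,k+}^{\T}\matQ'$ is a subblock of $\matW_{k+}^{\T}\matQ'$, you obtain $\TNorm{(\matV_{\matA,k+}^{\T}\matR)(\matV_{\matA,k}^{\T}\matR)^{-1}}\le\TNorm{\tan\Theta(\matR,\matV_{\matA,k})}$, which is exactly what is needed.
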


\[
\TNorm{\sin\Theta(\matA\matR,\matU_{\matA,k})}\leq\TNorm{\mat{\Sigma}_{\matA,k+}}\cdot\TNorm{\mat{\Sigma}_{\matA,k}^{-1}}\cdot\TNorm{\tan\Theta(\matR,\matV_{\matA,k})}
\]
\begin{lem}
\label{lem:11}Assume $\rank{\matA}\ge k$. If $\matR\in\R^{d\times k}$
has orthonormal columns and $d_{2}\left(\matR,\matV_{\mat A,k}\right)\leq\nu$
then the following bounds hold:

\begin{equation}
\TNorm{\matV_{\mat A,k+}^{\T}\matR}\leq\nu\label{eq:orho-to-minor}
\end{equation}
\begin{equation}
\sigma_{\min}\left(\matA\matR\right)\ge\sigma_{k}\left(\sqrt{1-\nu^{2}}-\nu\right)\label{eq:singular_values_bound}
\end{equation}
Furthermore, if $\nu<1$ then $\rank{\matA\matR}=k$.
\end{lem}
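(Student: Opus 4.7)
The plan is to extract a single auxiliary bound, namely $\TNorm{\matP_{\matV_{\matA,k}}^\perp \matR}\leq\nu$, and then obtain all three conclusions as short consequences.

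First I would establish the auxiliary inequality. Since both $\matR$ and $\matV_{\matA,k}$ have $k$ orthonormal columns, the formula for $d_2$ from Section~\ref{subsec:Matrix-Perturbations} gives $\sigma_{\min}(\matV_{\matA,k}^{\T}\matR)=\sqrt{1-d_2(\matR,\matV_{\matA,k})^{2}}\geq\sqrt{1-\nu^{2}}$. Because $\matR$ has orthonormal columns, for any $\x$ the Pythagorean decomposition $\TNormS{\matR\x}=\TNormS{\matV_{\matA,k}^{\T}\matR\x}+\TNormS{\matP_{\matV_{\matA,k}}^\perp\matR\x}$ together with $\TNorm{\matR\x}=\TNorm{\x}$ yields $\TNormS{\matP_{\matV_{\matA,k}}^\perp\matR}\leq 1-\sigma_{\min}^{2}(\matV_{\matA,k}^{\T}\matR)\leq\nu^{2}$.

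For (\ref{eq:orho-to-minor}), the orthogonality $\matV_{\matA,k+}^{\T}\matV_{\matA,k}=0$ allows me to rewrite $\matV_{\matA,k+}^{\T}\matR=\matV_{\matA,k+}^{\T}\matP_{\matV_{\matA,k}}^\perp\matR$, and submultiplicativity combined with $\TNorm{\matV_{\matA,k+}^{\T}}\leq 1$ and the auxiliary bound finishes the argument. For (\ref{eq:singular_values_bound}), I would split via the thin SVD of $\matA$ as $\matA\matR=\matU_{\matA,k}\Sigma_{\matA,k}\matV_{\matA,k}^{\T}\matR+\matU_{\matA,k+}\Sigma_{\matA,k+}\matV_{\matA,k+}^{\T}\matR$. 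The first term has smallest singular value at least $\sigma_{k}\cdot\sigma_{\min}(\matV_{\matA,k}^{\T}\matR)\geq\sigma_{k}\sqrt{1-\nu^{2}}$; the second term has spectral norm at most $\sigma_{k+1}\TNorm{\matV_{\matA,k+}^{\T}\matR}\leq\sigma_{k+1}\nu\leq\sigma_{k}\nu$. Weyl's perturbation inequality for singular values then delivers $\sigma_{\min}(\matA\matR)\geq\sigma_{k}(\sqrt{1-\nu^{2}}-\nu)$.

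The subtlety, and the only place I expect to spend any care, is the rank statement. Because $\sqrt{1-\nu^{2}}-\nu$ can be non-positive for $\nu\in[1/\sqrt{2},1)$, the bound from (\ref{eq:singular_values_bound}) does not by itself establish non-degeneracy in that regime. Instead I would reuse the same decomposition but appeal directly to the orthogonality of $\range{\matU_{\matA,k}}$ and $\range{\matU_{\matA,k+}}$: Pythagoras yields $\TNormS{\matA\matR\x}\geq\TNormS{\matU_{\matA,k}\Sigma_{\matA,k}\matV_{\matA,k}^{\T}\matR\x}\geq\sigma_{k}^{2}(1-\nu^{2})\TNormS{\x}$, which is strictly positive whenever $\nu<1$. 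Hence $\matA\matR\in\R^{n\times k}$ has trivial kernel, so $\rank{\matA\matR}=k$.
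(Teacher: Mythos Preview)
Your argument is correct and follows essentially the same route as the paper: the same bound $\sigma_{\min}(\matV_{\matA,k}^{\T}\matR)\geq\sqrt{1-\nu^{2}}$, the same SVD splitting of $\matA\matR$, and the same Weyl-type inequality for (\ref{eq:singular_values_bound}). Your use of $\matP_{\matV_{\matA,k}}^{\perp}$ directly is slightly cleaner than the paper's introduction of a completion $\matV_{\matA,k\perp}=[\matV_{\matA,k+}\ \matZ]$, but the content is the same.

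The one place where you genuinely diverge is the rank claim, and your version is actually stronger. The paper simply asserts that ``when $\nu<1$ we have $\sigma_{\min}(\matA\matR)>0$'' from (\ref{eq:singular_values_bound}), but as you observe, $\sqrt{1-\nu^{2}}-\nu\leq 0$ for $\nu\in[1/\sqrt{2},1)$, so that inference is not valid on the full range. Your Pythagorean argument, using the orthogonality of $\range{\matU_{\matA,k}}$ and $\range{\matU_{\matA,k+}}$ to get $\TNormS{\matA\matR\x}\geq\sigma_{k}^{2}(1-\nu^{2})\TNormS{\x}$, closes that gap cleanly and covers all $\nu<1$.
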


\begin{proof}
Since both $\matV_{\matA,k}$ and $\matR$ have orthonormal columns,
$d_{2}\left(\matR,\matV_{\mat A,k}\right)\leq\nu$ implies that the
square of the singular values of $\matV_{\mat A,k}^{\T}\matR$ lie
inside the interval $[1-\nu^{2},1]$. The eigenvalues of $\matR^{\T}\matV_{\mat A,k}\matV_{\mat A,k}^{\T}\matR$
are exactly the square of the singular values of $\matV_{\mat A,k}^{\T}\matR$,
so the eigenvalues of $\matI_{k}-\matR^{\T}\matV_{\mat A,k}\matV_{\mat A,k}^{\T}\matR$
lie in $[0,\nu^{2}]$. Let $\matZ$ be any matrix with orthonormal
columns that completes $\matV_{\matA}$ to a basis (i.e. $\matV_{\matA}\matV_{\matA}^{\T}+\matZ\matZ^{\T}=\matI_{d}$)
and is orthogonal to $\matV_{\matA}$ (i.e., $\matV_{\matA}^{\T}\matZ=0$).
Note that $\matZ$ can be an empty matrix if $d\leq n$. Denote $\matV_{\matA,k\perp}=\left[\begin{array}{cc}
\matV_{\matA,k+} & \matZ\end{array}\right]$. We have 
\begin{eqnarray*}
\TNorm{\matV_{\mat A,k\perp}^{\T}\matR} & = & \sqrt{\TNorm{\matR^{\T}\matV_{\mat A,k\perp}\matV_{\mat A,k\perp}^{\T}\matR}}\\
 & = & \sqrt{\TNorm{\matI_{k}-\matR^{\T}\matV_{\mat A,k}\matV_{\mat A,k}^{\T}\matR}}\\
 & \leq & \nu
\end{eqnarray*}
where we used the fact that $\matV_{\mat A,k}\matV_{\mat A,k}^{\T}+\matV_{\mat A,k\perp}\matV_{\mat A,k\perp}^{\T}=\matI_{d}$.
We now note that $\matV_{\matA,k+}^{\T}\matR$ is a submatrix of $\matV_{\matA,k\perp}^{\T}$
so $\TNorm{\matV_{\mat A,k+}^{\T}\matR}\leq\TNorm{\matV_{\mat A,k\perp}^{\T}\matR}\leq\nu$.
This establishes the first part of the theorem.

As for the second part, recall the following identities: 1) for any
matrix $\matX$ and $\matY$ of the same size: $\sigma_{\min}\left(\matX\pm\matY\right)\ge\sigma_{\min}\left(\matX\right)-\sigma_{\max}\left(\matY\right)$
\cite[Theorem 3.3.19]{horn1990matrix}, 2) if the number of rows in
$\matX$ and $\matY$ is at least as large as the number of columns,
and $\matX\matY$ is defined, then $\sigma_{\min}\left(\matX\matY\right)\ge\sigma_{\min}\left(\matX\right)\sigma_{\min}\left(\matY\right)$.
We have

\begin{eqnarray*}
\sigma_{\min}\left(\matA\matR\right) & = & \sigma_{\min}(\matA\matV_{\mat A,k}\matV_{\mat A,k}^{\T}\matR+\matA\matV_{\mat A,k+}\matV_{\mat A,k+}^{\T}\matR)\\
 & \geq & \sigma_{\min}(\matA\matV_{\mat A,k}\matV_{\mat A,k}^{\T}\matR)-\sigma_{\max}(\matA\matV_{\mat A,k+}\matV_{\mat A,k+}^{\T}\matR)\\
 & \geq & \sigma_{\min}(\matA\matV_{\mat A,k})\sigma_{\min}(\matV_{\mat A,k}^{\T}\matR)-\sigma_{\max}(\matA\matV_{\mat A,k+})\sigma_{\max}(\matV_{\mat A,k+}^{\T}\matR)\\
 & = & \sigma_{k}\sigma_{\min}(\matV_{\mat A,k}^{\T}\matR)-\sigma_{k+1}\sigma_{\max}(\matV_{\mat A,k+}^{\T}\matR)\\
 & \geq & \sigma_{k}\sqrt{1-\nu^{2}}-\sigma_{k+1}\nu\\
 & \geq & \sigma_{k}(\sqrt{1-\nu^{2}}-\nu)
\end{eqnarray*}
where the first equality follows from the fact that $\matA(\matV_{\mat A,k}\matV_{\mat A,k}^{\T}+\matV_{\mat A,k+}\matV_{\mat A,k+}^{\T})=\matA$.
When $\nu<1$ we have $\sigma_{\min}(\matA\matR)>0$, so indeed the
rank of $\matA\matR$ is $k$.
\end{proof}
\begin{lem}
\label{lem:14}Assume $\rank{\matA}\ge k$. Suppose that $\matR\in\R^{d\times k}$
has orthonormal columns, and that $d_{2}\left(\matR,\matV_{\mat A,k}\right)\leq\nu(1+\nu^{2})^{-1/2}<1$.
We have 
\[
d_{2}(\matU_{\matA\matR},\matU_{\matA,k})\leq\frac{\sigma_{k+1}}{\sigma_{k}}\nu\,.
\]
\end{lem}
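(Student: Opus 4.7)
The plan is to combine the Drineas et al.\ structural result (Theorem~\ref{thm:krylov_srtuctural}) with a short trigonometric conversion from $\sin$ to $\tan$. All the non-trivial work was already done when proving Lemma~\ref{lem:11}; what remains is elementary bookkeeping.

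First I would check that the hypotheses of Theorem~\ref{thm:krylov_srtuctural} are met. Let $\theta_{\max}$ denote the largest principal angle between $\range{\matR}$ and $\range{\matV_{\matA,k}}$. Since both matrices have orthonormal columns and the same number of columns,
\[
\sin(\theta_{\max})=d_{2}(\matR,\matV_{\matA,k})\leq\nu(1+\nu^{2})^{-1/2}<1,
\]
so $\cos(\theta_{\max})\geq(1+\nu^{2})^{-1/2}>0$. This means every singular value of $\matV_{\matA,k}^{\T}\matR$ is strictly positive (equal to $\cos(\theta_j)$), which gives full rank of $\matV_{\matA,k}^{\T}\matR$ as required. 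I would also invoke Lemma~\ref{lem:11} (applied with parameter $\nu(1+\nu^{2})^{-1/2}<1$) to conclude $\rank{\matA\matR}=k$, so that $\matU_{\matA\matR}$ has exactly $k$ orthonormal columns.

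Next I would convert the $\sin$ bound into a $\tan$ bound. Since $\sin^{2}(\theta_{\max})\leq \nu^{2}/(1+\nu^{2})$, we have $\cos^{2}(\theta_{\max})\geq 1/(1+\nu^{2})$, and therefore $\tan^{2}(\theta_{\max})\leq \nu^{2}$. Because $\tan$ is monotone on $[0,\pi/2)$ and the spectral norm of the diagonal matrix $\tan\Theta(\matR,\matV_{\matA,k})$ equals $\tan(\theta_{\max})$, this gives
\[
\TNorm{\tan\Theta(\matR,\matV_{\matA,k})}\leq\nu .
\]

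Finally I would plug this into Theorem~\ref{thm:krylov_srtuctural}. We have $\TNorm{\Sigma_{\matA,k+}}=\sigma_{k+1}$ and $\TNorm{\Sigma_{\matA,k}^{-1}}=1/\sigma_{k}$, hence
\[
\TNorm{\sin\Theta(\matA\matR,\matU_{\matA,k})}\leq\frac{\sigma_{k+1}}{\sigma_{k}}\cdot\nu .
\]
To conclude, I would note that $\range{\matA\matR}=\range{\matU_{\matA\matR}}$ and that both $\matU_{\matA\matR}$ and $\matU_{\matA,k}$ are $n\times k$ with orthonormal columns, so by the identity recorded in Section~\ref{subsec:Matrix-Perturbations} the quantity $d_{2}(\matU_{\matA\matR},\matU_{\matA,k})$ equals $\TNorm{\sin\Theta(\matA\matR,\matU_{\matA,k})}$, yielding the claim. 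The only place that requires any thought is the $\sin\to\tan$ step and the verification that dimensions match so that $d_2$ can be identified with $\sin(\theta_{\max})$; everything else is a direct appeal to the cited structural theorem.
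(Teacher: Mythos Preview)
Your proposal is correct and follows essentially the same route as the paper: both invoke Lemma~\ref{lem:11} for the rank of $\matA\matR$, convert the hypothesis $d_{2}(\matR,\matV_{\matA,k})\leq\nu(1+\nu^{2})^{-1/2}$ into $\TNorm{\tan\Theta(\matR,\matV_{\matA,k})}\leq\nu$, and then apply Theorem~\ref{thm:krylov_srtuctural} with $\TNorm{\Sigma_{\matA,k+}}=\sigma_{k+1}$ and $\TNorm{\Sigma_{\matA,k}^{-1}}=1/\sigma_{k}$. The only cosmetic difference is that the paper routes the final identification of $d_{2}(\matU_{\matA\matR},\matU_{\matA,k})$ with $\TNorm{\sin\Theta(\matA\matR,\matU_{\matA,k})}$ through Theorem~\ref{thm:projection_equality} and the equality $\TNorm{\matP_{\matA\matR}^{\perp}\matP_{\matU_{\matA,k}}}=\TNorm{(\matI-\matP_{\matA\matR})\matU_{\matA,k}}$, whereas you appeal directly to the $d_{2}=\sin(\theta_{\max})$ identity from Section~\ref{subsec:Matrix-Perturbations}.
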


\begin{proof}
Since $\rank{\matA}\ge k$ and $\nu(1+\nu^{2})^{-1/2}<1$, according
to Lemma \ref{lem:11} the matrix $\matA\matR$ has full rank. According
to Theorem~ \ref{thm:projection_equality} and the fact that $\matP_{\matA\matR}$
and $\matP_{\matU_{\matA,k}}$ are orthogonal projections we have
\begin{equation}
d_{2}\left(\matU_{\matA\matR},\matU_{\matA,k}\right)=d_{2}\left(\matA\matR,\matU_{\matA,k}\right)=\TNorm{\matP_{\matA\matR}-\matP_{\matU_{\matA,k}}}=\TNorm{\matP_{\matA\matR}^{\perp}\matP_{\matU_{\matA,k}}}\label{eq:projection_equality_k0}
\end{equation}
Combining Theorem \ref{thm:krylov_srtuctural} and Eq. (\ref{eq:projection_equality_k0}),
we bound:

\begin{eqnarray*}
d_{2}\left(\matA\matR,\matU_{\matA,k}\right) & = & \TNorm{\matP_{\matA\matR}^{\perp}\matP_{\matU_{\matA,k}}}\\
 & = & \TNorm{(\matI-\matP_{\matA\matR})\matU_{\matA,k}}\\
 & = & \TNorm{\sin\Theta(\matA\matR,\matU_{\matA,k})}\\
 & \leq & \TNorm{\mat{\Sigma}_{\matA.k+}}\cdot\TNorm{\mat{\Sigma}_{\matA.k}^{-1}}\cdot\TNorm{\tan\Theta(\matR,\matV_{\matA,k})}\\
 & = & \frac{\sigma_{k+1}}{\sigma_{k}}\cdot\TNorm{\tan\Theta(\matR,\matV_{\matA,k})}\\
 & \leq & \frac{\sigma_{k+1}}{\sigma_{k}}\nu
\end{eqnarray*}
where the last inequality follows from the fact that $\Theta(\matR,\matV_{\matA,k})$
is a diagonal matrix whose diagonal values are the inverse cosine
of the singular values of $\matR^{\T}\matV_{\matA,k}$, and these,
in turn, are all larger than $\sqrt{1-\nu^{2}(1+\nu^{2})^{-1}}.$ 
\end{proof}
We are now ready to prove Theorem~\ref{thm:structural}.
\begin{proof}
[Proof of Theorem~\ref{thm:structural}]We need to show both the additive
error bounds on the objective function, and the error bound on the
constraints. We start with the additive error bounds on the objective
function, both for PCP (first part of the theorem) and for PCR (second
part of the theorem). We have 
\[
\matA\x_{\matR,k}=\matA\matR\matV_{\matA\matR,k}\left(\matA\matR\matV_{\matA\matR,k}\right)^{+}\b=\matP_{\matU_{\mat A\matR,k}}\b
\]
and 
\[
\matA\x_{k}=\mat A\matV_{\matA,k}(\matA\matV_{\matA,k})^{+}\b=\matP_{\matU_{\mat A,k}}\b\,.
\]
Thus, 
\begin{eqnarray*}
\TNorm{\matA\x_{\matR,k}-\b} & = & \TNorm{\matA\x_{k}-\b+\matA\x_{\matR,k}-\matA\x_{k}}\\
 & = & \TNorm{\matA\x_{k}-\b}\pm\TNorm{\matA\x_{\matR,k}-\matA\x_{k}}\\
 & = & \TNorm{\matA\x_{k}-\b}\pm\TNorm{(\matP_{\matU_{\mat A\matR,k}}-\matP_{\matU_{\matA,k}})\b}\\
 & = & \TNorm{\matA\x_{k}-\b}\pm d_{2}(\matU_{\mat A\matR,k},\matU_{\mat A,k})\cdot\TNorm{\b}
\end{eqnarray*}
In the first part of the theorem, we have $d_{2}\left(\matU_{\mat A\matR,k},\matU_{\mat A,k}\right)\leq\nu$,
while the second part of the theorem we have $\matU_{\matA\matR,k}=\matU_{\matA\matR}$
(since $\matA\matR$ has $k$ columns) and Lemma~\ref{lem:14} ensures
that $d_{2}\left(\matU_{\mat A\matR,k},\matU_{\mat A,k}\right)\leq\nu\sigma_{k+1}/\sigma_{k}$.
Either way, the additive error bounds of the theorem are met.

We now bound the infeasibility of the approximate solution for the
PCP guarantee (first part of the theorem):
\begin{eqnarray*}
\TNorm{\matU_{\mat A,k+}^{\T}\matA\x_{\matR,k}} & = & \TNorm{\matU_{\mat A,k+}^{\T}\matA\x_{\matR,k}-\matU_{\mat A,k+}^{\T}\matA\x_{k}+\matU_{\mat A,k+}^{\T}\matA\x_{k}}\\
 & \leq & \TNorm{\matU_{\mat A,k+}^{\T}\left(\matA\x_{\matR,k}-\matA\x_{k}\right)}+\TNorm{\matU_{\mat A,k+}^{\T}\matA\x_{k}}\\
 & \leq & \TNorm{\matA\x_{\matR,k}-\matA\x_{k}}\\
 & \leq & d_{2}\left(\matU_{\mat A\matR,k},\matU_{\mat A,k}\right)\TNorm{\b}\\
 & \leq & \nu\TNorm{\b}
\end{eqnarray*}
where we used the fact that $\matA\x_{k}\in\range{\matU_{\matA,k}}$
so $\matU_{\mat A,k+}^{\T}\matA\x_{k}=0$.

We now bound the infeasibility of the approximate solution for the
PCR guarantee (second part of the theorem):

\begin{eqnarray*}
\TNorm{\matV_{\mat A,k+}^{\T}\x_{\matR}} & = & \TNorm{\matV_{\mat A,k+}^{\T}\matR(\matA\matR)^{+}\b}\\
 & \leq & \TNorm{\matV_{\mat A,k+}^{\T}\matR}\cdot\TNorm{(\matA\matR)^{+}}\cdot\TNorm{\b}\\
 & \leq & \frac{\nu}{\left(\sqrt{1-\nu^{2}}-\nu\right)\sigma_{k}}\TNorm{\b}
\end{eqnarray*}
where we used Lemma~\ref{lem:11} to bound $\TNorm{\matV_{\mat A,k+}^{\T}\matR}$
and $\TNorm{(\matA\matR)^{+}}$.
\end{proof}

\subsection{\label{subsec:statistical}Statistical Perspective}

We now consider $\x_{\matR,k}$ from a statistical perspective. We
use a similar framework to the one used in the literature to analyze
CLS~\cite{slawski2017compressed,Slawski17,THM17}. That is, we consider
a fixed design setting in which the rows of $\matA$, $\a_{1},\dots,\a_{n}\in\R^{d}$,
are considered as fixed, and $\b$'s entries, $b_{1},\dots,b_{n}\in\R$,
are 
\[
b_{i}=f_{i}+\xi_{i}
\]
where $f_{1},\dots,f_{n}$ are fixed values and the noise terms $\xi_{1},\dots,\xi_{n}$
are assumed to be independent random values with zero mean and $\sigma^{2}$
variance. We denote by $\f\in\R^{n}$ the vector whose $i$th entry
is $f_{i}$ . The goal is to recover $\f$ from $\b$ (i.e., de-noise
$\b$). 

The optimal predictor $\matA\x^{\star}$ of $\f$ given $\matA$ is
a minimizer of 
\[
\min_{\x\in\R^{d}}\Expect{\TNormS{\matA\x-\b}/n}
\]
where here, and in subsequent expressions, the expectation is with
respect to the noise $\xi$ (if there are multiple minimizers, $\x^{\star}$
is the minimizer with minimum norm). It is easy to verify that $\matA\x^{\star}=\matP_{\matA}\f$.

Given an estimator $\theta=\theta(\matA,\b)$ of $\x^{\star}$ (which
we assume is a random variable since $\b$ is a random varaible),
its \emph{excess risk} is define as 
\[
{\cal E}(\theta)\coloneqq\Expect{\TNormS{\matA\theta-\matA\x^{\star}}/n}\,.
\]
The \emph{ordinary least square estimator }(OLS) $\hat{\x}$ is simply
a solution to $\min_{\x\in\R^{d}}\TNorm{\matA\x-\b}$: $\hat{\x}\coloneqq\matA^{+}\b$.
Simple calculations show that 
\[
{\cal E}(\hat{\x})=\sigma^{2}\rank{\matA}/n\,.
\]
Thus, if the rank of $\matA$ is large, which is usually the case
when $d\gg n$, then the excess risk might be large (and it does not
asymptotically converge to $0$ if $\rank{\matA}=\Omega(n)$). This
motivates the use of regularization (e.g., PCR). Indeed, the excess
risk of the PCR estimator $\x_{k}$ can be bounded~\cite{Slawski17}:
\begin{equation}
{\cal E}(\x_{k})\leq\frac{\InfNormS{\matV_{\matA}^{\T}\x^{\star}}\cdot\sum_{i=k+1}^{\min(n,d)}\sigma_{i}^{2}}{n}+\frac{\sigma^{2}k}{n}\,.\label{eq:pcr-old-risk-bound}
\end{equation}
In many scenarios, $\x_{k}$ has a significantly reduced excess risk
in comparison to the excess risk of $\hat{\x}$ (see~\cite{Slawski17}
for a discussion). This motivates the use of PCR when $d$ is large.

In this section, we analyze the excess risk of $\x_{\matR,k}$ based
on properties of $\matR$. The bounds are based on the following identity~\cite{Slawski17}\footnote{However, no proof of (\ref{eq:bias-var}) appears in~\cite{Slawski17},
so for completeness we include a proof in the appendix. }: for any $\matM$ of appropriate size
\begin{equation}
{\cal E}(\x_{\matM})={\cal E}(\matM(\matA\matM)^{\pinv}\b)=\underset{{\cal B}(\x_{\matM})}{\underbrace{\frac{1}{n}\TNormS{(\matI-\matP_{\matA\matM})\matA\x^{\star}}}}+\underset{{\cal V}(\x_{\matM})}{\underbrace{\sigma^{2}\frac{\rank{\matA\matM}}{n}}}\,.\label{eq:bias-var}
\end{equation}
In the above, ${\cal B}(\x_{\matM})$ can be viewed as a bias term,
and ${\cal V}(\x_{\matM})$ can be viewed as a variance term. Eq.~(\ref{eq:pcr-old-risk-bound})
is obtained by bounding the bias term ${\cal B}(\x_{k})$, although
our results lead to a bound on ${\cal E}(\x_{k})$ that is tighter
in some cases (Corollary~\ref{cor:new-pcr-risk}). An immediate corollary
of~(\ref{eq:bias-var}) is the following bound for $\x_{\matR,k}$:
\begin{equation}
{\cal E}(\x_{\matR,k})=\frac{1}{n}\TNormS{(\matI-\matP_{\matA\matR\matV_{\matA\matR,k}})\matA\x^{\star}}+\frac{\sigma^{2}k}{n}\,.\label{eq:pcp_risk}
\end{equation}

The following results addresses the case where $\matR$ has $k$ orthonormal
columns. The conditions are the same as the first part of Theorem~\ref{thm:structural}
(optimization perspective analysis). 
\begin{thm}
\label{thm:stat-structural}Assume that $\rank{\matA}\geq k$. Suppose
that $\matR\in\R^{d\times k}$ has orthonormal columns, and that $d_{2}(\matR,\matV_{\matA,k})\leq\nu(1+\nu^{2})^{-1/2}<1$.
Then,
\[
{\cal E}(\x_{\matR})\leq\frac{\left(1+\nu\right)\cdot\TNormS{\x^{\star}}\cdot\sigma_{k+1}^{2}}{n}+\frac{\sigma^{2}k}{n}
\]
\end{thm}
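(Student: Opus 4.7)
The plan is to apply the bias--variance identity~(\ref{eq:pcp_risk}) to the rank-$k$ sketch $\matR$. Because $\matR$ has $k$ orthonormal columns and $d_{2}(\matR,\matV_{\matA,k})<1$, Lemma~\ref{lem:11} gives $\rank{\matA\matR}=k$, so $\matA\matR\matV_{\matA\matR,k}$ and $\matA\matR$ span the same $k$-dimensional subspace and $\matP_{\matA\matR\matV_{\matA\matR,k}}=\matP_{\matA\matR}$. This immediately produces the variance term $\sigma^{2}k/n$ and reduces the proof to establishing the bias bound $\TNormS{(\matI-\matP_{\matA\matR})\matA\x^{\star}}\leq(1+\nu)\sigma_{k+1}^{2}\TNormS{\x^{\star}}$.

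For the bias I exploit that $\matP_{\matA\matR}\matA\x^{\star}$ is the element of $\range{\matA\matR}$ closest to $\matA\x^{\star}$, so $\TNormS{(\matI-\matP_{\matA\matR})\matA\x^{\star}}\leq\TNormS{\matA\x^{\star}-\matA\matR\y}$ for every $\y\in\R^{k}$. The key step is choosing $\y$ to cancel the leading left-singular block. Set $\matZ_{1}=\matV_{\matA,k}^{\T}\matR$ and $\matZ_{2}=\matV_{\matA,k+}^{\T}\matR$; the argument in the proof of Lemma~\ref{lem:11} shows $\sigma_{\min}(\matZ_{1})>0$, so $\matZ_{1}^{-1}$ exists. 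Take $\y=\matZ_{1}^{-1}\matV_{\matA,k}^{\T}\x^{\star}$. Using the decomposition $\matA\matR=\matU_{\matA,k}\Sigma_{\matA,k}\matZ_{1}+\matU_{\matA,k+}\Sigma_{\matA,k+}\matZ_{2}$ together with the corresponding SVD expansion of $\matA\x^{\star}$, the top block cancels exactly, yielding
\[
\matA\x^{\star}-\matA\matR\y=\matU_{\matA,k+}\Sigma_{\matA,k+}\bigl(\matV_{\matA,k+}^{\T}\x^{\star}-\matZ_{2}\matZ_{1}^{-1}\matV_{\matA,k}^{\T}\x^{\star}\bigr).
\]
Since $\matU_{\matA,k+}$ has orthonormal columns and $\TNorm{\Sigma_{\matA,k+}}=\sigma_{k+1}$, this gives $\TNormS{(\matI-\matP_{\matA\matR})\matA\x^{\star}}\leq\sigma_{k+1}^{2}\TNormS{\matV_{\matA,k+}^{\T}\x^{\star}-\matZ_{2}\matZ_{1}^{-1}\matV_{\matA,k}^{\T}\x^{\star}}$.

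To close, express the residual as $\matT\matV_{\matA}^{\T}\x^{\star}$ with $\matT=[-\matZ_{2}\matZ_{1}^{-1},\,\matI]$. A direct computation yields $\matT\matT^{\T}=\matI+\matZ_{2}\matZ_{1}^{-1}(\matZ_{2}\matZ_{1}^{-1})^{\T}$, so $\TNormS{\matT}=1+\TNormS{\matZ_{2}\matZ_{1}^{-1}}$. By Lemma~\ref{lem:tan-to-spectral}, $\TNorm{\matZ_{2}\matZ_{1}^{-1}}=\TNorm{\tan\Theta(\matR,\matV_{\matA,k})}$, and the hypothesis $d_{2}(\matR,\matV_{\matA,k})\leq\nu(1+\nu^{2})^{-1/2}$ translates via $\tan=\sin/\cos$ into $\TNorm{\tan\Theta(\matR,\matV_{\matA,k})}\leq\nu$. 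Hence $\TNormS{\matT}\leq1+\nu^{2}\leq1+\nu$ in the intended regime $\nu\leq1$, while $\TNormS{\matV_{\matA}^{\T}\x^{\star}}\leq\TNormS{\x^{\star}}$, delivering the bias bound.

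The main obstacle is the choice of $\y$: the naive $\y=\matR^{\T}\x^{\star}$ produces a residual $\matA(\matI-\matR\matR^{\T})\x^{\star}$ whose operator-norm bound pulls in a $\sigma_{1}$ factor rather than $\sigma_{k+1}$ (essentially because $\matV_{\matA,k}^{\T}(\matI-\matR\matR^{\T})$ is small but $\matA\matV_{\matA,k}$ has norm $\sigma_{1}$), wrecking the desired scaling. Only with $\y=\matZ_{1}^{-1}\matV_{\matA,k}^{\T}\x^{\star}$ do the top-$k$ singular components of $\matA\x^{\star}$ and $\matA\matR\y$ coincide, confining the residual to the trailing subspace where the operator norm of $\Sigma_{\matA,k+}$ is exactly $\sigma_{k+1}$.
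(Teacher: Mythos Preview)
Your proof is correct and follows essentially the same route as the paper's: both use the bias--variance identity, both reduce the bias term to $\sigma_{k+1}^{2}\bigl(1+\TNormS{\matZ_{2}\matZ_{1}^{-1}}\bigr)\TNormS{\x^{\star}}$, and both finish by bounding $\TNorm{\matZ_{2}\matZ_{1}^{-1}}\leq\nu$ via the principal-angle hypothesis.

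The one genuine difference is how that intermediate bound is obtained. The paper first passes to the operator norm, $\TNormS{(\matI-\matP_{\matA\matR})\matA\x^{\star}}\leq\TNormS{\x^{\star}}\cdot\TNormS{(\matI-\matP_{\matA\matR})\matA}$, and then invokes Theorem~\ref{thm:hmt_structural} (Halko--Martinsson--Tropp, Theorem~9.1) as a black box to get $\TNormS{(\matI-\matP_{\matA\matR})\matA}\leq\TNormS{\Sigma_{\matA,k+}}+\TNormS{\Sigma_{\matA,k+}\matZ_{2}\matZ_{1}^{+}}$. You instead exploit the best-approximation property of the projection and explicitly pick the test vector $\y=\matZ_{1}^{-1}\matV_{\matA,k}^{\T}\x^{\star}$ that cancels the leading block, arriving at the same expression without the external reference. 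Your argument is thus a self-contained derivation of exactly the piece of the Halko--Martinsson--Tropp bound that is needed here; it buys independence from that citation at the cost of a short extra computation, while the paper's version is terser but leans on an imported structural result.
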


For the proof, we need the following theorem due to Halko et al.~\cite{halko2011finding}.
\begin{thm}
[Theorem 9.1 in \cite{halko2011finding}]\label{thm:hmt_structural}
Let $\matA$ be an $m\times n$ matrix with singular value decomposition
$\matA=\matU_{\matA}\mat{\Sigma}_{\matA}\matV_{\matA}^{\matT}$ .Let
$k\ge0$. Let $\matR$ be any matrix such that $\matV_{\matA,k}^{\matT}\matR$
has full row rank. Then we have
\[
\TNormS{(\matI_{m}-\matP_{\matA\matR})\matA}\leq\TNormS{\mat{\Sigma}_{\matA,k+}}+\TNormS{\mat{\Sigma}_{\matA,k+}\matV_{\matA,k+}^{\T}\matR\left(\matV_{\matA,k}^{\T}\matR\right)^{+}}
\]
\end{thm}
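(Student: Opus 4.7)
The plan is to prove the HMT structural bound by exhibiting an explicit matrix $\matY$ whose columns lie in $\range{\matA\matR}$ and which approximates $\matA$ well, then exploiting the contractivity of orthogonal projection. The starting observation is that if the columns of $\matY$ lie in $\range{\matA\matR}$, then $\matP_{\matA\matR}\matY=\matY$, so $(\matI_{m}-\matP_{\matA\matR})\matA=(\matI_{m}-\matP_{\matA\matR})(\matA-\matY)$, and since $\matI_{m}-\matP_{\matA\matR}$ is an orthogonal projection this immediately gives $\TNorm{(\matI_{m}-\matP_{\matA\matR})\matA}\leq\TNorm{\matA-\matY}$. The entire task thus reduces to choosing a good $\matY$ and controlling $\TNorm{\matA-\matY}$.

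The natural choice will be $\matY\coloneqq\matA\matR\matR_{1}^{\pinv}\matV_{\matA,k}^{\T}$, where $\matR_{1}\coloneqq\matV_{\matA,k}^{\T}\matR$ and $\matR_{2}\coloneqq\matV_{\matA,k+}^{\T}\matR$. The full row rank hypothesis on $\matV_{\matA,k}^{\T}\matR$ ensures $\matR_{1}\matR_{1}^{\pinv}=\matI_{k}$. Expanding $\matA\matR=\matU_{\matA,k}\Sigma_{\matA,k}\matR_{1}+\matU_{\matA,k+}\Sigma_{\matA,k+}\matR_{2}$ via the SVD block decomposition and using this cancellation, I expect a short calculation to yield
\[
\matA-\matY=\matU_{\matA,k+}\Sigma_{\matA,k+}\begin{bmatrix}-\matR_{2}\matR_{1}^{\pinv} & \matI\end{bmatrix}\matV_{\matA}^{\T}.
\]
Since $\matU_{\matA,k+}$ has orthonormal columns and $\matV_{\matA}^{\T}$ has orthonormal rows, both outer factors can be stripped without changing the spectral norm.

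What remains is to bound $\TNormS{\Sigma_{\matA,k+}[-\matR_{2}\matR_{1}^{\pinv},\matI]}$. Rewriting the squared spectral norm as the spectral norm of $\matM\matM^{\T}$ gives a sum of two positive-semidefinite matrices, $\Sigma_{\matA,k+}^{2}+\Sigma_{\matA,k+}\matR_{2}\matR_{1}^{\pinv}(\matR_{2}\matR_{1}^{\pinv})^{\T}\Sigma_{\matA,k+}$, and then the triangle inequality for the spectral norm separates the two contributions into $\TNormS{\Sigma_{\matA,k+}}+\TNormS{\Sigma_{\matA,k+}\matR_{2}\matR_{1}^{\pinv}}$. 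Substituting $\matR_{1}$ and $\matR_{2}$ back in yields exactly the inequality in the theorem.

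The main obstacle is identifying the right $\matY$: it must live in $\range{\matA\matR}$ while being structured enough for $\matA-\matY$ to simplify via the SVD block decomposition. The motivation for $\matA\matR\matR_{1}^{\pinv}\matV_{\matA,k}^{\T}$ is that it uses $\matR_{1}^{\pinv}$ to ``undo'' the action of $\matR_{1}$ on the dominant block, making $\matY$ reproduce $\matU_{\matA,k}\Sigma_{\matA,k}\matV_{\matA,k}^{\T}$ exactly and leaving only a tail-subspace residual that the triangle inequality cleanly decouples into the two terms appearing in the statement.
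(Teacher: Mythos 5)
The paper does not prove this statement at all: it is imported verbatim as Theorem 9.1 of Halko, Martinsson and Tropp, so there is no internal proof to compare against. Your reconstruction is correct and is essentially the standard argument behind that theorem, specialized to the spectral norm. Every step checks out: $\matY=\matA\matR(\matV_{\matA,k}^{\T}\matR)^{\pinv}\matV_{\matA,k}^{\T}$ has $\range{\matY}\subseteq\range{\matA\matR}$, full row rank of $\matR_{1}$ gives $\matR_{1}\matR_{1}^{\pinv}=\matI_{k}$ so the dominant block is reproduced exactly, the residual is $\matU_{\matA,k+}\Sigma_{\matA,k+}\left[-\matR_{2}\matR_{1}^{\pinv}\;\;\matI\right]\matV_{\matA}^{\T}$, and the orthonormal outer factors strip off cleanly (on the right because $\matV_{\matA}^{\T}$ has orthonormal rows, so $\matM\matV_{\matA}^{\T}\matV_{\matA}\matM^{\T}=\matM\matM^{\T}$). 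The final step $\TNormS{\matM}=\TNorm{\matM\matM^{\T}}$ followed by the triangle inequality on the two positive semidefinite summands delivers exactly the stated bound. The only difference from the original HMT proof is that they work with a flattened surrogate $\matZ$ of block form $[\matI;\matF]$ and a separate projection proposition so as to cover all unitarily invariant norms (spectral and Frobenius simultaneously); your direct surrogate-plus-triangle-inequality route is simpler and entirely adequate for the spectral-norm version that this paper actually uses.
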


\begin{proof}
[Proof of Theorem~\ref{thm:stat-structural}]The condition that $d_{2}(\matR,\matV_{\matA,k})\leq\nu(1+\nu^{2})^{-1/2}<1$
ensures that $\matV_{\matA,k}^{\T}\matR$ has full rank, and that
$\TNormS{\tan\Theta(\matR,\matV_{\matA,k})}\leq\nu$ (since $\Theta(\matR,\matV_{\matA,k})$
is a diagonal matrix whose diagonal values are the inverse cosine
of the singular values of $\matR^{\T}\matV$, and these, in turn,
are all larger than $\sqrt{1-\nu^{2}(1+\nu^{2})^{-1}}$). Thus we
have,

\begin{eqnarray*}
{\cal B}(\x_{\matR}) & = & \frac{1}{n}\TNormS{(\matI-\matP_{\matA\matR})\matA\x^{\star}}\\
 & \leq & \frac{1}{n}\TNormS{\x^{\star}}\cdot\left(\TNormS{\mat{\Sigma}_{\matA,k+}}+\TNormS{\mat{\Sigma}_{\matA,k+}\matV_{\matA,k+}^{\matT}\matR\left(\matV_{\matA,k}^{\matT}\matR\right)^{+}}\right)\\
 & \leq & \frac{1}{n}\TNormS{\x^{\star}}\left(\sigma_{k+1}^{2}+\sigma_{k+1}^{2}\TNormS{\matV_{\matA,k+}^{\matT}\matR\left(\matV_{\matA,k}^{\matT}\matR\right)^{+}}\right)\\
 & = & \frac{1}{n}\TNormS{\x^{\star}}\left(\sigma_{k+1}^{2}+\sigma_{k+1}^{2}\TNormS{\tan\Theta\left(\matR,\matV_{\matA,k}\right)}\right)\\
 & \leq & \frac{\left(1+\nu\right)\cdot\TNormS{\x^{\star}}\cdot\sigma_{k+1}^{2}}{n}
\end{eqnarray*}
where in the first inequality we used Theorem~\ref{thm:hmt_structural}
and for the second equality we used Lemma~\ref{lem:tan-to-spectral}.
The result now follows from the fact that $\rank{\matA\matR}\leq k$.
\end{proof}
\begin{cor}
\label{cor:new-pcr-risk}For the PCR solution $\x_{k}$ we have
\[
{\cal E}(\x_{k})\leq\frac{\TNormS{\x^{\star}}\cdot\sigma_{k+1}^{2}}{n}+\frac{\sigma^{2}k}{n}\,.
\]
\end{cor}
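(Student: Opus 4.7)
The plan is to derive this corollary as the $\nu = 0$ specialization of Theorem~\ref{thm:stat-structural}. Observe that the PCR solution can be written in the form $\x_{\matR}$ of Eq.~(\ref{eq:cls}) by choosing $\matR = \matV_{\matA,k}$: indeed, with this choice $\matR$ has $k$ orthonormal columns and
\[
\x_{\matV_{\matA,k}} = \matV_{\matA,k}(\matA \matV_{\matA,k})^{+}\b = \x_{k}.
\]

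With $\matR = \matV_{\matA,k}$, the two subspaces coincide, so $d_{2}(\matR,\matV_{\matA,k}) = 0$. Hence the hypothesis $d_{2}(\matR,\matV_{\matA,k}) \leq \nu(1+\nu^{2})^{-1/2} < 1$ of Theorem~\ref{thm:stat-structural} is satisfied with $\nu = 0$ (since $0 \le 0 < 1$).

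Plugging $\nu = 0$ into the conclusion of Theorem~\ref{thm:stat-structural} immediately gives
\[
{\cal E}(\x_{k}) = {\cal E}(\x_{\matV_{\matA,k}}) \leq \frac{(1+0)\cdot \TNormS{\x^{\star}}\cdot \sigma_{k+1}^{2}}{n} + \frac{\sigma^{2}k}{n},
\]
which is the claimed bound. There is no real obstacle here; the only thing to verify is that the rank assumption $\rank{\matA} \ge k$ holds, and this is part of the standing PCR assumption ($k$ is at most the rank of $\matA$, and there is an eigengap at $k$). Thus the corollary follows with essentially no additional work beyond recognizing $\x_{k}$ as the $\matR = \matV_{\matA,k}$ instance of~(\ref{eq:cls}).
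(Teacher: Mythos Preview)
Your proposal is correct and matches the paper's intended argument: the corollary is stated without proof immediately after Theorem~\ref{thm:stat-structural}, and follows exactly by taking $\matR = \matV_{\matA,k}$ (so $d_{2}(\matR,\matV_{\matA,k})=0$) and $\nu = 0$. The only point worth noting is that the standing PCR assumption $k \le \rank{\matA}$ indeed supplies the $\rank{\matA}\ge k$ hypothesis, as you observe.
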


Next, we consider the general case where $\matR$ does not necessarily
have orthonormal columns, and potentially has more than $k$ columns.
The conditions are the same as the second part of Theorem~\ref{thm:structural}
(optimization perspective). 
\begin{thm}
\label{thm:struct-stat-pcp}Suppose that $\matR\in\R^{d\times s}$
where $s\ge k$. Assume that $\rank{\matA\matR}\geq k$. If $d_{2}\left(\matU_{\mat A\matR,k},\matU_{\mat A,k}\right)\leq\nu<1$
then,
\[
{\cal E}(\x_{\matR,k})\leq{\cal E}(\x_{k})+\frac{(2\nu+\nu^{2})\TNormS{\f}}{n}\,.
\]
\end{thm}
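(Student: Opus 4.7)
My plan is to reduce the theorem to a direct estimate on the bias terms in the decomposition (\ref{eq:bias-var}), exploiting the fact that the variance terms of $\x_{\matR,k}$ and $\x_k$ are identical under the hypothesis $\rank{\matA\matR}\ge k$.

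First I would apply the identity (\ref{eq:bias-var}) to both $\x_{\matR,k}$ and $\x_k$. For $\x_k$ this uses $\matM=\matV_{\matA,k}$, giving $\matP_{\matA\matV_{\matA,k}}=\matP_{\matU_{\matA,k}}$ and $\rank{\matA\matV_{\matA,k}}=k$. For $\x_{\matR,k}$ this uses $\matM=\matR\matV_{\matA\matR,k}$, giving $\matP_{\matA\matR\matV_{\matA\matR,k}}=\matP_{\matU_{\matA\matR,k}}$; the assumption $\rank{\matA\matR}\ge k$ together with $d_2(\matU_{\matA\matR,k},\matU_{\matA,k})<1$ (via Theorem~\ref{thm:projection_equality}(3) applied to the $k$-dimensional subspaces $\matU_{\matA\matR,k}$ and $\matU_{\matA,k}$) ensures the rank of the relevant product is exactly $k$. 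Consequently both variance terms equal $\sigma^2 k/n$ and cancel, and the problem reduces to bounding
\[
{\cal B}(\x_{\matR,k})-{\cal B}(\x_k)=\frac{1}{n}\left(\TNormS{(\matI-\matP_{\matU_{\matA\matR,k}})\matA\x^\star}-\TNormS{(\matI-\matP_{\matU_{\matA,k}})\matA\x^\star}\right).
\]

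Next I would denote $\v=\matA\x^\star=\matP_{\matA}\f$ and write
\[
(\matI-\matP_{\matU_{\matA\matR,k}})\v=(\matI-\matP_{\matU_{\matA,k}})\v+(\matP_{\matU_{\matA,k}}-\matP_{\matU_{\matA\matR,k}})\v.
\]
The triangle inequality combined with the hypothesis yields
\[
\TNorm{(\matP_{\matU_{\matA,k}}-\matP_{\matU_{\matA\matR,k}})\v}\le d_2(\matU_{\matA\matR,k},\matU_{\matA,k})\cdot\TNorm{\v}\le\nu\TNorm{\f},
\]
where I used $\TNorm{\v}=\TNorm{\matP_\matA\f}\le\TNorm{\f}$.

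Finally I would square the triangle inequality, expand, and absorb the cross term by the same crude bound $\TNorm{(\matI-\matP_{\matU_{\matA,k}})\v}\le\TNorm{\v}\le\TNorm{\f}$, giving
\[
\TNormS{(\matI-\matP_{\matU_{\matA\matR,k}})\v}\le\TNormS{(\matI-\matP_{\matU_{\matA,k}})\v}+(2\nu+\nu^2)\TNormS{\f}.
\]
Dividing by $n$ and combining with the cancellation of variance terms produces the stated bound. There is no real obstacle here; the only subtlety is the quadratic blow-up of the cross term, which is controlled by the naive bound on $\TNorm{\v}$ and produces the $2\nu+\nu^2$ factor rather than a cleaner $\nu$.
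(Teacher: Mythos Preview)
Your proposal is correct and follows essentially the same decomposition as the paper: both identify $\matP_{\matA\matR\matV_{\matA\matR,k}}=\matP_{\matU_{\matA\matR,k}}$, observe that the variance terms coincide (both equal $\sigma^2 k/n$), and split the bias difference via $(\matI-\matP_{\matU_{\matA\matR,k}})=(\matI-\matP_{\matU_{\matA,k}})+(\matP_{\matU_{\matA,k}}-\matP_{\matU_{\matA\matR,k}})$. The one minor difference is in how the cross term is handled: the paper simplifies it algebraically to $\f^{\T}\matP_{\matU_{\matA,k}}^{\perp}\matP_{\matU_{\matA\matR,k}}\f$ and then invokes Theorem~\ref{thm:projection_equality} to bound $\TNorm{\matP_{\matU_{\matA,k}}^{\perp}\matP_{\matU_{\matA\matR,k}}}$ by $\nu$, whereas you bound the cross term directly by Cauchy--Schwarz and the crude estimate $\TNorm{(\matI-\matP_{\matU_{\matA,k}})\v}\le\TNorm{\f}$. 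Both routes yield exactly the same $2\nu\TNormS{\f}$ contribution, so the final bound is identical; your treatment is slightly more elementary since it avoids the projection identity. (A small aside: the rank of $\matA\matR\matV_{\matA\matR,k}$ being $k$ follows directly from $\rank{\matA\matR}\ge k$, so your appeal to Theorem~\ref{thm:projection_equality}(3) there is unnecessary, though harmless.)
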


\begin{proof}
Since $\matA\matR$ has rank at least $k$, we have $\matP_{\matA\matR\matV_{\matA\matR,k}}=\matP_{\matU_{\matA\matR,k}}$.
From (\ref{eq:pcp_risk}), the fact that $\matA\x^{\star}=\matP_{\matA}\f$,
and $\matP_{\matA\matR\matV_{\matA\matR,k}}\matP_{\matA}=\matP_{\matA\matR\matV_{\matA\matR,k}}$
(since the range of $\matA\matR\matV_{\matA\matR,k}$ is contained
in the range of $\matA$) we have

\begin{eqnarray*}
{\cal B}(\x_{\matR,k}) & = & \frac{1}{n}\TNormS{(\matI-\matP_{\matA\matR\matV_{\matA\matR,k}})\matA\x^{\star}}\\
 & = & \frac{1}{n}\TNormS{(\matP_{\matA}-\matP_{\matA\matR\matV_{\matA\matR,k}})\f}\\
 & = & \frac{1}{n}\TNormS{(\matP_{\matA}-\matP_{\matU_{\matA,k}}+\matP_{\matU_{\matA,k}}-\matP_{\matA\matR\matV_{\matA\matR,k}})\f}\\
 & = & \frac{1}{n}\left(\TNormS{(\matP_{\matA}-\matP_{\matU_{\matA,k}})\f}+\TNormS{(\matP_{\matU_{\matA,k}}-\matP_{\matU_{\matA\matR,k}})\f}+2(\matP_{\matA}\f-\matP_{\matU_{\matA,k}}\f)^{\T}(\matP_{\matU_{\matA,k}}\f-\matP_{\matA\matR\matV_{\matA\matR,k}}\f)\right)\\
 & \leq & {\cal B}(\x_{k})+d_{2}\left(\matU_{\mat A\matR,k},\matU_{\mat A,k}\right)^{2}\frac{\TNormS{\f}}{n}+\frac{2}{n}\left|\f^{\T}(\matP_{\matA}-\matP_{\matU_{\matA,k}})^{\T}(\matP_{\matU_{\matA,k}}-\matP_{\matA\matR\matV_{\matA\matR,k}})\f\right|
\end{eqnarray*}
For the cross-terms, we bound

\begin{eqnarray*}
\left|\f^{\T}(\matP_{\matA}-\matP_{\matU_{\matA,k}})^{\T}(\matP_{\matU_{\matA,k}}-\matP_{\matA\matR\matV_{\matA\matR,k}})\f\right| & = & \left|\f^{\T}\left(\matP_{\matA}\matP_{\matU_{\matA,k}}-\matP_{\matA}\matP_{\matA\matR\matV_{\matA\matR,k}}-\matP_{\matU_{\matA,k}}\matP_{\matU_{\matA,k}}+\matP_{\matU_{\matA,k}}\matP_{\matA\matR\matV_{\matA\matR,k}}\right)\f\right|\\
 & = & \left|\f^{\T}\left(\matP_{\matU_{\matA,k}}-\matP_{\matU_{\matA\matR,k}}-\matP_{\matU_{\matA,k}}+\matP_{\matU_{\matA,k}}\matP_{\matU_{\matA\matR,k}}\right)\f\right|\\
 & = & \f^{\T}\left(\matI-\matP_{\matU_{\matA,k}}\right)\matP_{\matU_{\matA\matR,k}}\f\\
 & = & \f^{\T}\matP_{\matU_{\matA,k}}^{\perp}\matP_{\matU_{\matA\matR,k}}\f\\
 & \leq & \TNorm{\matP_{\matU_{\matA,k}}^{\perp}\matP_{\matU_{\matA\matR,k}}}\cdot\TNormS{\f}
\end{eqnarray*}
Since both $\matU_{\mat A\matR,k}$ and $\matU_{\mat A,k}$ are full
rank,\textcolor{red}{{} }we have (Theorem~\ref{thm:projection_equality})
\[
\TNorm{\matP_{\matU_{\matA,k}}^{\perp}\matP_{\matU_{\matA\matR,k}}}=\TNorm{\matP_{\matU_{\matA,k}}-\matP_{\matU_{\matA\matR,k}}}=d_{2}\left(\matU_{\mat A\matR,k},\matU_{\mat A,k}\right)
\]
Thus, we find that 
\[
{\cal B}(\x_{\matR,k})\leq{\cal B}(\x_{k})+(2\nu+\nu^{2})\frac{\TNormS{\f}}{n}\,.
\]
We reach the bound in the theorem statement by adding the variance
${\cal V}(\x_{\matR,k})$, which is equal to the variance of $\x_{k}$
because the ranks are equal. 
\end{proof}

\paragraph{Discussion. }

Theorem~\ref{thm:stat-structural} shows that if $\matR$ is a good
approximation to $\matV_{\matA,k}$, then there is a small relative
increase to the bias term, while the variance term does not change.
Since we are mainly interested in keeping the asymptotic behavior
of the excess risk (as $n$ goes to infinity), a fixed $\nu$ of modest
value suffices. However, for this result to hold, $\matR$ has to
have exactly $k$ columns and those columns should be orthonormal.
Without these restrictions, we need to resort to Theorem~\ref{thm:struct-stat-pcp}.
In that theorem, we get (if the conditions are met) only an additive
increase in the bias term. Thus if, for example, $\TNormS{\f}/n\to c$
as $n\to\infty$ for some constant $c$, then $\nu$ should tend to
$0$ as $n$ goes to infinity, but a constant value should suffice
if $n$ is fixed. 

\section{\label{sec:alg}Sketched PCR and PCP}

In the previous section, we considered general conditions on $\matR$
which ensure that $\x_{\matR,k}$ is an approximate solution to the
PCR/PCP problem. In this section, we propose algorithms to generate
$\matR$ for which these conditions hold. The main technique we employ
is \emph{matrix sketching}. The idea is to first multiply the data
matrix $\matA$ by some random transformation (e.g., a random projection),
and extract an approximate subspace from the compressed matrix. 

\subsection{Dimensionality Reduction using Sketching }

The compression (multiplication by a random matrix) alluded in the
previous paragraph can be applied either from the left side, or the
right side, or both. In left sketching, which is more appropriate
if the input matrix has many rows and a modest amount of columns,
we propose to use $\matR=\matV_{\matS\matA,k}$ where $\matS$ is
some sketching matrix (we discuss a couple of options shortly). In
right sketching, which is more appropriate if the input matrix has
many columns and a modest amount of rows, we propose to use $\matR=\matG^{\T}$
where $\matG$ is some sketching matrix. Two sided sketching, $\matR=\matG^{\T}\matV_{\matS\matA\matG^{\T},k}$
, is aimed for the case that the number of columns and the number
of rows are large.

The sketching matrices, $\matS$ and $\matG$, are randomized dimensionality
reduction transformations. Quite a few sketching transforms have been
proposed in the literature in recent years. For concreteness, we consider
two specific cases, though our results hold for other sketching transformations
as well (albeit some modifications in the bounds might be necessary).
The first, which we refer to as 'subgaussian map', is a random matrix
in which every entry of the matrix is sampled i.i.d from some subgaussian
distribution (e.g. $N(0,1)$) and the matrix is appropriately scaled
(however, scaling is not necessary in our case). The second transform
is a sparse embedding matrix, in which each column is sampled uniformly
and independently from the set of scaled identity vectors and multiplied
by a random sign. We refer to such a matrix as a \noun{CountSketch}
matrix~\cite{charikar2002finding,woodruff2014sketching}. 

Both transformations described above, and a few other, have, provided
enough rows are used, with high probability the following property
which we refer to as \emph{approximate Gram property}. 
\begin{defn}
Let $\matX\in\R^{m\times n}$ be a fixed matrix. For $\epsilon,\delta\in(0,1/2)$,
a distribution ${\cal D}$ on matrices with $m$ columns has the \emph{$(\epsilon,\delta)$-approximate
Gram matrix} property for $\matX$ if 
\[
\Pr_{\matS\sim{\cal D}}\left(\TNorm{\matX^{\T}\matS^{\T}\matS\matX-\matX^{\T}\matX}\geq\epsilon\TNormS{\matX}\right)\leq\delta\,.
\]
\end{defn}

Recent results by Cohen et al.~\cite{cohen2015optimal}\footnote{Theorem 1 in \cite{cohen2015optimal} with $k=\sr{\matX}.$}
show that when $\matS$ has independent subgaussian entries, then
as long as the number of rows in $\matS$ is $\Omega((\sr{\matX}+\log(1/\delta))/\epsilon^{2})$
then we have $(\epsilon,\delta)$-approximate Gram property for $\matX$.
If $\matS$ is a \noun{CountSketch} matrix, then as long as the number
of rows in $\matS$ is $\Omega(\sr{\matX}^{2}/(\epsilon^{2}\delta))$
then we have $(\epsilon,\delta)$-approximate Gram property for $\matX$~\cite{cohen2015optimal}.

We first describe our results for the various modes of sketching,
and then discuss algorithmic issues and computational complexity. 
\begin{thm}
[Left Sketching] Let $\nu,\delta\in(0,1/2)$ and denote 
\[
\epsilon=\frac{\nu(1+\nu^{2})^{-1/2}}{1+\nu(1+\nu^{2})^{-1/2}}\cdot\gap{\matA}k.
\]
Suppose that $\matS$ is sampled from a distribution that provides
a $(\epsilon,\delta)$-approximate Gram matrix for $\matA$. Then
for $\matR=\matV_{\matS\matA,k}$, with probability $1-\delta$, the
approximate solution $\x_{\matR}$ is a $\left(\frac{\sigma_{k+1}}{\sigma_{k}}\nu,\frac{\nu}{\left(\sqrt{1-\nu^{2}}-\nu\right)\sigma_{k}}\right)$-approximate
PCR and 
\[
{\cal E}(\x_{\matR})\leq\frac{\left(1+\nu\right)\cdot\TNormS{\x^{\star}}\cdot\sigma_{k+1}^{2}}{n}+\frac{\sigma^{2}k}{n}
\]

Thus if, for example, $\matS$ is a \noun{CountSketch} matrix, then
the conditions are met when the number of rows in $\matS$ is 
\[
\Omega\left(\frac{\sr{\matA}^{2}}{\gap{\matA}k^{2}\nu^{2}\delta}\right)
\]
 rows. In another example, if $\matS$ is a subgaussian map, then
the conditions are met when the number of rows in $\matS$ is 
\[
\Omega\left(\frac{\sr{\matA}+\log(1/\delta)}{\gap{\matA}k^{2}\nu^{2}}\right)\,.
\]
\end{thm}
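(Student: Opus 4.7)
The plan is to reduce the theorem to the two structural results already established: Theorem~\ref{thm:structural}(\ref{enu:structural_k_equal_s}) for the optimization guarantee and Theorem~\ref{thm:stat-structural} for the excess risk. Both require that $\matR \in \R^{d \times k}$ has orthonormal columns and that $d_2(\matR, \matV_{\matA,k}) \leq \nu(1+\nu^2)^{-1/2}$. The first property is immediate for $\matR = \matV_{\matS\matA,k}$ since the thin SVD produces orthonormal columns. So the entire content of the proof is to verify the subspace closeness condition; everything else is routine substitution.

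To verify the closeness condition, I would view $\matA^{\T}\matA$ and $(\matS\matA)^{\T}(\matS\matA) = \matA^{\T}\matS^{\T}\matS\matA$ as two symmetric matrices whose top $k$-dimensional right eigenspaces are exactly $\matV_{\matA,k}$ and $\matV_{\matS\matA,k}$. The $(\epsilon,\delta)$-approximate Gram matrix property gives, with probability at least $1-\delta$,
\[
\bigl\| \matA^{\T}\matA - \matA^{\T}\matS^{\T}\matS\matA \bigr\|_2 \leq \epsilon \TNormS{\matA} = \epsilon \sigma_1^2.
\]
Applying Weyl's inequality to the $(k{+}1)$-st eigenvalue of $\matA^{\T}\matS^{\T}\matS\matA$ gives $\sigma_{k+1}(\matS\matA)^2 \leq \sigma_{k+1}^2 + \epsilon \sigma_1^2$, so the eigengap needed in Theorem~\ref{thm:sin-theta-1} (Davis--Kahan) satisfies $\sigma_k^2 - \sigma_{k+1}(\matS\matA)^2 \geq \sigma_k^2 - \sigma_{k+1}^2 - \epsilon \sigma_1^2 = (\gap{\matA}{k} - \epsilon)\sigma_1^2$. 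Davis--Kahan then yields
\[
d_2(\matV_{\matS\matA,k}, \matV_{\matA,k}) \leq \frac{\epsilon \sigma_1^2}{(\gap{\matA}{k} - \epsilon)\sigma_1^2} = \frac{\epsilon}{\gap{\matA}{k} - \epsilon}.
\]
A short algebraic manipulation shows the chosen $\epsilon = \tfrac{\mu}{1+\mu}\gap{\matA}{k}$, with $\mu := \nu(1+\nu^2)^{-1/2}$, is precisely the value that makes the right-hand side equal to $\mu$; this is the motivation for the funny-looking $\epsilon$ in the hypothesis.

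With the bound $d_2(\matV_{\matS\matA,k},\matV_{\matA,k}) \leq \nu(1+\nu^2)^{-1/2}$ in hand, I invoke Theorem~\ref{thm:structural}(\ref{enu:structural_k_equal_s}) to obtain the $\bigl(\tfrac{\sigma_{k+1}}{\sigma_k}\nu, \tfrac{\nu}{(\sqrt{1-\nu^2}-\nu)\sigma_k}\bigr)$-approximate PCR guarantee, and Theorem~\ref{thm:stat-structural} to obtain the stated excess risk bound. Finally, the two explicit sketch-size statements follow directly by plugging the requisite $\epsilon = \tfrac{\mu}{1+\mu}\gap{\matA}{k} = \Theta(\nu \cdot \gap{\matA}{k})$ into the known sketch dimensions from Cohen et al.\ for \noun{CountSketch} ($\Omega(\sr{\matA}^2/(\epsilon^2\delta))$ rows) and subgaussian maps ($\Omega((\sr{\matA}+\log(1/\delta))/\epsilon^2)$ rows), and simplifying.

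The only subtle point is bookkeeping in the Davis--Kahan step: one must be careful to apply it with the roles chosen so that the required eigengap $\lambda_k - \tilde\lambda_{k+1}$ appears on the perturbed side (i.e., perturbed $\tilde{\lambda}_{k+1}$), which is exactly how Theorem~\ref{thm:sin-theta-1} is stated. The rest is arithmetic. I expect no serious obstacle; the theorem is essentially the composition of a spectral perturbation bound with the two deterministic structural results already proved.
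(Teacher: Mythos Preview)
Your proposal is correct and follows essentially the same route as the paper: reduce to Theorems~\ref{thm:structural}(\ref{enu:structural_k_equal_s}) and~\ref{thm:stat-structural}, then verify $d_{2}(\matV_{\matS\matA,k},\matV_{\matA,k})\leq\nu(1+\nu^{2})^{-1/2}$ via the approximate Gram property, Weyl's inequality, and Davis--Kahan (Theorem~\ref{thm:sin-theta-1}), with the algebra on $\epsilon$ exactly as you describe. The only minor omission is that the paper explicitly checks $\matA^{\T}\matS^{\T}\matS\matA$ has rank at least $k$ (needed as a hypothesis of Theorem~\ref{thm:sin-theta-1}), which follows from $\epsilon<\gap{\matA}{k}\leq\sigma_{k}^{2}/\sigma_{1}^{2}$ and Weyl; this is routine.
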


\begin{proof}
Due to Theorems~\ref{thm:structural} and~\ref{thm:stat-structural},
it suffices to show that that $d_{2}\left(\matR,\matV_{\mat A,k}\right)\leq\nu(1+\nu^{2})^{-1/2}$.
Under the conditions of the theorem, with probability of at least
$1-\delta$ we have $\TNorm{\matA^{\T}\matS^{\T}\matS\matA-\matA^{\T}\matA}\leq\epsilon\TNormS{\matA}$.
If that is indeed the case, $\matA^{\T}\matS^{\T}\matS\matA$ has
rank at least $k$ since $\matA^{\T}\matS^{\T}\matS\matA$ and $\matA^{\T}\matA$
are symmetric matrices and we know that $\sigma_{i}^{2}\left(\matA^{\T}\matS^{\T}\matS\matA\right)=\sigma_{i}^{2}\left(\matA^{\T}\matA\right)\pm\TNorm{\matA^{\T}\matS^{\T}\matS\matA-\matA^{\T}\matA}$
(Weyl's Theorem and the fact that $\sigma_{k}^{2}>\epsilon\sigma_{1}^{2}$).
Furthermore, since $\nu>0$ we have $\epsilon<\gap{\matA}k$, and
Theorem~\ref{thm:sin-theta-1} implies that
\begin{eqnarray*}
d_{2}(\matR,\matV_{\matA,k}) & \leq & \frac{\TNorm{\matA^{\T}\matA-\matA^{\T}\matS^{\T}\matS\matA}}{(\sigma_{k}^{2}-\sigma_{k+1}^{2})-\TNorm{\matA^{\T}\matA-\matA^{\T}\matS^{\T}\matS\matA}}\\
 & \leq & \frac{\epsilon}{\gap{\matA}k-\epsilon}\\
 & \leq & \nu(1+\nu^{2})^{-1/2}\,.
\end{eqnarray*}
Thus, we have shown that with probability $1-\delta$ we have $d_{2}\left(\matR,\matV_{\mat A,k}\right)\leq\nu(1+\nu^{2})^{-1/2}$,
as required.
\end{proof}
\begin{thm}
[Right Sketching]\label{thm:right_sketching}Let $\nu,\delta\in(0,1/2)$
and denote 
\[
\epsilon=\frac{\nu}{1+\nu}\cdot\gap{\matA}k.
\]
Suppose that $\matG$ is sampled from a distribution that provides
a $(\epsilon,\delta)$-approximate Gram matrix for $\matA^{\T}$.
Then for $\matR=\matG^{\T}$, with probability $1-\delta$, the approximate
solution $\matA\x_{\matR,k}$ is an $(\nu,\nu)$-approximate PCP and
\[
{\cal E}(\x_{\matR,k})\leq{\cal E}(\x_{k})+\frac{(2\nu+\nu^{2})\TNormS{\f}}{n}
\]

Thus if, for example, $\matG$ is a \noun{CountSketch} matrix, then
the conditions are met when the number of rows in $\matG$ is 
\[
\Omega\left(\frac{\sr{\matA}^{2}}{\gap{\matA}k^{2}\nu^{2}\delta}\right)
\]
 rows. In another example, if $\matG$ is a subgaussian map, then
the conditions are met when the number of rows in $\matG$ is 
\[
\Omega\left(\frac{\sr{\matA}+\log(1/\delta)}{\gap{\matA}k^{2}\nu^{2}}\right)\,.
\]
\end{thm}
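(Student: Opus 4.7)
The plan is to mirror the left-sketching argument: reduce the statement to Theorem~\ref{thm:structural}(1) and Theorem~\ref{thm:struct-stat-pcp} by establishing, with probability at least $1-\delta$, the bound $d_{2}(\matU_{\matA\matR,k},\matU_{\matA,k})\leq\nu$ (and, as a side-condition, $\rank{\matA\matR}\geq k$). Once this is in hand, the two stated guarantees on $\x_{\matR,k}$ follow immediately by quoting the two theorems, and the concrete sketch sizes follow from the Cohen et al.\ approximate Gram bounds together with the identity $\sr{\matA^{\T}}=\sr{\matA}$.

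To obtain the bound on $d_{2}$, I would exploit that with $\matR=\matG^{\T}$ we have $\matA\matR=\matA\matG^{\T}$, so the left singular vectors $\matU_{\matA\matR,k}$ coincide with the top-$k$ eigenvectors of the symmetric matrix $\matA\matG^{\T}\matG\matA^{\T}$, while $\matU_{\matA,k}$ are the top-$k$ eigenvectors of $\matA\matA^{\T}$. The $(\epsilon,\delta)$-approximate Gram property for $\matA^{\T}$ gives, with probability at least $1-\delta$,
\[
\TNorm{\matA\matG^{\T}\matG\matA^{\T}-\matA\matA^{\T}}\leq\epsilon\TNormS{\matA^{\T}}=\epsilon\sigma_{1}^{2}.
\]
This is exactly the perturbation regime of the Davis--Kahan corollary (Theorem~\ref{thm:sin-theta-1}) applied to $\matA\matA^{\T}$ and $\matA\matG^{\T}\matG\matA^{\T}$, whose top-$k$ invariant subspaces are $\matU_{\matA,k}$ and $\matU_{\matA\matR,k}$, respectively.

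I would then bound the spectral gap in the Davis--Kahan denominator by Weyl's inequality: $\lambda_{k+1}(\matA\matG^{\T}\matG\matA^{\T})\leq\sigma_{k+1}^{2}+\epsilon\sigma_{1}^{2}$, so the denominator is at least $\sigma_{k}^{2}-\sigma_{k+1}^{2}-\epsilon\sigma_{1}^{2}=\sigma_{1}^{2}(\gap{\matA}{k}-\epsilon)$. Combining with the perturbation bound,
\[
d_{2}(\matU_{\matA\matR,k},\matU_{\matA,k})\leq\frac{\epsilon\sigma_{1}^{2}}{\sigma_{1}^{2}(\gap{\matA}{k}-\epsilon)}=\frac{\epsilon}{\gap{\matA}{k}-\epsilon}.
\]
Plugging in $\epsilon=\tfrac{\nu}{1+\nu}\gap{\matA}{k}$ yields exactly $d_{2}\leq\nu$, as required. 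The same Weyl estimate shows $\sigma_{k}^{2}(\matA\matG^{\T}\matG\matA^{\T})\geq\sigma_{k}^{2}-\epsilon\sigma_{1}^{2}>0$ (since $\epsilon<\gap{\matA}{k}\leq\sigma_{k}^{2}/\sigma_{1}^{2}$), so $\rank{\matA\matR}\geq k$.

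The sketch-size statements then follow from the two facts on approximate Gram matrices recalled before the left-sketching theorem, applied to $\matA^{\T}$ with the prescribed $\epsilon=\Theta(\nu\cdot\gap{\matA}{k})$; since $\sr{\matA^{\T}}=\sr{\matA}$, we recover the same $\Omega(\sr{\matA}^{2}/(\gap{\matA}{k}^{2}\nu^{2}\delta))$ and $\Omega((\sr{\matA}+\log(1/\delta))/(\gap{\matA}{k}^{2}\nu^{2}))$ bounds. There is no real obstacle here beyond the correct bookkeeping of the $\nu/(1+\nu)$ factor in $\epsilon$; the only mildly subtle point is remembering to transpose the target of the Gram property (i.e.\ that we need the approximate Gram property for $\matA^{\T}$ rather than for $\matA$), because it is $\matG\matA^{\T}$ (not $\matS\matA$) that appears on the inside of the relevant symmetric perturbation.
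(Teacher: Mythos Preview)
Your proposal is correct and follows essentially the same route as the paper's own proof: reduce to the subspace bound $d_{2}(\matU_{\matA\matR,k},\matU_{\matA,k})\leq\nu$, invoke the approximate Gram property for $\matA^{\T}$ to control $\TNorm{\matA\matG^{\T}\matG\matA^{\T}-\matA\matA^{\T}}$, apply Weyl and the Davis--Kahan corollary (Theorem~\ref{thm:sin-theta-1}), and conclude via Theorems~\ref{thm:structural} and~\ref{thm:struct-stat-pcp}. If anything, your write-up is slightly more careful than the paper's: you explicitly cite Theorem~\ref{thm:struct-stat-pcp} for the excess-risk bound and note $\sr{\matA^{\T}}=\sr{\matA}$ for the sketch-size conclusions, both of which the paper leaves implicit.
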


\begin{proof}
Due to Theorem~\ref{thm:structural} it suffices to show that that
$d_{2}\left(\matU_{\mat A\matR,k},\matU_{\mat A,k}\right)\leq\nu$.
Under the conditions of the Theorem, with probability of at least
$1-\delta$ we have $\TNorm{\matA\matG^{\T}\matG\matA^{\T}-\matA\matA^{\T}}\leq\epsilon\TNormS{\matA}$.
If that is indeed the case, $\matA\matG^{\T}\matG\matA^{\T}$ has
rank at least $k$ since $\matA\matG^{\T}\matG\matA^{\T}$ and $\matA\matA^{\T}$
are symmetric matrices and we know that $\sigma_{i}^{2}\left(\matA\matG^{\T}\matG\matA^{\T}\right)=\sigma_{i}^{2}\left(\matA\matA^{\T}\right)\pm\TNorm{\matA\matG^{\T}\matG\matA^{\T}-\matA\matA^{\T}}$
(Weyl's Theorem and the fact that $\sigma_{k}^{2}>\epsilon\sigma_{1}^{2}$).
Furthermore, since $\nu>0$ we have $\epsilon<\gap{\matA}k$, and
Theorem~\ref{thm:sin-theta-1} implies
\begin{eqnarray*}
d_{2}(\matU_{\mat A\matR,k},\matU_{\mat A,k}) & \leq & \frac{\TNorm{\matA\matG^{\T}\matG\matA^{\T}-\matA\matA^{\T}}}{(\sigma_{k}^{2}-\sigma_{k+1}^{2})-\TNorm{\matA\matG^{\T}\matG\matA^{\T}-\matA\matA^{\T}}}\\
 & \leq & \frac{\epsilon}{\gap{\matA}k-\epsilon}\\
 & \leq & \nu\,.
\end{eqnarray*}
Thus, we have shown that with probability $1-\delta$ we have $d_{2}\left(\matU_{\mat A\matR,k},\matU_{\mat A,k}\right)\leq\nu$,
as required.
\end{proof}
\begin{thm}
[Two Sided Sketching]\label{thm:left_right_sketching}Let $\nu,\delta\in(0,1/2)$
and denote 
\[
\epsilon_{2}=\frac{\nu}{2(1+\nu/2)}\cdot\gap{\matA}k.
\]
Suppose that $\matG$ is sampled from a distribution that provides
a $(\epsilon_{2},\delta/2)$-approximate Gram matrix for $\matA^{\T}$.
Denote
\[
\epsilon_{1}=\frac{\nu(1+\nu^{2}/4)^{-1/2}/2}{1+\nu(1+\nu^{2}/4)^{-1/2}/2}\cdot\gap{\matA\matG^{\T}}k
\]
and uppose that $\matS$ is sampled from a distribution that provides
a $(\epsilon_{1},\delta/2)$-approximate Gram matrix for $\matA\matG^{\T}$.
Then for $\matR=\matG^{\T}\matV_{\matS\matA\matG^{\T},k}$ with probability
$1-\delta$ the approximate solution $\matA\x_{\matR,k}$ is an $(\nu,\nu)$-approximate
PCP and 
\[
{\cal E}(\x_{\matR,k})\leq{\cal E}(\x_{k})+\frac{(2\nu+\nu^{2})\TNormS{\f}}{n}
\]

Thus if, for example, $\matS$ is a \noun{CountSketch} matrix and
$\matG$ is a subgaussian map, then the conditions hold when the numbers
of rows of $\matS$ is 
\[
\Omega\left(\frac{\sr{\matA\matG^{\T}}^{2}}{\gap{\matA\matG^{\T}}k^{2}\nu^{2}\delta}\right)
\]
 and the number of rows in $\matG$ is 
\[
\Omega\left(\frac{\sr{\matA}+\log(1/\delta)}{\gap{\matA}k^{2}\nu^{2}}\right)\,.
\]
\end{thm}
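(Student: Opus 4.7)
}
The plan is to view two-sided sketching as the composition of Theorem~\ref{thm:right_sketching} (applied to the original matrix $\matA$ via $\matG$) with a left-sketching step applied to the already compressed matrix $\matA\matG^{\T}$, and then to close the loop by combining the two estimates of subspace distance with a triangle inequality in the metric $d_{2}$. The target quantity to control is $d_{2}(\matU_{\matA\matR,k},\matU_{\matA,k})$, because once this is bounded by $\nu$ the PCP statement follows from part~1 of Theorem~\ref{thm:structural} and the excess risk bound follows from Theorem~\ref{thm:struct-stat-pcp}.

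First I would replay the proof of Theorem~\ref{thm:right_sketching} with $\nu$ replaced by $\nu/2$. The approximate Gram hypothesis for $\matA^{\T}$ with parameter $\epsilon_{2}$ together with Weyl's inequality and Theorem~\ref{thm:sin-theta-1} (applied to the symmetric matrices $\matA\matA^{\T}$ and $\matA\matG^{\T}\matG\matA^{\T}$) yields, with probability at least $1-\delta/2$,
\[
d_{2}\!\left(\matU_{\matA\matG^{\T},k},\matU_{\matA,k}\right)\leq\frac{\epsilon_{2}}{\gap{\matA}{k}-\epsilon_{2}}\leq\frac{\nu}{2}.
\]
In particular this also guarantees $\rank{\matA\matG^{\T}}\geq k$ and that $\gap{\matA\matG^{\T}}{k}$ is well defined, so the quantity $\epsilon_{1}$ in the statement makes sense.

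Next I would apply the left-sketching argument to the matrix $\matA\matG^{\T}$ with parameter $\nu/2$. The approximate Gram hypothesis for $\matA\matG^{\T}$ with parameter $\epsilon_{1}$, together with Theorem~\ref{thm:sin-theta-1} applied to $(\matA\matG^{\T})^{\T}(\matA\matG^{\T})$, gives, with probability at least $1-\delta/2$,
\[
d_{2}\!\left(\matV_{\matS\matA\matG^{\T},k},\matV_{\matA\matG^{\T},k}\right)\leq\frac{\nu/2}{\sqrt{1+(\nu/2)^{2}}}.
\]
Now Lemma~\ref{lem:14}, applied to the matrix $\matA\matG^{\T}$ in place of $\matA$ and to $\matV_{\matS\matA\matG^{\T},k}$ in place of $\matR$, converts this into a bound on the column space of the compressed product,
\[
d_{2}\!\left(\matU_{\matA\matG^{\T}\matV_{\matS\matA\matG^{\T},k}},\matU_{\matA\matG^{\T},k}\right)\leq\frac{\sigma_{k+1}(\matA\matG^{\T})}{\sigma_{k}(\matA\matG^{\T})}\cdot\frac{\nu}{2}\leq\frac{\nu}{2}.
\]
Because $\matA\matR=\matA\matG^{\T}\matV_{\matS\matA\matG^{\T},k}$ has exactly $k$ columns and full column rank (ensured, as in the proof of part~\ref{enu:structural_k_equal_s} of Theorem~\ref{thm:structural}, by $\nu/2<1$), the matrix on the left is precisely $\matU_{\matA\matR,k}$.

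Combining the two displays via the triangle inequality for $d_{2}$ (which is a metric, since it is a spectral norm of a difference of projectors) gives, by a union bound and with probability at least $1-\delta$,
\[
d_{2}\!\left(\matU_{\matA\matR,k},\matU_{\matA,k}\right)\leq\frac{\nu}{2}+\frac{\nu}{2}=\nu,
\]
at which point the PCP and risk conclusions are immediate by invoking the two structural theorems cited above. The sketch size statements follow by plugging the approximate Gram matrix sample complexities of Cohen et al.~\cite{cohen2015optimal} (\textsc{CountSketch} and subgaussian maps) into the definitions of $\epsilon_{1}$ and $\epsilon_{2}$.

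The main obstacle I anticipate is bookkeeping rather than conceptual: propagating the factor $\nu/2$ at each of the two stages, making sure the gap $\gap{\matA\matG^{\T}}{k}$ is not destroyed by the first stage (which is handled by the $\nu/2$ budget combined with Weyl's inequality), and correctly identifying $\matU_{\matA\matR,k}$ with $\matU_{\matA\matG^{\T}\matV_{\matS\matA\matG^{\T},k}}$ so that Lemma~\ref{lem:14} can be chained with the right-sketching bound under a single triangle inequality.
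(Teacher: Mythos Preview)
Your proposal is correct and follows essentially the same approach as the paper: bound $d_{2}(\matU_{\matA\matG^{\T},k},\matU_{\matA,k})\le\nu/2$ via the right-sketching argument, bound $d_{2}(\matV_{\matS\matA\matG^{\T},k},\matV_{\matA\matG^{\T},k})$ via the $\sin\Theta$ theorem and then upgrade it to $d_{2}(\matU_{\matA\matR},\matU_{\matA\matG^{\T},k})\le\nu/2$ using Lemma~\ref{lem:14}, combine by the triangle inequality for $d_{2}$, and invoke Theorems~\ref{thm:structural} and~\ref{thm:struct-stat-pcp}. The only cosmetic difference is the order in which the two halves are presented; the paper also explicitly checks the rank-$\geq k$ conditions for both $\matA\matG^{\T}\matG\matA^{\T}$ and $(\matA\matG^{\T})^{\T}\matS^{\T}\matS\matA\matG^{\T}$ via Weyl before invoking Theorem~\ref{thm:sin-theta-1}, which you allude to but should state explicitly.
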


\begin{proof}
Due to Theorem~\ref{thm:structural} it suffices to show that that
$d_{2}\left(\matU_{\mat A\matR,k},\matU_{\mat A,k}\right)\leq\nu$. 

Under the conditions of the Theorem, with probability of at least
$1-\delta/2$ we have $\TNorm{(\matA\matG^{\T})^{\T}\matS^{\T}\matS\matA\matG^{\T}-(\matA\matG^{\T})^{\T}\matA\matG^{\T}}\leq\epsilon_{1}\TNormS{\matA\matG^{\T}}$,
and with probability of at least $1-\delta/2$ we have $\TNorm{\matA\matG^{\T}\matG\matA^{\T}-\matA\matA^{\T}}\leq\epsilon_{2}\TNormS{\matA}$.
Thus, both inequalities hold with probability of at least $1-\delta$.
If that is indeed the case, $\matA\matG^{\T}\matG\matA^{\T}$ has
rank at least $k$ since $\matA\matG^{\T}\matG\matA^{\T}$ and $\matA\matA^{\T}$
are symmetric matrices and we know that $\sigma_{i}^{2}\left(\matA\matG^{\T}\matG\matA^{\T}\right)=\sigma_{i}^{2}\left(\matA\matA^{\T}\right)\pm\TNorm{\matA\matG^{\T}\matG\matA^{\T}-\matA\matA^{\T}}$
(Weyl's Theorem and the fact that $\sigma_{k}^{2}>\epsilon_{2}\sigma_{1}^{2}$).
Moreover, $(\matA\matG^{\T})^{\T}\matS^{\T}\matS\matA\matG^{\T}$
has rank at least $k$ since $(\matA\matG^{\T})^{\T}\matS^{\T}\matS\matA\matG^{\T}$
and $\matA\matG^{\T}\matG\matA^{\T}$ are symmetric matrices and we
know that $\sigma_{i}^{2}\left((\matA\matG^{\T})^{\T}\matS^{\T}\matS\matA\matG^{\T}\right)=\sigma_{i}^{2}\left(\matA\matG^{\T}\matG\matA^{\T}\right)\pm\TNorm{(\matA\matG^{\T})^{\T}\matS^{\T}\matS\matA\matG^{\T}-\matA\matG^{\T}\matG\matA^{\T}}$
(Weyl's Theorem and the fact that $\sigma_{k}^{2}\left(\matA\matG^{\T}\right)>\epsilon_{1}\sigma_{1}^{2}\left(\matA\matG^{\T}\right)$).
Since $\nu>0$ we have $\epsilon_{1}<\gap{\matA\matG^{\T}}k$ and
Theorem~\ref{thm:sin-theta-1} implies

\begin{eqnarray*}
d_{2}(\matV_{\matA\matG^{\T}\matV_{\matS\matA\matG^{\T},k}},\matV_{\matA\matG^{\T},k}) & \leq & \frac{\TNorm{(\matA\matG^{\T})^{\T}\matS^{\T}\matS\matA\matG^{\T}-(\matA\matG^{\T})^{\T}\matA\matG^{\T}}}{\sigma_{k}^{2}\left(\matA\matG^{\T}\right)-\sigma_{k+1}^{2}\left(\matA\matG^{\T}\right)-\TNorm{(\matA\matG^{\T})^{\T}\matS^{\T}\matS\matA\matG^{\T}-(\matA\matG^{\T})^{\T}\matA\matG^{\T}}}\\
 & \leq & \frac{\epsilon_{1}\TNormS{\matA\matG^{\T}}}{\sigma_{k}^{2}\left(\matA\matG^{\T}\right)-\sigma_{k+1}^{2}\left(\matA\matG^{\T}\right)-\epsilon_{1}\TNormS{\matA\matG^{\T}}}\\
 & = & \frac{\epsilon_{1}}{\gap{\matA\matG^{\T}}k-\epsilon_{1}}\\
 & \leq & \nu(1+\nu^{2}/4)^{-1/2}/2
\end{eqnarray*}
From Lemma~\ref{lem:14} (with $\matA\matG^{\T}$) we get that $d_{2}(\matU_{\matA\matG^{\T}\matV_{\matS\matA\matG^{\T},k}},\matU_{\matA\matG^{\T},k})\leq\frac{\sigma_{k+1}(\matA\matG^{\T})}{\sigma_{k}(\matA\matG^{\T})}(\nu/2)\leq\nu/2$. 

We now bound
\begin{eqnarray*}
d_{2}\left(\matU_{\mat A\matR,k},\matU_{\mat A,k}\right) & \leq & d_{2}\left(\matU_{\mat A\matR,k},\matU_{\mat A\matG^{\T},k,k}\right)+d_{2}\left(\matU_{\mat A\matG^{\T},k},\matU_{\mat A,k}\right)\\
 & \leq & \nu
\end{eqnarray*}
where we similarly use Theorem \ref{thm:right_sketching} to bound
$d_{2}(\matU_{\mat A\matG^{\T},k},\matU_{\mat A,k})\leq\nu/2.$

Thus, we have shown that with probability $1-\delta$ we have $d_{2}\left(\matU_{\mat A\matR,k},\matU_{\mat A,k}\right)\leq\nu$,
as required.
\end{proof}

\subsection{Fast Approximate PCR/PCP}

A prototypical algorithm for approximate PCR/PCP is to compute $\x_{\matR,k}$
with some choice of sketching-based $\matR$. There are quite a few
design choices that need to be made in order to turn this prototypical
algorithm into a concrete algorithm, e.g. whether to use left, right
or two sided sketching to form $\matR$, and which sketch transform
to use. There are various tradeoffs, e.g. using \noun{CountSketch}
results in faster sketching, but usually requires larger sketch sizes.
Furthermore, in computing $\x_{\matR,k}$ there are also algorithmic
choices to be made with respect to choosing the order of matrix multiplications:
in computing $(\matA\matR\matV_{\matA\matR,k})^{\pinv}\b$ should
we first compute $\matA\matR$ and then multiply by $\matV_{\matA\matR,k}$,
or vice versa? Likely, there is no one size fit all algorithm, and
different profiles of the input matrix (in particular, the size and
sparsity level) call for a different variant of the prototypical algorithm. 

\begin{table}
\begin{centering}
{\footnotesize{}}%
\begin{tabular}{|c|c|c|c|}
\hline 
 &  & {\footnotesize{}$\x_{\matR,k}$} & {\footnotesize{}CLS}\tabularnewline
\hline 
\hline 
{\footnotesize{}Left sketching} & {\footnotesize{}subgaussian $\matS$} & {\footnotesize{}$s_{1}\cdot\nnz{\matA}+s_{1}d\min(s_{1},d)+nk^{2}$} & {\footnotesize{}N/A}\tabularnewline
\cline{2-4} 
{\footnotesize{}$\matR=\matV_{\matS\matA,k}$} & \noun{\footnotesize{}CountSketch}{\footnotesize{} $\matS$} & {\footnotesize{}$k\cdot\nnz{\matA}+s_{2}d\min(s_{2},d)+nk^{2}$} & {\footnotesize{}N/A}\tabularnewline
\hline 
{\footnotesize{}Right sketching} & {\footnotesize{}subgaussian $\matG$} & {\footnotesize{}$t_{1}\cdot\nnz{\matA}+nt_{1}\min(n,t_{1})+t_{1}k\min(n,d)$} & {\footnotesize{}$t_{1}\cdot\nnz{\matA}+nt_{1}\min(n,t_{1})$}\tabularnewline
\cline{2-4} 
{\footnotesize{}$\matR=\matG^{\T}$} & \noun{\footnotesize{}CountSketch}{\footnotesize{} $\matG$} & {\footnotesize{}$\nnz{\matA}+nt_{2}\min(n,t_{2})+t_{2}k\min(n,d)$} & {\footnotesize{}$\nnz{\matA}+nt_{2}\min(n,t_{2})$}\tabularnewline
\hline 
{\footnotesize{}Two sided } & \noun{\footnotesize{}CountSketch} & {\footnotesize{}$\nnz{\matA}+s_{2}k^{2}+k\min(nt_{2},\nnz{\matA})+nk^{2}$} & {\footnotesize{}N/A}\tabularnewline
{\footnotesize{}$\matR=\matG^{\T}\matV_{\matS\matA\matG^{\T},k}$} & {\footnotesize{}$\matG$ and $\matS$} &  & \tabularnewline
\hline 
\end{tabular}
\par\end{centering}{\footnotesize \par}
\caption{\label{tab:complex}Computational complexity of computing $\protect\x_{\protect\matR,k}$
and CLS for various options $\protect\matR$. For brevity, we omit
the $O$$()$ from the notation. }
\end{table}

Table~\ref{tab:complex} summarizes the running time complexity of
several design options. In order to better make sense between these
different choices, we first summarize the running time complexity
of various design choices using the optimal implementation (from an
asymptotic running-time complexity perspective). To make the discussion
manageable, we consider only subgaussian maps and \noun{CountSketch}.
Furthermore, for the sake of the analysis, we make some assumptions
and adopt some notational conventions. First, we assume that computing
$\matB^{\pinv}\c$ for some $\matB\in\R^{m\times n}$ and $\c$ is
done via straightforward methods based on QR or SVD factorizations,
and as such takes $O(mn\min(m,n))$. We consider using fast sketch-based
approximate least squares algorithms in the next subsection. Next,
we let the sketch sizes be parameters in the complexity. In the discussion,
we use our theoretical results to deduce reasonable assumptions on
how these parameters are set, and thus to reason about the final complexity
of sketched PCR/PCP. We denote the number of rows in the left sketch
matrix $\matS$ by $s_{1}$ for a subgaussian map, and $s_{2}$ for
\noun{CountSketch}. We denote the number of rows in the left sketch
matrix $\matG$ by $t_{1}$ for a subgaussian map, and $t_{2}$ for
\noun{CountSketch}. Finally, we assume $\nnz{\matA}\geq\max(n,d)$,
and that all sketch sizes are greater than $k$. 

Table~\ref{tab:complex} also lists, where relevant, the complexity
of the CLS solution $\x_{\matR}$. 

\paragraph{Discussion.}

We first compare the computational complexity of CLS to the computational
complexity of our proposed right sketching algorithm. For both choices
of $\matG$ we have for sketched PCP an additional term of $O(tk\min(n,d))$.
However, close inspection reveals that this term is dominated by the
term $O(nt\min(n,t))$. Thus our proposed algorithm has the same asymptotic
complexity as CLS for the same sketch size. However, our algorithm
does not mix regularization and compression and comes with stronger
theoretical guarantees. 

Next, in order to compare subgaussian maps to \noun{CountSketch},
we first make some simplified assumptions on the required approximation
quality $\nu$, the relative eigengap $\gap{\matA}k$, and the rank
parameter $k$: $\nu$ is fixed, $\gap{\matA}k$ is bounded from below
by a constant, and we have $k=O(\sr{\matA})$. The first assumptions
is justified if we are satisfied with fixed sub-optimality in the
objective (optimization perspective), or a small constant multiplicative
increase in excess risk if left sketching is used, or $n$ is fixed
(statistical perspective). The first assumption is somewhat less justified
from a statistical point of view when $n\to\infty$ and right sketching
is used. The rationale behind the second assumption is that the PCR/PCP
problem is in a sense ill-posed if there is a tiny eigengap. The third
assumption is motivated by the fact that the stable rank is a measure
of the number of large singular values, which are typically singular
values that correspond to the signal rather than noise. With these
assumptions, our theoretical results establish that $s_{1},t_{1}=O(k)$
and $s_{2},t_{2}=O(k^{2})$ suffice. It is important to stress that
we make these assumptions only for the sake of comparing the different
sketching options, and we do not claim that these assumptions always
hold, or that our proposed algorithms work only when these assumptions
hold. 

For left sketching, with these assumptions, we have complexity of
$O(k\nnz{\matA}+k^{2}\max(n,d))$ for subgaussian maps and $O(k\nnz{\matA}+dk^{2}\min(k^{2},d)+nk^{2})$
for \noun{CountSketch}. Clearly, better asymptotic complexity is achieved
with subgaussian maps. For right sketching, with these assumptions,
we have complexity of $O(k\nnz{\matA}+nk^{2})$ for subgaussian sketch
and $O(\nnz{\matA}+nk^{2}\min(n,k^{2}))$ for \noun{CountSketch}.
The complexity in terms of the input sparsity $\nnz{\matA}$\noun{,}
which is arguably the dominant term, is better for \noun{CountSketch}.
For two sided sketching, we have complexity $O(\nnz{\matA}+k^{4}+k\min(nk^{2},\nnz{\matA})+nk^{2}$).

If $n\gg d$ and $\nnz{\matA}=O(n)$ (sparse input matrix, and constant
amount of non zero features per data point), left sketching gives
better asymptotic complexity. If $n\gg d$ and $\nnz{\matA}=nd$ (full
data matrix), left sketching has better complexity unless $d\gg k^{3}$.
Furthermore, left sketching gives stronger theoretical guarantees.\emph{
Thus, for $n\gg d$} \emph{we advocate the use of left sketching}.
If $d\gg n$ and $\nnz{\matA}=O(d)$ (sparse input matrix), right
sketching with a subgaussian maps always has better complexity than
left sketching, and potentially (but not always) right sketching with
\noun{CountSketch} has even better complexity. If $d\gg n$ and $\nnz{\matA}=nd$
(full data matrix), right sketching with subgaussian maps has the
same complexity as left sketching, and potentially (but not always)
right sketching with \noun{CountSketch} has even better complexity
(if $d$ is sufficiently larger than $n$). \emph{Thus, for $d\gg n$
we advocate the use of right sketching}. If $n\approx d$ (both very
large), and $\nnz{\matA}=n$ then it is possible to have $O(nk^{2})$
with all three options (left, right and two sided), as long as $k^{2}\leq n$.
A similar conclusion is achieved if $n\approx d$ and $\nnz{\matA}=nd$,
but if $k^{2}\ll n$ then two sided sketch is better. 

\subsection{\label{subsec:input-sparsity}Input Sparsity Approximate PCP}

In this section, we propose an input-sparsity algorithm for approximate
PCP. In 'input-sparsity algorithm', we mean an algorithm whose running
time is $O(\nnz{\matA}\log(d/\epsilon)+\poly{k,s,t,\log(1/\epsilon})$,
where $\epsilon$ is some accuracy parameter (see formal theorem statement).

\begin{algorithm}
\begin{algorithmic}[1]

\STATE \textbf{Input: $\matA\in\R^{n\times d}$},\textbf{ $\b\in\R^{n}$},\textbf{
}$k\leq\min(n,d)$, $s,t\geq k$, $\epsilon\in(0,1)$ 

\STATE 

\STATE Generate two \noun{CountSketch} matrices $\matS\in\R^{s\times n}$
and $\matG\in\R^{t\times n}$. 

\STATE Remove from $\matG$ any row that is zero. 

\STATE $\matC\gets\matA\matG^{\T}$.

\STATE $\matD\gets\matS\matC$.

\STATE Compute $\matV_{\matD,k}$, the $k$ dominant right invariant
space of $\matG$ (via SVD). 

\STATE For the analysis (no need to compute): $\matR=\matG^{\T}\matV_{\matD,k}$.

\STATE Solve $\min_{\gamma}\TNorm{\matC\matV_{\matD,k}\gamma-\b}$
to $\epsilon/d$ accuracy using input-sparsity least squares regression
(see \cite[section 7.7]{CW17}). (Do not compute $\matC\matV_{\matD,k}$.
In each iteration, multiplying a vector by $\matC\matV_{\matD,k}$
is performed by first multiplying by $\matV_{\matD,k}$ and then by
$\matC$.)

\STATE Return $\y\gets\matG^{\T}(\matV_{\matD,k}\tilde{\gamma})$,
where $\tilde{\gamma}$ is the output of the previous step. 

\end{algorithmic}

\caption{\label{alg:input_sparsity}Input Sparsity Approximate PCP}
\end{algorithm}

The basic idea is to use two sided sketching, with an additional modification
of using input sparsity algorithms to approximate $(\matA\matR)^{\pinv}\b=\arg\min_{\gamma}\TNorm{\matA\matR\gamma-\b}$.
Specifically, we propose to use the algorithm recently suggested by
Clarkson and Woodruff~\cite{CW17}. A pseudo-code description of
our input sparsity approximate PCP algorithm is listed in Algorithm~\ref{alg:input_sparsity}.
We have the following statement about the algorithm.
\begin{thm}
Run Algorithm \ref{alg:input_sparsity} with $\epsilon,s,t,k$ as
parameters. Under exact arithmetic\footnote{The results are likely too optimistic for inexact arithmetic. We leave
the numerical analysis to future work. }, after 
\[
O(\nnz{\matA}\log(d/\epsilon)+\log(d/\epsilon)tk+sk^{2}+t^{3}k+k^{3}\log^{2}k)
\]
operations, with probability $2/3$, the algorithm will return a $\y$
such that 
\[
\TNormS{\y-\x_{\matR}}\leq\epsilon\TNormS{\x_{\matR}}\,.
\]
\end{thm}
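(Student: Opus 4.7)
The plan is to split the analysis into a correctness part and a running-time part. The key identity that drives correctness is that, after step~5--6 of Algorithm~\ref{alg:input_sparsity}, $\matC \matV_{\matD,k} = \matA\matG^{\T}\matV_{\matD,k} = \matA\matR$, so the regression in step~9 is approximately solving $\min_{\gamma}\TNorm{\matA\matR\gamma - \b}$, whose exact solution is $\gamma^\star := (\matA\matR)^{\pinv}\b$. Consequently $\x_{\matR} = \matR\gamma^\star$ exactly and the algorithm's output is $\y = \matR\tilde\gamma$. Invoking the Clarkson--Woodruff high-accuracy input-sparsity least squares routine \cite[Section 7.7]{CW17} with target accuracy $\epsilon/d$, we get (with constant probability) a $\tilde\gamma$ satisfying $\TNorm{\tilde\gamma - \gamma^\star} \le (\epsilon/d)\TNorm{\gamma^\star}$.

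To translate this into the desired bound, observe that
\[
\frac{\TNormS{\y-\x_{\matR}}}{\TNormS{\x_{\matR}}} \;=\; \frac{\TNormS{\matR(\tilde\gamma-\gamma^\star)}}{\TNormS{\matR\gamma^\star}} \;\le\; \left(\frac{\sigma_{\max}(\matR)}{\sigma_{\min}(\matR)}\right)^2\cdot\left(\frac{\epsilon}{d}\right)^2.
\]
The main obstacle is bounding the condition number $\kappa(\matR)$, and this is also what explains the otherwise mysterious $1/d$ factor in the inner-solver accuracy. Here I would exploit the structure of $\matR = \matG^{\T}\matV_{\matD,k}$: after step~4's zero-row removal, $\matG$ is a \noun{CountSketch} each of whose columns has exactly one $\pm 1$ entry, so $\matG\matG^{\T}$ is diagonal whose diagonal entries equal row loads (each at least $1$ since zero rows were pruned, each at most $d$). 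Hence $\sigma_{\min}(\matG^{\T})\ge 1$ and $\sigma_{\max}(\matG^{\T})\le\sqrt{d}$, and using that $\matV_{\matD,k}$ has orthonormal columns we propagate these to $\sigma_{\min}(\matR)\ge 1$ and $\sigma_{\max}(\matR)\le\sqrt{d}$. Therefore $\kappa(\matR)^2\le d$ and $\TNormS{\y-\x_{\matR}} \le (\epsilon^2/d)\TNormS{\x_{\matR}} \le \epsilon\TNormS{\x_{\matR}}$, as required.

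For the running time, I would account for each step separately: step~5 ($\matC=\matA\matG^{\T}$ with \noun{CountSketch} $\matG$) costs $O(\nnz{\matA})$; step~6 similarly costs $O(\nnz{\matC})\le O(\nnz{\matA})$; the top-$k$ SVD of $\matD\in\R^{s\times t}$ in step~7, using a sketch-based truncated SVD, accounts for the $sk^2 + t^3 k + k^3\log^2 k$ terms; and the iterative regression of step~9 performs $O(\log(d/\epsilon))$ iterations, each requiring one matrix--vector product with $\matC$ (cost $O(\nnz{\matA})$, since $\nnz{\matC}\le\nnz{\matA}$) and one with $\matV_{\matD,k}$ (cost $O(tk)$), contributing $O((\nnz{\matA}+tk)\log(d/\epsilon))$. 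Summing yields the claimed complexity, and the overall success probability $2/3$ is obtained by a union bound over the constant-probability failure events of the internal CW17 routines (amplifiable if desired).
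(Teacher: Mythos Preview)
Your approach is essentially the same as the paper's: identify $\matC\matV_{\matD,k}=\matA\matR$, invoke the CW17 input-sparsity least-squares solver at accuracy $\epsilon/d$, and then absorb the $1/d$ by bounding $\kappa(\matR)^2\le d$ via the structure of a (zero-row-pruned) \noun{CountSketch} $\matG$. The singular-value bounds $\sigma_{\min}(\matG^{\T})\ge 1$ and $\sigma_{\max}(\matG^{\T})\le\sqrt{d}$, and their propagation through the orthonormal $\matV_{\matD,k}$, match the paper exactly.

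One point where the paper is slightly more careful: you state directly that CW17 yields $\TNorm{\tilde\gamma-\gamma^\star}\le(\epsilon/d)\TNorm{\gamma^\star}$, but the raw guarantee of \cite[Theorem~7.14]{CW17} is in the \emph{preconditioned} norm, namely $\TNormS{\matB\matZ(\tilde\gamma-\gamma^\star)}\le(\epsilon/d)\TNormS{\matB\matZ\gamma^\star}$ for an internally computed $\matZ$ with $\kappa(\matB\matZ)=O(1)$. The paper inserts the one-line observation that this condition-number bound converts the preconditioned guarantee into $\TNormS{\tilde\gamma-\gamma^\star}\le O(\epsilon/d)\TNormS{\gamma^\star}$ before multiplying by $\kappa(\matR)^2$. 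Your argument is fine once this step is made explicit; as written it hides the reason the black box delivers a bound on $\tilde\gamma$ itself rather than on the residual. (Your bookkeeping also ends up with the slightly sharper $\epsilon^2/d$ instead of $O(\epsilon)$; both are consistent with $O(\log(d/\epsilon))$ iterations since convergence is geometric.)

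On running time: the paper's own proof does not spell out the term-by-term accounting, so your breakdown is additional work. The attribution of $O((\nnz{\matA}+tk)\log(d/\epsilon))$ to step~9 and $O(\nnz{\matA})$ to steps~5--6 is correct. Your assignment of the remaining $sk^2+t^3k+k^3\log^2 k$ entirely to the top-$k$ SVD of $\matD$ is plausible but not fully justified; some of those terms (in particular $k^3\log^2 k$) more naturally arise from the preconditioner-construction phase of the CW17 solver rather than from the SVD. This does not affect correctness, but if you want a clean accounting you should separate the SVD cost from the CW17 setup cost.
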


\begin{proof}
Denote $\matB=\matA\matR$, and consider using the iterative method
described in~\cite[section 7.7]{CW17} to approximately solve $\min_{\gamma}\TNorm{\matB\gamma-\b}$.
Denote the optimal solution by $\gamma_{\matR}$, and the solution
that our algorithm found by $\tilde{\gamma}$. Theorem 7.14 in \cite{CW17}
states that after the $O(\log(d/\epsilon))$ iterations the algorithm
would have returned $\tilde{\gamma}$ such that 
\begin{equation}
\TNormS{\matB\matZ(\tilde{\gamma}-\gamma_{\matR})}\leq(\epsilon/d)\TNormS{\matB\matZ\gamma_{\matR}}\label{eq:gamma_err}
\end{equation}
for some invertible $\matZ$ found by the algorithm. Furthermore,
$\kappa(\matB\matZ)=O(1)$ where $\kappa(\cdot)$ is the condition
number (ratio between the largest singular value and smallest). Eq.~(\ref{eq:gamma_err})
implies that $\TNormS{\tilde{\gamma}-\gamma_{\matR}}\leq\kappa(\matB\matZ)^{2}(\epsilon/d)\TNormS{\gamma_{\matR}}=O(\epsilon/d)\TNormS{\gamma_{\matR}}$.
Now, noticing that $\x_{\matR}=\matR\gamma_{\matR}$ and $\y=\matR\tilde{\gamma}$,
we find that 
\[
\TNormS{\y-\x_{\matR}}\leq O(\epsilon/d)\kappa(\matR)^{2}\TNormS{\x_{\matR}}\,.
\]
We now need to bound $\kappa(\matR)=\kappa(\matG^{\T}\matV_{\matS\matA\matG^{\T},k})=\kappa(\matG^{\T})$
where $\matG$ is a \noun{CountSketch} matrix. Since $\matG$ has
a single non zero in each column, then $\TNormS{\matG^{\T}}\leq\FNormS{\matG^{\T}}\leq d$.
Furthermore, since we removed zero column from $\matG^{\T}$, for
any $\x$ the vector $\matG^{\T}\x$ has in one of its coordinates
any coordinate of $\x$, so $\sigma_{\min}(\matG^{\T})\geq1$. So
we found that $\kappa(\matG^{\T})^{2}\leq d$. We conclude that 
\[
\TNormS{\y-\x_{\matR}}\leq O(\epsilon)\TNormS{\x_{\matR}}\,.
\]
Adjusting $\epsilon$ to compensate for the constants completes the
proof. 
\end{proof}

\section{\label{sec:extensions}Extensions}

\subsection{Streaming Algorithm}

We now consider computing an approximate PCR/PCP in the streaming
model. We consider a one-pass row-insertion streaming model, in which
the rows of $\matA$, $\a_{1},\a_{2},\dots,\a_{n}$, and the corresponding
entries in $\b$, $b_{1},b_{2},\dots,b_{n}$, are presented one-by-one
and once only (i.e., in a stream). The goal is to use $o(n)$ memory
(so $\matA$ cannot be stored in memory). The relevant resources to
be bounded for numerical linear algebra in the streaming model are
storage, update time (time spent per row), and final computation time
(at the end of the stream)~\cite{CW09}. Our goal is to bound these
by $O(\poly d)$ .

Our proposed streaming algorithm for approximate PCP uses left sketching.
It is easy to verify that if $\matS$ is a subgaussian map or \noun{CountSketch,
}then $\matR=\matV_{\matS\matA,k}$ can be computed in the streaming
model: one has to update $\matS\matA$ as new rows are presented ($O(d)$
update for \noun{CountSketch}, and $O(sd)$ for subgaussian map),
and once the final row has been presented, factorizing $\matS\matA$
and extracting $\matR$ can be done in $O(sd\min(s,d))$ which is
polynomial in $d$ if $s$ is polynomial in $d$. However, to compute
$\x_{\matR}$ one has to compute $(\matA\matR)^{\pinv}\b$, and storing
$\matA\matR$ in memory requires $\Omega(n)$ memory. To circumvent
this issue we propose to introduce another sketching matrix $\matT$,
and approximate $(\matA\matR)^{\pinv}\b$ via $(\matT\matA\matR)^{\pinv}\b$
. Thus, for $\matR=\matV_{\matS\matA,k}$ we approximate $\x_{\matR}$
by $\tilde{\x}_{\matR}=\matR(\matT\matA\matR)^{\pinv}\b$. It is easy
to verify that $\tilde{\x}_{\matR}$ can be computed in the streaming
model (by forming and updating $\matT\matA$ while computing $\matR$).

More generally, for \emph{any} $\matR$ which can be computed in the
streaming model, we can also compute in the streaming model the following
approximation of $\x_{\matR,k}$:

\[
\tilde{\x}_{\matR,k}:=\matR\matV_{\matA\matR,k}(\matT\matA\matR\matV_{\matA\matR,k})^{+}\matT\b\,.
\]
The next theorem, establishes conditions on $\matT$ that guarantee
that $\tilde{\x}_{\matR,k}$ is a an approximate PCR/PCP. 
\begin{thm}
\label{thm:sketch-stream-structure-impl}Suppose that $\matR\in\R^{d\times s}$
with $s\ge k$. Assume that $\nu\in(0,1)$. Suppose that $\matT$
provides a $O(\nu)$-distortion subspace embedding for $\range{\left[\begin{array}{ccc}
\matU_{\matA\matR,k} & \matU_{\matA,k} & \b\end{array}\right]}$ that is
\[
\TNormS{\matU_{\matA\matR,k}\x_{1}+\matU_{\matA,k}\x_{2}+\b x_{3}}=(1\pm O(\nu))(\TNormS{\x_{1}}+\TNormS{\x_{2}}+x_{3}^{3})
\]
for every $\x_{1},\x_{2}\in\R^{k}$ and $x_{3}\in\R$. Then,
\begin{enumerate}
\item If $d_{2}\left(\matU_{\mat A\matR,k},\matU_{\mat A,k}\right)\leq\nu$
then $\matA\tilde{\x}_{\matR,k}$ is an $(O(\nu),O(\nu))$-approximate
PCP. 
\item If $s=k$ and $\matR$ has orthonormal columns (i.e., $\matR^{\T}\matR=\matI_{k})$
and $d_{2}\left(\matR,\matV_{\mat A,k}\right)\leq\nu(1+\nu^{2})^{-1/2}$
then $\tilde{\x}_{\matR}$ is an $(O(\nu),\,O(\nu/\sigma_{k}))$-approximate
PCR. 
\end{enumerate}
The subspace embedding conditions on $\matT$ are met with probability
of at least $1-\delta$ if, for example, $\matT$ is a \noun{CountSketch}
matrix with $O(k^{2}/\nu^{2}\delta)$ rows. 
\end{thm}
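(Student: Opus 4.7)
The plan is to reduce both parts of the theorem to Theorem~\ref{thm:structural} by controlling the extra perturbation introduced by the sketch $\matT$. Let $\matB=\matA\matR\matV_{\matA\matR,k}$; since $\matB=\matU_{\matA\matR,k}\Sigma_{\matA\matR,k}$ by the SVD of $\matA\matR$, the exact solution satisfies $\matA\x_{\matR,k}=\matP_{\matU_{\matA\matR,k}}\b$ while the sketched solution satisfies $\matA\tilde{\x}_{\matR,k}=\matB(\matT\matB)^{\pinv}\matT\b$. The technical heart of the proof is the perturbation bound
\[
\TNorm{\matA\tilde{\x}_{\matR,k}-\matA\x_{\matR,k}}=O(\nu)\TNorm{\b}.
\]
Setting $\matQ=\matT\matU_{\matA\matR,k}$ and substituting the thin SVD of $\matB$ into $(\matT\matB)^{\pinv}$, a short manipulation gives the identity
\[
\matA\tilde{\x}_{\matR,k}-\matA\x_{\matR,k}=\matU_{\matA\matR,k}(\matQ^{\T}\matQ)^{-1}\matU_{\matA\matR,k}^{\T}\matT^{\T}\matT\matP_{\matU_{\matA\matR,k}}^{\perp}\b.
\]
The subspace embedding applied to vectors $\matU_{\matA\matR,k}\z$ shows that the spectrum of $\matQ^{\T}\matQ$ lies in $[1-O(\nu),1+O(\nu)]$, so $\matQ$ has full column rank and $\TNorm{(\matQ^{\T}\matQ)^{-1}}=O(1)$. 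For the cross term, observe that $\matP_{\matU_{\matA\matR,k}}^{\perp}\b=\b-\matU_{\matA\matR,k}\matU_{\matA\matR,k}^{\T}\b$ lies in the embedded subspace; applying the polarization identity to unit $\x=\matU_{\matA\matR,k}\z$ and to $\w$ proportional to $\matP_{\matU_{\matA\matR,k}}^{\perp}\b$ (both inside the embedded subspace) gives the approximate-inner-product bound $|\x^{\T}\matT^{\T}\matT\w-\x^{\T}\w|=O(\nu)\TNorm{\x}\TNorm{\w}$, and since $\x^{\T}\w=0$, taking the supremum over unit $\z$ yields $\TNorm{\matU_{\matA\matR,k}^{\T}\matT^{\T}\matT\matP_{\matU_{\matA\matR,k}}^{\perp}\b}=O(\nu)\TNorm{\b}$. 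Combining these two estimates gives the perturbation bound.

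With the perturbation bound in hand, the two parts follow by triangle inequality from Theorem~\ref{thm:structural}. For part~1, Theorem~\ref{thm:structural} already gives $(\nu,\nu)$-approximate PCP for $\matA\x_{\matR,k}$; the perturbation bound together with $\TNorm{\matU_{\matA,k+}^{\T}(\matA\tilde{\x}_{\matR,k}-\matA\x_{\matR,k})}\leq\TNorm{\matA\tilde{\x}_{\matR,k}-\matA\x_{\matR,k}}$ then yields the $(O(\nu),O(\nu))$-approximate PCP for $\matA\tilde{\x}_{\matR,k}$. For part~2, $s=k$ and $\matR^{\T}\matR=\matI_{k}$ make $\matV_{\matA\matR,k}$ a $k\times k$ orthogonal matrix, so $\tilde{\x}_{\matR}=\matR(\matT\matA\matR)^{\pinv}\matT\b$ and $\TNorm{\tilde{\x}_{\matR}-\x_{\matR}}$ equals the norm of the coefficient error. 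Since $\matA\matR$ has full column rank with $\sigma_{\min}(\matA\matR)=\Omega(\sigma_{k})$ by Lemma~\ref{lem:11}, the perturbation bound on $\matA(\tilde{\x}_{\matR}-\x_{\matR})$ yields $\TNorm{\tilde{\x}_{\matR}-\x_{\matR}}=O(\nu/\sigma_{k})\TNorm{\b}$; combining with the feasibility bound on $\x_{\matR}$ from Theorem~\ref{thm:structural} delivers the $(O(\nu),O(\nu/\sigma_{k}))$-approximate PCR guarantee. The probability claim is immediate: the spanning subspace has dimension at most $2k+1$, and standard \noun{CountSketch} results provide an $O(\nu)$-distortion subspace embedding for it with failure probability $\delta$ using $O(k^{2}/(\nu^{2}\delta))$ rows.

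The main obstacle is obtaining the perturbation bound at level $O(\nu)$ rather than the weaker $O(\sqrt{\nu})$ that falls out of a vanilla Pythagoras/sketch-and-solve argument on $\TNormS{\matT\matB\gamma-\matT\b}$. The saving observation is that $\matP_{\matU_{\matA\matR,k}}^{\perp}\b$ remains inside the embedded subspace, which allows the cross-term to be controlled directly via the approximate-inner-product consequence of the subspace embedding rather than via Pythagoras on the residual.
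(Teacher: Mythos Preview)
Your proof is correct and takes a genuinely different, cleaner route than the paper. The paper argues the objective bound via a sketch-and-solve sandwich (optimality for the lower bound, and a chain $\TNorm{\matA\tilde{\x}_{\matR,k}-\b}\leq(1+O(\nu))\TNorm{\matT(\cdot)}\leq\cdots\leq(1+O(\nu))\TNorm{\matA\x_{\matR,k}-\b}$ for the upper bound), then handles the PCP constraint by introducing the auxiliary matrix $\matC=(\matT\matU_{\matA\matR,k})^{\pinv}((\matT\matU_{\matA\matR,k})^{\T})^{\pinv}$ and comparing $\matU_{\matA\matR,k}\matC\matU_{\matA\matR,k}^{\T}$ to $\matU_{\matA,k}\matU_{\matA,k}^{\T}$, and handles the PCR constraint by bounding $\TNorm{\matV_{\matA,k+}^{\T}\matR}$ and $\sigma_{\min}(\matT\matU_{\matA\matR,k}\Sigma_{\matA\matR,k})$ directly. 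You instead isolate a single perturbation identity,
\[
\matA\tilde{\x}_{\matR,k}-\matA\x_{\matR,k}=\matU_{\matA\matR,k}(\matQ^{\T}\matQ)^{-1}\matU_{\matA\matR,k}^{\T}\matT^{\T}\matT\matP_{\matU_{\matA\matR,k}}^{\perp}\b,
\]
bound it by $O(\nu)\TNorm{\b}$ via polarization on the embedded subspace, and then invoke Theorem~\ref{thm:structural} as a black box. This is more modular (the streaming sketch $\matT$ contributes exactly one additive $O(\nu)\TNorm{\b}$ term, and everything else is inherited), gives the two-sided objective bound in one stroke, and, notably, never uses that $\matU_{\matA,k}$ lies in the embedded subspace: your argument only needs $\matT$ to embed $\range{[\matU_{\matA\matR,k}\ \b]}$, a $(k{+}1)$-dimensional subspace, whereas the paper's PCP-constraint chain genuinely uses the larger subspace containing $\matU_{\matA,k}$ as well. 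The paper's approach, on the other hand, avoids inverting $\sigma_{\min}(\matA\matR)$ to pass from $\matA$-space to coefficient space in the PCR constraint, bounding $\TNorm{\matV_{\matA,k+}^{\T}\tilde{\x}_{\matR}}$ directly; your route goes through $\TNorm{\tilde{\x}_{\matR}-\x_{\matR}}$, but Lemma~\ref{lem:11} makes that harmless.
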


\begin{proof}
We need to show both the additive error bounds on the objective function,
and the error bound on the constraints. We start with the additive
error bounds on the objective function for both for PCP (first part
of the theorem) and for PCR (second part of the theorem). The lower
bound on $\TNorm{\matA\tilde{\x}_{\matR,k}-\b}$ follows immediately
from the fact that $\tilde{\x}_{\matR,k}\in\range{\matR\matV_{\matA\matR,k}}$
and the fact that $\x_{\matR,k}$ is a minimizer of $\TNorm{\matA\x-\b}$
subject to $\x\in\range{\matR\matV_{\matA\matR,k}}$. For the upper
bound, we observe 
\begin{eqnarray*}
\TNorm{\matA\tilde{\x}_{\matR,k}-\b} & \leq & \left(1+O(\nu)\right)\TNorm{\matT\matA\matR\matV_{\matA\matR,k}(\matT\matA\matR\matV_{\matA\matR,k})^{+}\matT\b-\matT\b}\\
 & \leq & \left(1+O(\nu)\right)\TNorm{\matT\matA\matR\matV_{\matA\matR,k}(\matA\matR\matV_{\matA\matR,k})^{+}\b-\matT\b}\\
 & \leq & \left(1+O(\nu)\right)\TNorm{\matA\x_{\matR,k}-\b}\\
 & \leq & \TNorm{\matA\x_{\matR,k}-\b}+O(\nu)\TNorm{\b}
\end{eqnarray*}
where in the first and third inequality we used the fact that $\matT$
provides a subspace embedding for $\range{[\matA\matR\matV_{\matA\matR,k}\,\b]}$
and in the second inequality we used the fact that $(\matT\matA\matR\matV_{\matA\matR,k})^{+}\matT\b$
is a minimizer of $\TNorm{\matT\matA\matR\matV_{\matA\matR,k}\x-\matT\b}$.
Bounds on $\TNorm{\matA\x_{\matR,k}-\b}$ (Theorem~\ref{thm:structural})
now imply the additive bound. 

We now bound the constraint for the PCR guarantee (second part of
the theorem). Let
\[
\matC=(\matT\matU_{\matA\matR,k})^{+}((\matT\matU_{\matA\matR,k})^{\T})^{+}\,.
\]
 Since $(\matT\matU_{\matA\matR,k})^{\T}$ and $(\matT\matU_{\matA\matR,k})^{+}$
have the same row space, and $\matT\matU_{\matA\matR,k}$ has more
rows than columns, $\matC$ is non-singular and we have $\matC(\matT\matU_{\matA\matR,k})^{\T}=(\matT\matU_{\matA\matR,k})^{+}$.
Since $\matT$ provides a subspace embedding for $\matU_{\matA\matR,k}$,
all the singular values of $\matT\matU_{\matA\matR,k}$ belong to
the interval $[1-O(\nu),1+O(\nu)]$. We conclude that $\TNorm{\matC-\matI_{k}}\leq O(\nu)$.
We also have $(\matT\matU_{\matA\matR,k}\mat{\Sigma}_{\matA\matR,k})^{+}=\mat{\Sigma}_{\matA\matR,k}^{-1}(\matT\matU_{\matA\matR,k})^{+}$
since $\matT\matU_{\matA\matR,k}$ has linearly independent columns
(since it provides a subspace embedding), and $\mat{\Sigma}_{\matA\matR,k}$
has all linearly independent rows. Thus, 

\begin{eqnarray*}
\TNorm{\matU_{\mat A,k+}^{\T}\matA\tilde{\x}_{\matR,k}} & = & \TNorm{\matU_{\mat A,k+}^{\T}\matA\tilde{\x}_{\matR,k}-\matU_{\mat A,k+}^{\T}\matU_{\mat A,k}\matU_{\mat A,k}^{\T}\matT^{\T}\matT\b}\\
 & \leq & \TNorm{\matA\tilde{\x}_{\matR,k}-\matU_{\mat A,k}\matU_{\mat A,k}^{\T}\matT^{\T}\matT\b}\\
 & = & \TNorm{\matU_{\matA\matR,k}(\matT\matU_{\matA\matR,k})^{+}\matT\b-\matU_{\mat A,k}\matU_{\mat A,k}^{\T}\matT^{\T}\matT\b}\\
 & = & \TNorm{\matU_{\matA\matR,k}\matC(\matT\matU_{\matA\matR,k})^{\T}\matT\b-\matU_{\mat A,k}\matU_{\mat A,k}^{\T}\matT^{\T}\matT\b}\\
 & \leq & \left(1+O(\nu)\right)\cdot\TNorm{\matU_{\matA\matR,k}\matC\matU_{\matA\matR,k}^{\T}\matT^{\T}-\matU_{\mat A,k}\matU_{\mat A,k}^{\T}\matT^{\T}}\cdot\TNorm{\b}\\
 & \leq & \left(1+O(\nu)\right)^{2}\cdot\TNorm{\matU_{\matA\matR,k}\matC\matU_{\matA\matR,k}^{\T}-\matU_{\mat A,k}\matU_{\mat A,k}^{\T}}\TNorm{\b}\\
 & \leq & \left(1+O(\nu)\right)\cdot\left(\TNorm{\matU_{\matA\matR,k}\left(\matC-\matI_{k}\right)\matU_{\matA\matR,k}^{\T}}+\TNorm{\matU_{\matA\matR,k}\matU_{\matA\matR,k}^{\T}-\matU_{\mat A,k}\matU_{\mat A,k}^{\T}}\right)\cdot\TNorm{\b}\\
 & \leq & \left(1+O(\nu)\right)\cdot\left(\TNorm{\matU_{\matA\matR,k}\left(\matC-\matI_{k}\right)\matU_{\matA\matR,k}^{\T}}+\nu\right)\cdot\TNorm{\b}\\
 & = & \left(1+O(\nu)\right)\cdot\left(\TNorm{\left(\matC-\matI_{k}\right)}+\nu\right)\cdot\TNorm{\b}\\
 & \leq & \left(1+O(\nu)\right)\cdot\left(O(\nu)+\nu\right)\cdot\TNorm{\b}\\
 & = & O(\nu)\cdot\TNorm{\b}
\end{eqnarray*}

We now bound the constraint for the PCR guarantee (second part of
the theorem). To that end, and observe:

\begin{eqnarray*}
\TNorm{\matV_{\mat A,k+}^{\T}\tilde{\x}_{\matR,k}} & \leq & \TNorm{\matV_{\mat A,k+}^{\T}\matR\matV_{\matA\matR,k}(\matT\matA\matR\matV_{\matA\matR,k})^{+}\matT\b}\\
 & \leq & \TNorm{\matV_{\mat A,k+}^{\T}\matR}\cdot\TNorm{(\matT\matU_{\matA\matR,k}\mat{\Sigma}_{\matA\matR,k})^{+}\matT\b}\\
 & \leq & \frac{\nu\cdot\left(1+O(\nu)\right)\cdot\TNorm{\b}}{\sigma_{\min}\left(\matT\matU_{\matA\matR,k}\mat{\Sigma}_{\matA\matR,k}\right)}\\
 & \leq & \frac{\nu\cdot\left(1+O(\nu)\right)\TNorm{\b}}{\left(1-O(\nu\right))\sigma_{\min}\left(\matU_{\matA\matR,k}\mat{\Sigma}_{\matA\matR,k}\right)}\\
 & \leq & \frac{O(\nu)\cdot\TNorm{\b}}{\sigma_{\min}\left(\matA\matR\right)}\\
 & \leq & \frac{O(\nu)}{\sigma_{k}}\cdot\TNorm{\b}
\end{eqnarray*}
where we used the fact that $\matT$ provides a subspace embedding
for $\range{\left[\begin{array}{cc}
\matU_{\matA\matR,k} & \b\end{array}\right]}$, and used Lemma~\ref{lem:11} to bound $\TNorm{\matV_{\mat A,k+}^{\T}\matR}$
and $\TNorm{(\matA\matR)^{+}}$.
\end{proof}

\subsection{Approximate Kernel PCR}

For simplicity, we consider only the homogeneous polynomial kernel
${\cal K}(\x,\z)=(\x^{\T}\z)^{q}$. The results trivially extend to
the non-homogeneous polynomial kernel ${\cal K}_{n}(\x,\z)=(\x^{\T}\z+c)^{q}$
by adding a single feature to each data point. We leave to future
work the development of similar techniques for other kernels (e.g.
Gaussian kernel).

Let $\phi:\R^{d}\to\R^{d^{q}}$ be the function that maps a vector
$\z=(z_{1},\dots,z_{d})$ to the set of monomials formed by multiplying
$q$ entries of $\z$, i.e. $\phi(\z)=(z_{i_{1}}z_{i_{2}}\cdots z_{i_{q}})_{i_{1},\dots,i_{q}\in\{1,\dots,d\}}$.
For a data matrix $\matA\in\R^{d}$ and a response vector $\b\in\R^{n}$,
let $\Phi\in\R^{n\times d^{q}}$ be the matrix obtained by applying
$\phi$ to the rows of $\matA$, and consider computing the rank $k$
PCR solution $\Phi$ and $\b$, which we denote by $\x_{{\cal K},k}$.
The corresponding prediction function is $f_{{\cal K},k}(\z)\coloneqq\phi(\z)^{\T}\x_{{\cal K},k}$.
While $\x_{{\cal K},k}$ is likely a huge vector (since $\x_{{\cal K},k}\in\R^{d^{q}}$),
and thus expensive to compute, in kernel PCR we are primarily interested
in having an efficient method to compute $f_{{\cal K},k}(\z)$ given
a 'new' $\z$. We can accomplish this via the kernel trick, as we
now show. 

We assume that $\Phi$ has full row rank (this holds if all data points
are different). Let $\a_{1},\dots,\a_{n}$ be the rows of $\matA$.
As usual with PCR, we have $\x_{{\cal K},k}=\matV_{\Phi,k}\Sigma_{\Phi,k}^{-1}\matU_{\Phi,k}^{\T}\b$.
Since $\matV_{\Phi,k}=\Phi^{\T}\matU_{\Phi,k}\Sigma_{\Phi,k}^{-1}$
we have 
\begin{equation}
f_{{\cal K},k}(\z)=\phi(\z)^{\T}\Phi^{\T}\matU_{\Phi,k}\Sigma_{\Phi,k}^{-2}\matU_{\Phi,k}\b=({\cal K}(\z,\a_{1})\,\cdots\,{\cal K}(\z,\a_{n}))\alpha_{{\cal K},k}\label{eq:kpcr}
\end{equation}
where $\alpha_{{\cal K},k}\coloneqq\matU_{\Phi,k}\Sigma_{\Phi,k}^{-2}\matU_{\Phi,k}^{\T}\b$.
In the above, we used the fact that for any $\x$ and $\z$ we have
$\phi(\x)^{\T}\phi(\z)=(\x^{\T}\z)^{q}={\cal K}(\x,\z)$. Let $\matK\in\R^{n\times n}$
be the\emph{ kernel matrix} (also called \emph{Gram matrix}) defined
by $\matK_{ij}={\cal K}(\a_{i},\a_{j}$). It is easy to verify that
$\matK=\Phi\Phi^{\T}$, so we can compute $\matK$ in $O(n^{2}(d+q))$
(and without forming $\Phi$, which is a huge matrix). We also have
$\matK=\matU_{\Phi}\Sigma_{\Phi}^{2}\matU_{\Phi}^{\T}$ so $\alpha_{k}=\matU_{\matK,k}\Sigma_{K,k}^{-1}\matU_{\matK,k}^{\T}\b$.
Thus, we can compute $\alpha_{{\cal K},k}$ in $O(n^{2}(d+q+n))$
time. Once we have computed $\alpha_{k}$, using~(\ref{eq:kpcr})
we can compute $f_{{\cal K},k}(\z)$ for any $\z$ in $O(ndq)$ time. 

In order to compute an approximate kernel PCR, we introduce a right
sketching matrix $\matR\in\R^{d^{q}\times t}$. Such a matrix $\matR$
is frequently referred to, in the context of kernel learning, as a
randomized feature map. We use the \noun{TensorSketch} feature map~\cite{Pagh13,PP13}.
The feature map is defined as follows. We first randomly generate
$q$ $3$-wise independent hash functions $h_{1},\dots,h_{q}\in\{1,\dots,d\}\to\{1,\dots,t\}$
and $q$ $4$-wise independent sign functions $g_{1},\dots,g_{q}:\{1,\dots,d\}\to\{-1,+1\}$.
Next, we define $H:\{1,\dots,d\}^{q}\to\{1,\dots,t\}$ and $G:\{1,\dots,t\}^{q}\to\{-1,+1\}$:
\[
H(i_{1},\dots,i_{q})\coloneqq h_{1}(i_{1})+\dots+h_{q}(i_{q})\,\mod\,t
\]
\[
G(i_{1},\dots,i_{q})=g_{1}(i_{1})\cdot g_{2}(i_{2})\cdot\dots\cdot g_{q}(i_{q})
\]
 To define $\matR$, we index the rows of $\matR$ by $\{1,\dots,d\}^{q}$
and set row $(i_{1},\dots,i_{q})$ to be equal to $G(i_{1},\dots,i_{q})\cdot\e_{H(i_{1},\dots,i_{q})}$,
where $\e_{j}$ denote the $j$th identity vector. A crucial observation
that makes \noun{TensorSketch} useful, is that via the representation
using $h_{1},\dots,h_{q}$ and $g_{1},\dots,g_{q}$ we can compute
$\matR^{\T}\phi(\z)$ in time $O(q(\nnz{\z}+t\log t))$ (see Pagh~\cite{Pagh13}
for details). Thus, we can compute $\Phi\matR$ in time $O(q(\nnz{\matA}+nt\log t))$.

Consider right sketching PCR on $\Phi$ and $k$ with a \noun{TensorSketch}
$\matR$ as the sketching matrix. The approximate solution is 
\[
\x_{{\cal K},\matR,k}\coloneqq\matR\matV_{\Phi\matR,k}(\Phi\matR\matV_{\Phi\matR,k})^{\pinv}\b=\matR\gamma_{{\cal K},\matR,k}
\]
 where $\gamma_{{\cal K},\matR,k}\coloneqq\matV_{\Phi\matR,k}(\Phi\matR\matV_{\Phi\matR,k})^{\pinv}\b$.
We can compute $\gamma_{\matR,k}$ in $O(q(\nnz{\matA}+nt\log t)+nt^{2})$
time. The predication function is 
\[
f_{{\cal K},\matR,k}(\z)\coloneqq\phi(\z)^{\T}\x_{{\cal K},\matR,k}=(\matR^{\T}\phi(\z))^{\T}\gamma_{{\cal K},\matR,k}
\]
so once we have $\gamma_{{\cal K},\matR,k}$ we can compute $f_{{\cal K},\matR,k}(\z)$
in $O(q(\nnz{\z}+t\log t))$ time. Thus, the method is attractive
from a computational complexity point of view if $t\ll n$ or $d\gg n$
and $d\gg t$. The following theorem bound the excess risk of $\x_{{\cal K},\matR,k}$. 
\begin{thm}
Let $(\nu,\delta)\in(0,1/2)$. Let $\lambda_{1}\geq\dots\geq\lambda_{n}$
denote the eigenvalues of $\matK$. If $\matR$ is a \noun{TensorSketch}
matrix with 
\[
t=\Omega\left(\frac{3^{q}\Trace{\matK}^{2}}{(\lambda_{k}-\lambda_{k+1})^{2}\nu^{2}\delta}\right)
\]
columns, then with probability of at least $1-\delta$
\[
{\cal E}(\x_{{\cal K},\matR,k})\leq{\cal E}(\x_{{\cal K},k})+\frac{(2\nu+\nu^{2})\TNormS{\f}}{n}
\]
where $\f$ is the expected value of $\b$ (recall the statistical
framework in section~\ref{subsec:statistical}).
\end{thm}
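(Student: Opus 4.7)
The plan is to mirror the proof of Theorem~\ref{thm:right_sketching} but in feature space, using $\Phi$ in place of $\matA$ and a known approximate matrix product guarantee for \noun{TensorSketch} in place of the sub\-gaussian/\noun{CountSketch} Gram bounds. By Theorem~\ref{thm:struct-stat-pcp} applied to $\Phi$, $\b$, and the sketching matrix $\matR$, it suffices to show that with probability at least $1-\delta$ we have $d_{2}\!\left(\matU_{\Phi\matR,k},\matU_{\Phi,k}\right)\leq\nu$; the excess risk statement then follows because ${\cal E}(\x_{{\cal K},\matR,k})$ is exactly the excess risk of the PCR estimator built from $\Phi\matR$.

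First, I would invoke the approximate matrix product / approximate Gram guarantee for \noun{TensorSketch} (as established for polynomial kernels, e.g.\ in the \noun{TensorSketch} analyses of Pagh and Pham--Pagh, and sharpened in later work on oblivious subspace embeddings for polynomial kernels). That guarantee says: for the fixed matrix $\Phi^{\T}$, provided $t=\Omega(3^{q}\sr{\Phi}^{2}/(\epsilon^{2}\delta))$, we have
\[
\Pr\!\left(\TNorm{\Phi\matR\matR^{\T}\Phi^{\T}-\Phi\Phi^{\T}}\leq\epsilon\TNormS{\Phi}\right)\geq 1-\delta.
\]
Now $\TNormS{\Phi}=\lambda_{1}$, $\FNormS{\Phi}=\Trace{\matK}$, so $\sr{\Phi}=\Trace{\matK}/\lambda_{1}$, and I would choose
\[
\epsilon=\frac{\nu}{1+\nu}\cdot\frac{\lambda_{k}-\lambda_{k+1}}{\lambda_{1}},
\]
which makes the required $t$ exactly of the order stated in the theorem (up to the absorbed $(1+\nu)^{2}=O(1)$ factor for $\nu\in(0,1/2)$).

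Second, I would apply Theorem~\ref{thm:sin-theta-1} to the symmetric matrices $\matK=\Phi\Phi^{\T}$ and $\tilde{\matK}=\Phi\matR\matR^{\T}\Phi^{\T}$. Exactly as in the proof of Theorem~\ref{thm:right_sketching}, Weyl's inequality together with the above spectral bound yields $\rank{\tilde\matK}\geq k$ and
\[
d_{2}(\matU_{\tilde{\matK},k},\matU_{\matK,k})\leq\frac{\epsilon\lambda_{1}}{(\lambda_{k}-\lambda_{k+1})-\epsilon\lambda_{1}}\leq\nu.
\]
Since $\matU_{\Phi,k}=\matU_{\matK,k}$ and $\matU_{\Phi\matR,k}=\matU_{\tilde{\matK},k}$ (the top-$k$ left singular vectors of a matrix and of its Gram matrix coincide), this gives $d_{2}(\matU_{\Phi\matR,k},\matU_{\Phi,k})\leq\nu$, which is exactly the hypothesis of Theorem~\ref{thm:struct-stat-pcp}. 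Invoking that theorem finishes the proof.

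The main obstacle is the first step: the right \noun{TensorSketch} approximate Gram inequality is not derived in the paper and must be imported cleanly from the polynomial-kernel sketching literature, where it appears either as a second-moment bound on $\FNormS{\Phi\matR\matR^{\T}\Phi^{\T}-\Phi\Phi^{\T}}$ (which upper-bounds the spectral norm) combined with a Markov argument giving the $1/\delta$ dependence, or as a direct spectral-norm bound. The $3^{q}$ factor and the $\sr{\Phi}^{2}/(\epsilon^{2}\delta)$ scaling are precisely what such an analysis produces. Everything after that is a mechanical reuse of Theorems~\ref{thm:sin-theta-1} and~\ref{thm:struct-stat-pcp}, essentially identical to the right-sketching argument already carried out for the linear case.
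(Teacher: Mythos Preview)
Your proposal is correct and follows essentially the same route as the paper. The paper's proof simply invokes Theorem~\ref{thm:right_sketching} applied to $\Phi$ (noting that the squared singular values of $\Phi$ are the eigenvalues of $\matK$), which in turn rests on exactly the Davis--Kahan step and the reduction to Theorem~\ref{thm:struct-stat-pcp} that you spell out; the only cosmetic difference is that the paper obtains the approximate Gram property for \noun{TensorSketch} by citing the OSE-moment analysis of Avron et al.\ combined with the stable-rank bounds of Cohen et al., rather than the Frobenius-plus-Markov route you mention as one option.
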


Before proving this theorem, we remark that the bound on the size
of the sketch is somewhat disappointing in the sense that it is useful
only if $d\gg n$ (since $\Trace{\matK}$ is likely to be large).
However, this is only a bound, and possibly a pessimistic one. Furthermore,
once the feature expanded data has been embedded in Euclidean space
(via \noun{TensorSketch}), it can be further compressed using standard
Euclidean space transforms like \noun{CountSketch} and subgaussian
maps (this is sometimes referred to as two-level sketching), or compression
can be applied from the left. We leave the task of improving the bound
and exploring additional compression techniques to future research.
\begin{proof}
The square singular values of $\Phi$ are exactly the eigenvalues
of $\matK$, so Theorem~\ref{thm:right_sketching} asserts that the
conclusions of the theorem hold if $\matR^{\T}$ provides $(\epsilon,\delta)$-approximate
Gram matrix for $\Phi$ where $\epsilon=O(\nu(\lambda_{k}-\lambda_{k+1})/\lambda_{1})$.
To that end, we combine the analysis of Avron et al.~\cite{ANW14}
of \noun{TensorSketch} with more recent results due to Cohen et al.~\cite{cohen2015optimal}.
Although not stated as a formal theorem, as part of a larger proof,
Avron et al. show that \noun{TensorSketch} has an OSE-moment property
that together with the results of Cohen et al.~\cite{cohen2015optimal}
imply that indeed the $(\epsilon,\delta)$-approximate Gram property
holds for the specified amounts of columns in $\matR$.
\end{proof}

\section{\label{sec:experiments}Experiments}

In this section we report experimental results, on real data, that
illustrate and support the main results of the paper, and demonstrate
the ability of our algorithms to find appropriately regularized solutions. 

\paragraph{Datasets. }

We experiment with three datasets, two regression datasets (\emph{Twitter
Buzz} and \emph{E2006-tfidf}) and one classification dataset (\emph{Gisette}).

\emph{Twitter Social Media Buzz~\cite{kawala2013predictions}} is
a regression dataset in which the goal is to predict the popularity
of topics as quantified by its mean number of active discussions given
77 predictor variables such as number of authors contributing to the
topic over time, average discussion lengths, number of interactions
between authors etc. We pre-process the data in a manner similar to
previous work \cite{lu2014fast,slawski2017compressed}. That is, several
of the original predictor variables, as well as the response variable
are log-transformed prior to analysis. We then center and scale to
unit norm. Finally, we add quadratic interactions which yielding a
total of $3080$ predictor variables (after preprocessing, the data
matrix is $583250\textrm{-by-}3080$). We used this dataset to explore
only sub-optimality of the objective and constraint satisfaction,
as we have found that the generalization error is very sensitive to
selection of the test set (when splitting a subset of the data to
training and testing).

\emph{E2006-tfidf}~\cite{kogan2009predicting} is regression dataset
where the features are extracted from SEC-mandated financial reports
published annually by a publicly traded company, and the quantity
to be predicted is volatility of stock returns, an empirical measure
of financial risk. We use the standard training-test split available
with the dataset\footnote{We downloaded the dataset from the LIBSVM website, \url{https://www.csie.ntu.edu.tw/~cjlin/libsvmtools/datasets/}.}.
We use this dataset only for testing generalization. The only pre-processing
we performed was subtracting the mean from the response variable,
and reintroducing it when issuing predictions. 

The\emph{ Gisette} dataset is a binary classification dataset that
is a constructed from the MNIST dataset. The goal is to separate the
highly confusable digits '4' and '9'. The dataset has $6000$ data-points,
each having $5000$ features. We use the standard training-test split
available with the dataset (this dataset was downloaded from the same
website as the E2006-tfidf dataset). We convert the binary classification
problem to a regression problem using standard techniques (regularized
least squares classification). We use this dataset only for testing
generalization.

\paragraph{Baselines.}

A first reference are the performance of plain PCR. For small problems,
the dominant right subspace needed to compute the PCR solution can
be computed via MATLAB's dense SVD routine. For larger problems, we
compute the dominant right subspace using a PRIMME~\cite{SM10,WRS17},
a state-of-the-art iterative algorithm for SVD. As additional reference,
we also report results of two alternative algorithms: CLS and the
iterative algorithm of Frostig et al.~\cite{frostig2016principal}.
Both in the discussion, and in the graphs, we refer to the algorithm
Frostig et al.~ as ``Iterative-PCR''. We use the implementation
of Iterative-PCR supplied by the authors,\footnote{\url{https://github.com/cpmusco/fast-pcr}}
for which we used the default parameters, except for the ``tol''
parameter, which we set to $10^{-6}$ instead of the default $10^{-3}$.
We found that the use of tol=$10^{-3}$ produces results that generalize
poorly, while the use of tol=$10^{-6}$ produces much better results.
However, the running time of Iterative-PCR with tol=$10^{-6}$ is
considerably higher the the running time for tol=$10^{-3}$. Iterative-PCR
controls singular vector truncation via a cut-off parameter $\lambda$,
while in our experiments we set $k$ (the number of principal components
that are kept). We achieve this effect by setting $\lambda=(\sigma_{k}^{2}(\matA)+\sigma_{k+1}^{2}(\matA))/2$
(when we report running times, we do not include the time to compute
the singular values). Finally, we remark that based of the documentation,
the algorithm analyzed by Frostig et al.~\cite{frostig2016principal}
does not completely correspond to the default parameters of the implementation
of Iterative-PCR supplied by the authors (e.g., the default parameter
for the ``method'' parameter is ``LANCZOS'', while ``EXPLICIT''
corresponds the algorithm analyzed in \cite{frostig2016principal}). 

\paragraph{Sub-optimality of objective and constraint satisfaction.}

We explore the Twitter Buzz dataset from the optimization perspective,
namely measure the sub-optimality in the objective (vs. PCR) and constraint
satisfaction. Since $n\gg d$, we use left sketching with subgaussian
maps. We perform each experiment five times and report the median
value. Error bars, when present, represent the minimum and maximum
value of five runs. In the top panel, we use a fixed $k=60$ and vary
the sketch size (left sketching only), while in the bottom panel we
vary $k$ and set sketch size to be $s=4k$. The left panel explores
the value of the objective function, appropriately normalized (divided
by $\TNorm{\matA\x_{k}-\b}$ for fixed $k$, and divided by $\TNorm{\b}$
for varying $k$). The right panel explores the regularization effect
by examining the value of the constraints $\TNorm{\matV_{\matA,k}^{\T}\x_{k}}/\TNorm{\b}$. 

In the left panel, we see that value of the objective for the sketched
PCR solution follows the value of objective for the PCR solution.
In general, as the sketch size increases, the variance in the objective
value reduces (top left graph). The normalized value of the constraint
for sketched PCR is rather small (as a reference we note that $\TNorm{\matV_{\matA,k}^{\T}\x_{OLS}}/\TNorm{\b}=0.4165$),
and generally decreases when the sketch size increases (top right
graph), but increases with $k$ for a fixed ratio between $s$ and
$k$ (bottom right graph). Furthermore, the results of sketched PCR
are very similar to the results of iterative PCR (bottom panel), while
running time is considerably shorter (see Table \ref{tab:time}). 

The role of the constraints as a regularizer are illustrated by the
results for CLS (for fixed $k$ we use $t=4k$). As expected, CLS
achieves lower objective value at the price of larger constraint infeasibility.
The values of $\TNorm{\matV_{\matA,k}^{\T}\x_{CLS}}/\TNorm{\b}$ are
much smaller than the OLS value, but much larger than the values for
sketched PCR. Furthermore, it is hard to control the regularization
effect for CLS: when sketch size increases the objective decreases
and the constraint increases (compare to PCR and sketched PCR, top
panel). 

\begin{figure}
\noindent \begin{centering}
\begin{tabular}{ccc}
\includegraphics[width=0.45\textwidth]{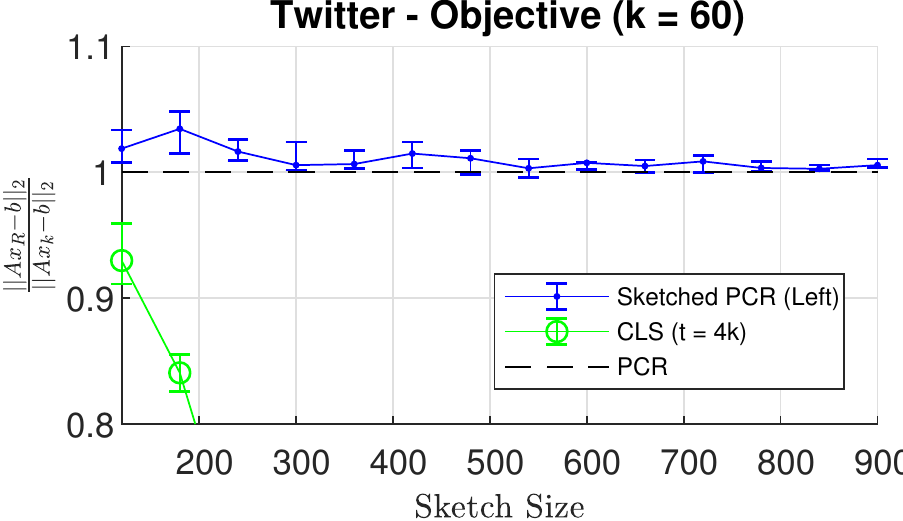} & ~ & \includegraphics[width=0.45\textwidth]{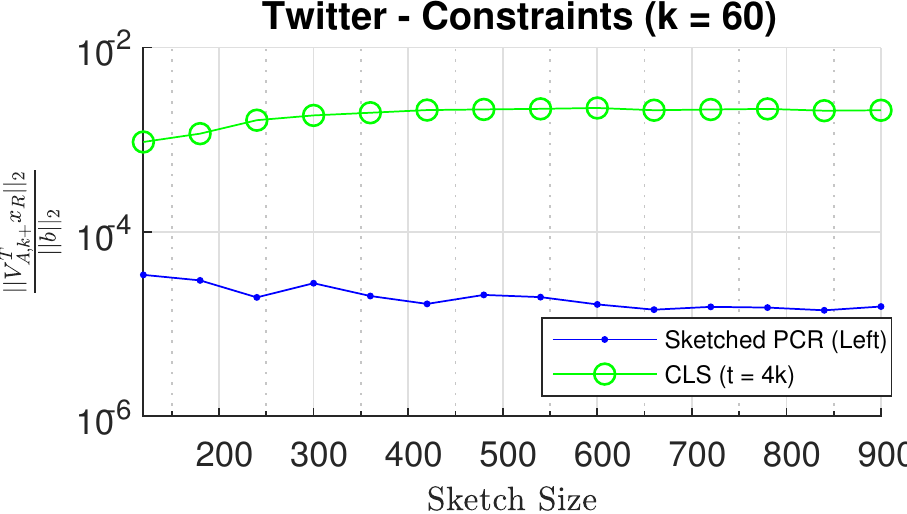}\tabularnewline
\includegraphics[width=0.45\textwidth]{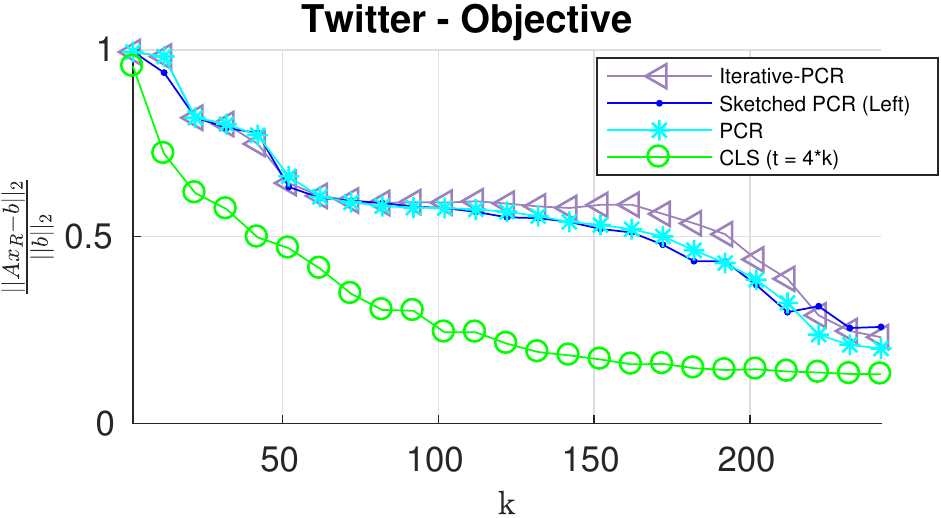} &  & \includegraphics[width=0.45\textwidth]{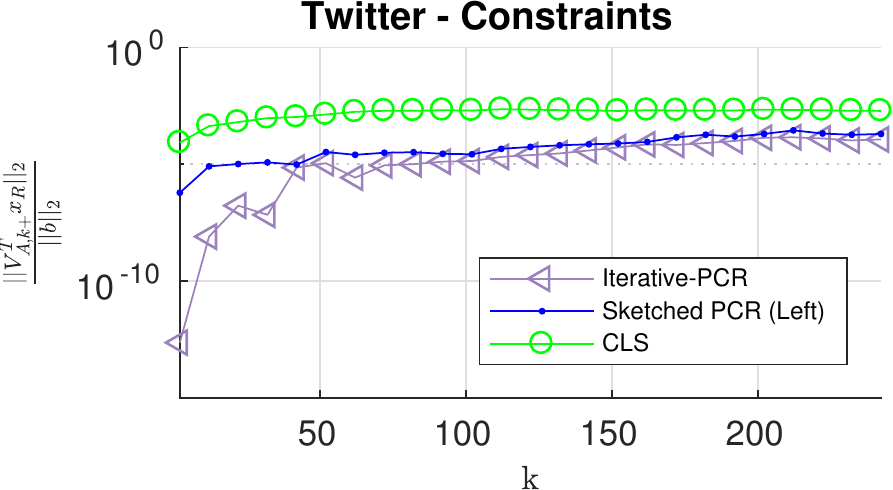}\tabularnewline
\end{tabular}
\par\end{centering}
\caption{\label{fig:twitter}Sub-optimality of objective and constraint satisfaction
for the Twitter Buzz dataset. }
\end{figure}

\paragraph{Generalization results.}

We also explored the prediction error and the tradeoffs between compression
and regularization. We perform each experiment five times and report
the median value. Error bars, when present, represent the minimum
and maximum value of those five runs. 

We report the Mean Square Error (MSE) of predictions for the E2006.tfidf
dataset in Figure~\ref{fig:E2006}. We compare CLS, iterative-PCR,
right sketching and two-sided sketching (the matrix is too large for
exact PCR, and $d\gg n$ so right sketching is more appropriate).
In the left panel we fix $k=600$ and vary the sketch size. The MSE
decreases as the sketch size increases for both sketching methods.
For CLS, initially the MSE decreases and is close to the MSE of the
two sketching methods, but for large sketch sizes the MSE starts to
go up, likely due to decreased level of regularization. We note that
the minimum MSE achieved by CLS is larger than achieved by both sketching
methods. A similar phenomenon is observed when we vary the value of
$k$ in the right panel. 

\begin{figure}
\noindent \begin{centering}
\begin{tabular}{ccc}
\includegraphics[width=0.45\textwidth]{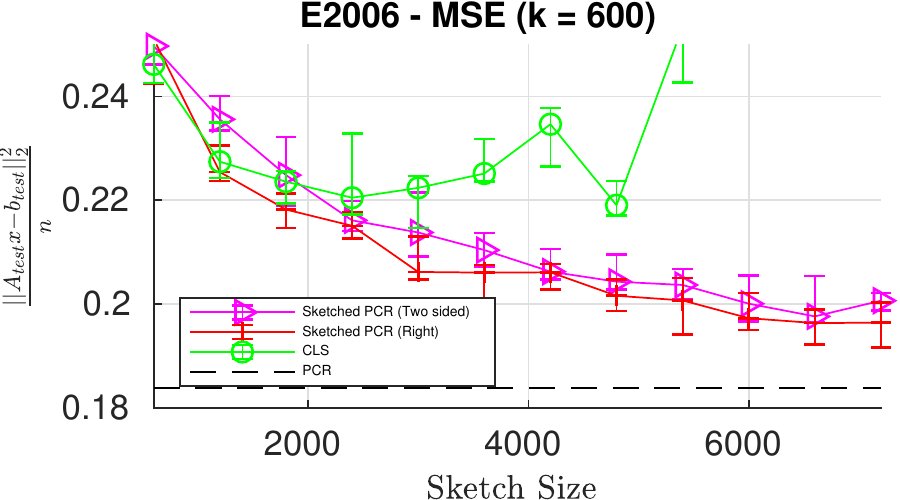} & ~ & \includegraphics[width=0.45\textwidth]{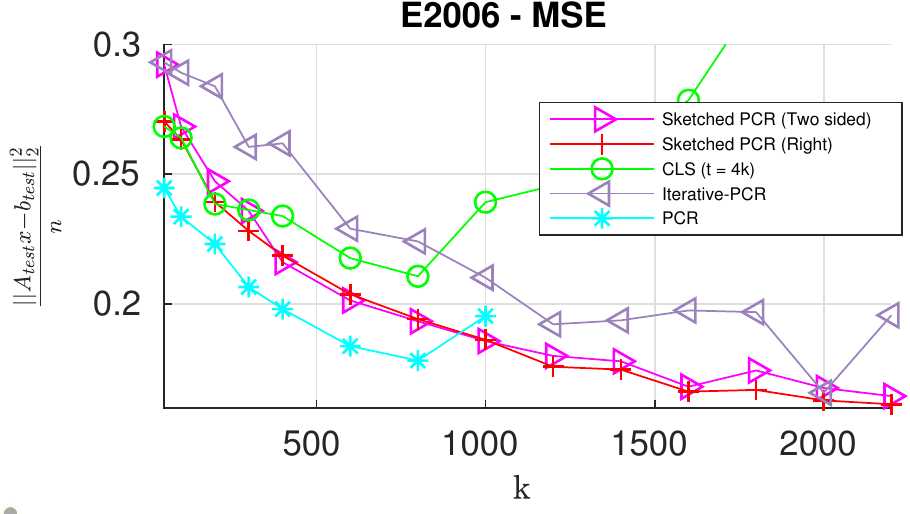}\tabularnewline
\end{tabular}
\par\end{centering}
\caption{\label{fig:E2006}Mean squared error of predictions for the E2006.tfidf
dataset. }
\end{figure}

We report the classification error for the Giesette dataset in Figure~\ref{fig:gisette}.
In the left panel we fix $k=400$ and vary the sketch size. As a reference,
the error rate of exact PCR (with $k=400$) is $2.8\%$ and the error
rate for OLS is 9.3\%. Left sketching has error rate very close to
the error rate of exact PCR, especially when $s$ is large enough.
Right sketching does not perform as well as left sketching, but it
too achieves low error rate for large $s$. For both methods, the
error rate drops as the sketch sizes increase, and the variance reduces.
For CLS the error rate and variance initially drops as the sketch
size increases, but eventually, when sketch size is large enough,
the error rate and the variance increases. This is hardly surprising:
as the sketch size increase, CLS approaches OLS. This is due to the
fact that CLS uses the compression to regularize, and when the sketch
size is large there is little regularization. In the right panel,
we vary the value of $k$ and set $s=4k$ (left sketching) and $t=4k$
(right sketching and CLS). Left sketch and PCR consistently achieve
about the same error rate. For small values of $t$, CLS performs
well, but when $t$ is too large the error starts to increase. In
contrast, right sketching continues to perform well with large values
of $k$. Again, we see that CLS mixes compression and regularization,
and one cannot use a large sketch size and modest amount of regularization
with CLS. 

\begin{figure}
\noindent \begin{centering}
\begin{tabular}{ccc}
\includegraphics[width=0.45\textwidth]{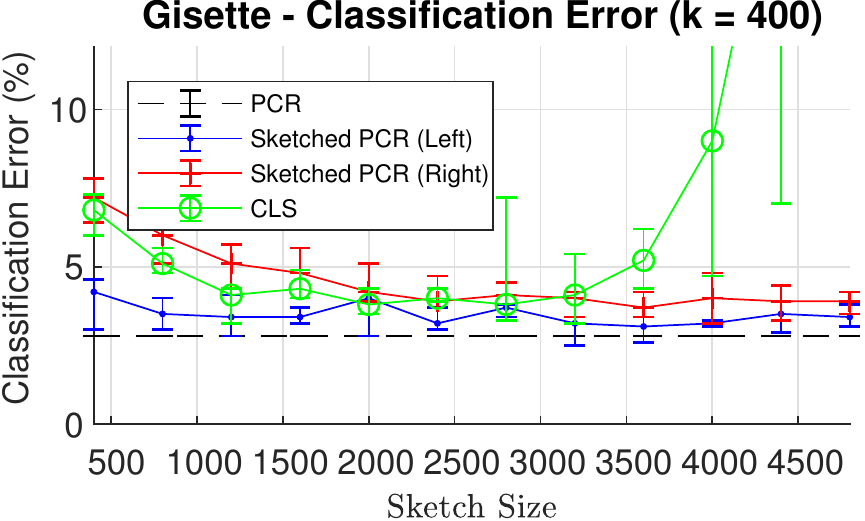} & ~ & \includegraphics[width=0.45\textwidth]{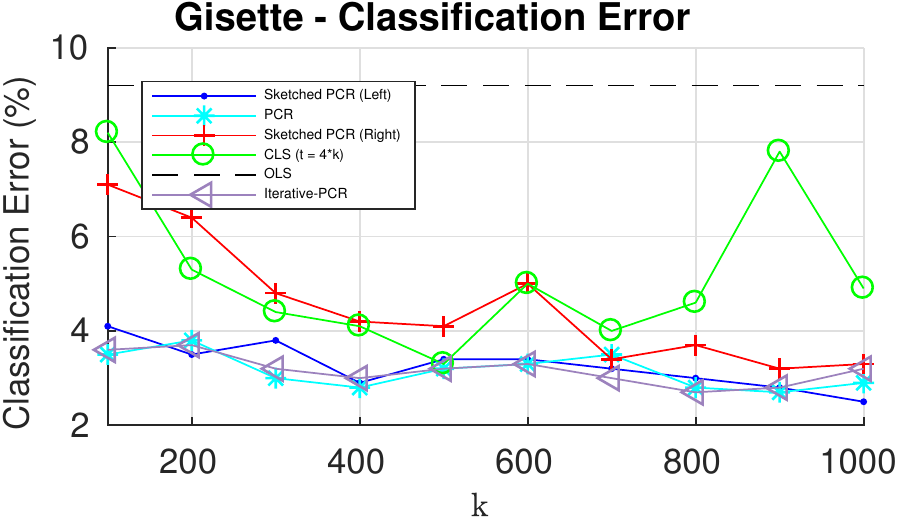}\tabularnewline
\end{tabular}
\par\end{centering}
\caption{\label{fig:gisette}Classification error for the Gisette dataset. }
\end{figure}

\paragraph{Running time. }

In Table~\ref{tab:time} we report a sample of the various running
times of the different algorithms. All experiments were conducted
using MATLAB, although the sketching routines were written in C. Running
times were measured on a machine with a 6-core Intel Xeon Processor
E5-1650 v4 CPU and 128 GB of main memory, running Ubuntu 16.04. For
plain PCR, we report running time using PRIMME, which we ran with
default parameters and no preconditioner. For PRIMME, we cap the number
of iterations at 100,000, and write ``FAIL'' in the table if the
PRIMME failed to convergence within that cap. For iterative PCR we
also report running times when we set tol to the default value, and
reduce the max number of iteration from 40 to 10. This results in
much faster running time, but much degraded generalization (not reported),
e.g. for E2006 the test MSE for Iter-PCR (tol=$10^{-3}$, iter=10)
is 0.32. With respect to running time, iterative-PCR is competitive
with sketched PCR only for E2006, but with worse classification error.
Using PRIMME for PCR is not competitive with sketched PCR. However,
we stress that we experimented with only three datasets, so the comparison
is not comprehensive.

\begin{table}
\centering{}\caption{\label{tab:time}Running times (in seconds). For sketched PCR, we
report in brackets the type of sketching used (left, right, or two-sided).}
{\footnotesize{}}%
\begin{tabular}{|c|c|c|>{\centering}p{2.4cm}|>{\centering}p{2.4cm}|c|}
\hline 
 & {\footnotesize{}CLS ($t=400)$} & {\footnotesize{}PRIMME-PCR} & {\footnotesize{}Iter-PCR }{\footnotesize \par}

{\footnotesize{}(tol=$10^{-3}$, iter=10)} & {\footnotesize{}Iter-PCR (tol=$10^{-6}$)} & {\footnotesize{}Sketched-PCR}\tabularnewline
\hline 
\hline 
{\footnotesize{}Twitter, $k=82$} & {\footnotesize{}21.2} & {\footnotesize{}1730} & {\footnotesize{}742} & {\footnotesize{}4067} & {\footnotesize{}4.7 (left)}\tabularnewline
\hline 
{\footnotesize{}Twitter, $k=152$} & {\footnotesize{}48.9} & {\footnotesize{}5907} & {\footnotesize{}1278} & {\footnotesize{}7759} & {\footnotesize{}8.4 (left) }\tabularnewline
\hline 
{\footnotesize{}E2006, $k=1000$} & {\footnotesize{}100} & {\footnotesize{}14694} & {\footnotesize{}140} & {\footnotesize{}601} & {\footnotesize{}150 (two sided)}\tabularnewline
\hline 
{\footnotesize{}E2006, $k=2000$} & {\footnotesize{}270} & {\footnotesize{}FAIL} & {\footnotesize{}320} & {\footnotesize{}791} & {\footnotesize{}815 (two sided)}\tabularnewline
\hline 
{\footnotesize{}Gisette, $k=400$} & {\footnotesize{}2.0} & {\footnotesize{}497} & {\footnotesize{}7.1} & {\footnotesize{}30.3} & {\footnotesize{}0.2 (left)}\tabularnewline
\hline 
{\footnotesize{}Gisette, $k=1000$} & {\footnotesize{}9.3} & {\footnotesize{}FAIL} & {\footnotesize{}9.3} & {\footnotesize{}39.4} & {\footnotesize{}0.9 (left)}\tabularnewline
\hline 
\end{tabular}
\end{table}

\section{Conclusions and Future work }

In this paper, we studied the use of sketching to accelerate the solution
of PCR and PCP. In particular, for a data matrix $\matA$, we relate
the PCR/PCP solution of $\matA\matR$, where $\matR$ is any dimensionality
reduction matrix, to the PCR/PCP solution of $\matA$. We presented
a notion of approximate PCR/PCP, motivated both from an optimization
perspective and from a statistical perspective, and provide conditions
on $\matR$ that guarantee rigorous theoretical bounds. We then leverage
the aforementioned results to design fast, sketching based, algorithms
for approximate PCR/PCP, and demonstrate empirically the utility of
our proposed algorithms. Throughout, our focus in this paper has been
on algorithms that use the ``sketch-and-solve'' approach. 

There are multiple ways in which the current work can be extended,
and the theoretical results improved. We have presented two notions
of approximation: approximate PCR and approximate PCP. Our results
for approximate PCR use only dimensionality reduction matrices $\matR$
whose number of columns is equal to the target rank. It is natural
to conjecture that the use of dimensionality reduction matrices with
an higher number of columns will lead to stronger PCR bounds, but
we prove only PCP bounds. The underlying reason is that our bounds
for PCR are based on analyzing the distance between the column space
of $\matR$ and the column space of $\matV_{\matA,k}$ . However,
once the number of columns in $\matR$ is different from the number
of columns in $\matV_{\matA,k}$, the definition of $d_{2}(\matR,\matV_{\matA,k})$
is no longer applicable. One possible strategy for analyzing PCR when
$\matR$ has more than $k$ columns might be to use a generalization
of the distance between two subspaces that allows subspaces of different
size; see \cite{ye2016schubert} for such generalizations. Another
crucial component will then be to generalize the Davis-Kahan theorem
to bound such distances. We conjecture it is possible to derive algorithms
that depend on gaps between $\sigma_{k}$ and $\sigma_{k+l}$, where
$l$ is some oversampling parameter, as opposed to the smaller gap
between $\sigma_{k}$ and $\sigma_{k+1}$. We leave this for future
work.

Another interesting direction is in finding other ways to identify
a valid approximate dominant subspace. If we consider the statistical
perspective and inspect Eq.~\ref{eq:bias-var}, we see that all we
need is to find a subspace ${\cal S}\subseteq\range{\matA}$ of rank
$k$ such that $\FNorm{(\matI-\matP_{{\cal S}})\matA}$ is small,
while our theoretical results try to achieve a stronger bound: having
the dominant subspaces align. One possible way for finding such a
$\calS$ is using so-called Projection-cost Preserving Sketches~\cite{CohenEMMP15}.
We leave this for future work.

\subsection*{Acknowledgments.}

This research was supported by the Israel Science Foundation (grant
no. 1272/17) and by an IBM Faculty Award.

\bibliographystyle{plain}
\bibliography{kpcr}

\appendix

\section{Bias-Variance Decomposition for ${\cal E}(\protect\x_{\protect\matR})$}

The following appears, without proof, in~\cite{Slawski17}. For completeness,
we include a proof.
\begin{claim}
\label{claim:excess_risk_r-1}The excess risk of $\x_{\matR}$ can
be bounded as follows: 
\[
{\cal E}(\x_{\matR})=\underset{{\cal B}(\x_{\matR})}{\underbrace{\frac{1}{n}\TNormS{\left(\matI-\matP_{\matA\matR}\right)\matA\x^{\star}}}}+\underset{{\cal V}(\x_{\matR})}{\underbrace{\sigma^{2}\frac{\rank{\matA\matR}}{n}}}\,.
\]
\end{claim}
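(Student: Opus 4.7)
The plan is to start by rewriting the prediction $\matA\x_{\matR}$ in closed form and then decompose the squared prediction error into a deterministic (bias) part and a stochastic (variance) part that will be shown to be uncorrelated. Since $\x_{\matR}=\matR(\matA\matR)^{\pinv}\b$, we have $\matA\x_{\matR}=\matA\matR(\matA\matR)^{\pinv}\b=\matP_{\matA\matR}\b$. Writing $\b=\f+\xi$ with $\Expect{\xi}=0$ and $\Expect{\xi\xi^{\T}}=\sigma^{2}\matI_{n}$, and using $\matA\x^{\star}=\matP_{\matA}\f$, the error we want to analyze is $\matA\x_{\matR}-\matA\x^{\star}=\matP_{\matA\matR}\f+\matP_{\matA\matR}\xi-\matP_{\matA}\f$.

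First I would simplify the deterministic difference $\matP_{\matA\matR}\f-\matP_{\matA}\f$. Since $\range{\matA\matR}\subseteq\range{\matA}$, the property of nested projections stated in the preliminaries gives $\matP_{\matA\matR}\matP_{\matA}=\matP_{\matA\matR}$, so $\matP_{\matA\matR}\f=\matP_{\matA\matR}\matP_{\matA}\f=\matP_{\matA\matR}\matA\x^{\star}$. Therefore $\matP_{\matA\matR}\f-\matP_{\matA}\f=-(\matI-\matP_{\matA\matR})\matA\x^{\star}$, and the whole error becomes
\[
\matA\x_{\matR}-\matA\x^{\star}=-(\matI-\matP_{\matA\matR})\matA\x^{\star}+\matP_{\matA\matR}\xi.
\]

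Next I would square this and take expectation. Expanding the squared norm yields three terms: the deterministic square $\TNormS{(\matI-\matP_{\matA\matR})\matA\x^{\star}}$, the stochastic square $\TNormS{\matP_{\matA\matR}\xi}$, and a cross term proportional to $((\matI-\matP_{\matA\matR})\matA\x^{\star})^{\T}\matP_{\matA\matR}\xi$. The cross term vanishes in expectation both because $\Expect{\xi}=0$ and, more structurally, because $(\matI-\matP_{\matA\matR})\matP_{\matA\matR}=0$, so it is even zero deterministically. For the stochastic square I would use the trace trick: $\Expect{\TNormS{\matP_{\matA\matR}\xi}}=\Expect{\Trace{\matP_{\matA\matR}\xi\xi^{\T}}}=\sigma^{2}\Trace{\matP_{\matA\matR}}=\sigma^{2}\rank{\matA\matR}$, where the last equality uses the fact that the trace of an orthogonal projection equals its rank.

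Dividing by $n$ yields the claimed bias–variance decomposition. There is no real obstacle here; the only subtlety worth stating explicitly is that $\matP_{\matA\matR}\f=\matP_{\matA\matR}\matA\x^{\star}$ even though $\f$ itself need not lie in $\range{\matA}$, which is exactly where the nested-projection identity $\matP_{\matA\matR}\matP_{\matA}=\matP_{\matA\matR}$ is used.
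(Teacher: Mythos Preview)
Your proposal is correct and follows essentially the same route as the paper's proof: both write $\matA\x_{\matR}=\matP_{\matA\matR}\b$ and $\matA\x^{\star}=\matP_{\matA}\f$, use the nested-projection identity $\matP_{\matA\matR}\matP_{\matA}=\matP_{\matA\matR}$ to reduce the deterministic part to $(\matI-\matP_{\matA\matR})\matA\x^{\star}$, drop the cross term, and evaluate the noise term via the trace identity. Your additional remark that the cross term vanishes deterministically (because $(\matI-\matP_{\matA\matR})\matP_{\matA\matR}=0$) is a nice observation that the paper does not make explicit, but it does not change the argument.
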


\begin{proof}
The column space of $\matA\matR$ is contained in the column space
of $\matA$, so we have $\matP_{\matA\matR}=\matP_{\matA\matR}\matP_{\matA}$.
We now observe,
\begin{eqnarray*}
{\cal E}(\x_{\matR}) & = & \frac{1}{n}\Expect{\TNormS{\matA\x_{\matR}-\matA\x^{\star}}}\\
 & = & \frac{1}{n}\Expect{\TNormS{\matP_{\matA\matR}\b-\matP_{\matA}\f}}\\
 & = & \frac{1}{n}\Expect{\TNormS{\matP_{\matA\matR}\f-\matP_{\matA}\f}}+\frac{1}{n}\Expect{\TNormS{\matP_{\matA\matR}\xi}}\\
 & = & \frac{1}{n}\Expect{\TNormS{\matP_{\matA\matR}\matP_{\matA}\f-\matP_{\matA}\f}}+\sigma^{2}\frac{\rank{\matA\matR}}{n}\\
 & = & \frac{1}{n}\TNormS{\left(\matI-\matP_{\matA\matR}\right)\matA\x^{\star}}+\sigma^{2}\frac{\rank{\matA\matR}}{n}
\end{eqnarray*}
where in the third line we used the fact that expected value of $\xi$
is $0$, and in the fourth line we used that fact that for any matrix
$\mat M$ and random vector $\y$ with independent entries with 0
mean and $\sigma^{2}$ variance we have $\Expect{\y^{\T}\matM\y}=\Trace{\matM}$.
\end{proof}

\end{document}